\newtheorem{theorem}{Theorem}[section]
\newtheorem{proposition}[theorem]{Proposition}
\newtheorem{lemma}[theorem]{Lemma}
\newtheorem{definition}[theorem]{Definition}
\newtheorem{remark}[theorem]{Remark} 
\numberwithin{equation}{section} 
\numberwithin{figure}{section}  
\newcommand{\myfootnote}[1]{
    \renewcommand{\thefootnote}{}
    \footnotetext{\hspace{-16.5pt}\scriptsize#1}
    \renewcommand{\thefootnote}{\arabic{footnote}}
}
\newcommand \la \langle
\newcommand \ra \rangle
\newcommand \Ecal {\mathcal{E}}
\newcommand \trianglerightNEW \triangleright
\newcommand \auth {\textsc} 
\newcommand \CC {\mathbb C}
\newcommand \bei {\begin{itemize}}
\newcommand \eei {\end{itemize}}
\newcommand \be {\begin{equation}}
\newcommand \bel {\be\label}
\newcommand \ee {\end{equation}}
\newcommand \del \partial
\newcommand \RR {\mathbb R}
\newcommand \eps \epsilon 
\newcommand \Scal{\mathcal{S}}
\let\oldmarginpar\marginpar
\renewcommand\marginpar[1]{\-\oldmarginpar[\raggedleft\footnotesize #1]%
{\raggedright\footnotesize #1}}
\begin{document}

\title{\bf \Large 
Global Existence and Scattering of the Klein-Gordon-Zakharov System in Two Space Dimensions}

\author{Shijie Dong${}^{\,\text{1}}$, 
Yue Ma${}^{\,\text{2}, \ast}$,
}


\date{\today}
\maketitle

\begin{abstract} 
{
We are interested in the Klein-Gordon-Zakharov system in $\mathbb{R}^{1+2}$, which is an important model in plasma physics with extensive mathematical studies. The system can be regarded as semilinear coupled wave and Klein-Gordon equations with nonlinearities violating the null conditions. Without the compactness assumptions on the initial data, we aim to establish the existence of small global solutions, and in addition, we want to illustrate the optimal pointwise decay of the solutions. Furthermore, we show that the Klein-Gordon part of the system enjoys linear scattering while the wave part has uniformly bounded low-order energy.
None of these goals is easy because of the slow pointwise decay nature of the linear wave and Klein-Gordon components in $\mathbb{R}^{1+2}$.
We tackle the difficulties by carefully exploiting the properties of the wave and the Klein-Gordon components, and by relying on the ghost weight energy estimates to close higher-order energy estimates. This appears to be the first pointwise decay result and the first scattering result for the Klein-Gordon-Zakharov system in $\RR^{1+2}$ without compactness assumptions.
}
\end{abstract}
{\sl Keywords.} Klein-Gordon-Zakharov system; pointwise decay; linear scattering.

\tableofcontents

\myfootnote{
${}^\ast$Corresponding author.\\
${}^\text{1}$ Fudan University, School of Mathematical Sciences, 220 Handan Road, Shanghai, 200433, P.R. China. 
Email: shijiedong1991@hotmail.com\\
${}^\text{2}$ Xi’an Jiaotong University, School of Mathematics and Statistics, 28 West Xianning Road, Xi’an, Shaanxi 710049, P.R. China. 
Email: yuemath@xjtu.edu.cn.
\\
{\sl AMS :} 35L05. 
}

\section{Introduction}

\paragraph{Model problem and main results}

We consider the Klein-Gordon-Zakharov model in $\RR^{1+2}$, which is an important model in plasma physics with extensive mathematical studies. The model equations are as follows
\bel{eq:model-KGZ}
\aligned
&-\Box E + E = - n E,
\\
&-\Box n = \Delta |E|^2.
\endaligned
\ee
The unknowns include the electronic field $E = (E^1, E^2)$ taking values in\footnote{Originally $E$ takes values in $\RR^2$, but more general cases of taking values in $\CC^{N_0}$ with $N_0 = 1, 2, \cdots$ can also be treated.} $\RR^2$, and the ion density $n$ taking values in $\RR$. The Klein-Gordon-Zakharov equations can be regarded as a semilinear coupled wave and Klein-Gordon system, with Klein-Gordon field $E$ and wave field $n$.
In the spacetime $\RR^{1+2}$ we adopt the signature $(-, +, +)$. The wave operator is denoted by $\Box = \del_\alpha \del^\alpha$, and $\Delta = \del_a \del^a$ represents the Laplace operator. Throughout Greek letters $\alpha, \beta, \cdots \in \{0, 1, 2\}$ denote spacetime indices, while Latin letters $a, b, \cdots \in \{1, 2\}$ are used to represent space indices. 
The Einstein summation convention is adopted unless otherwise specified.


We consider the Cauchy problem associated to \eqref{eq:model-KGZ} with initial data on the slice $t=t_0 =0$
\bel{eq:model-ID}
\aligned
\big( E, \del_t E \big) (t_0, \cdot)
=
\big( E_0, E_1 \big),
\qquad
\big( n, \del_t n \big) (t_0, \cdot)
=
\big( n_0, n_1 \big) := \big( \Delta n^{\Delta}_0, \Delta n^{\Delta}_1 \big), 
\endaligned
\ee
and the functions $(E_0, E_1, n^\Delta_0, n^\Delta_1)$ are assumed to be sufficiently smooth, but they {\sl do not} need to be compactly supported. The main objective of the present article is the following asymptotic stability result associated to small regular initial data together with the scattering property on the Klein-Gordon components, i.e., the Langmuir wave (stated in the next Theorem).

\begin{theorem}\label{thm:main1}
Consider the Klein-Gordon-Zakharov system in \eqref{eq:model-KGZ}, and let $N \geq 14$ be a large integer. There exits $\eps_0 >0$, such that for all initial data satisfying the smallness condition 
\be 
\aligned
&\sum_{0\leq j\leq N+2}\| \la |x| \ra^{j+1} \log (1+\la |x| \ra) E_0 \|_{H^j} + \sum_{0\leq j\leq N+1} \| \la |x| \ra^{j+2} \log (1+\la |x| \ra) E_1 \|_{H^j}
\\
+
& \sum_{0\leq j\leq N+3} \| \la |x| \ra^{j+1}  n^\Delta_0 \|_{H^j} + \sum_{0\leq j\leq N+2} \| \la |x| \ra^{j+2} n^\Delta_1 \|_{H^j}
\leq \eps < \eps_0,
\endaligned
\ee
the initial value problem \eqref{eq:model-KGZ}--\eqref{eq:model-ID} admits a global solution $(E, n)$, 
which enjoys the following optimal pointwise decay results
\bel{eq:thm-decay}
|E(t, x)| \lesssim t^{-1},
\quad
|n(t, x) | \lesssim t^{-1/2} \langle t-r \rangle^{-1/2}.
\ee

\end{theorem}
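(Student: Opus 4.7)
The plan is a bootstrap argument that runs high-order weighted energy estimates in parallel with pointwise decay estimates of Klainerman-Sobolev type, exploiting a wave-equation reformulation of the $n$-equation. The first step is to remove the loss of a derivative from the right-hand side of the $n$-equation by writing $n=\Delta v$, where $v$ solves the scalar wave equation $-\Box v=|E|^2$ with data $(n_0^\Delta,n_1^\Delta)$. This trade converts the semilinear factor $\Delta|E|^2$ into $|E|^2$ (so the nonlinearity is gained in decay, not in regularity), and the pointwise bound on $n$ becomes a bound on two spatial derivatives of $v$; the extra powers of $\la x\ra$ on the $n^\Delta$-data in the hypothesis are exactly what feeds the weighted wave energies for $v$.

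Next I would set up the bootstrap. For a time interval $[0,T^*)$ I assume (with a large constant $C_1$) that (i) the highest-order ghost-weight energies $\sum_{|I|\le N}\mathcal E_{gw}(Z^IE,t)+\mathcal E_{gw}(Z^Iv,t)\le (C_1\eps)^2(1+t)^{2\delta}$ for some small $\delta$, where $Z$ runs over the admissible Klainerman fields $\partial,L_a,\Omega$ (no scaling, since the equations are not scale-invariant), and (ii) sharper conformal-weighted energies at intermediate orders which will feed the pointwise step. From these I would derive the pointwise bounds $|Z^IE|\lesssim C_1\eps\,t^{-1}$ for $|I|\le N-3$ and $|\Delta Z^Iv|\lesssim C_1\eps\,t^{-1/2}\la t-r\ra^{-1/2}$ for $|I|\le N-5$ via the Klainerman-Sobolev inequality for Klein-Gordon and for wave (with the extra $\la t-r\ra^{1/2}$ gained from a tangential derivative of $v$ and the usual trick that $\Delta$ contains two good derivatives modulo one radial-$\partial_r$). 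The scattering/uniform energy claim for $n$ will follow from the fact that at low orders the $E$-energy controls the source $|E|^2$ in $L^2_tL^2_x$ with integrable time weight.

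To close the bootstrap I would estimate the commutators $[Z^I,\Box-1]E$ and $[Z^I,\Box]v$. The wave source $|E|^2$ is handled straightforwardly because $|E|\lesssim t^{-1}$ is integrable in $2$D after multiplication by the ghost weight; the $t^{-1}$ rate is critical but the ghost weight of Alinhac produces precisely the missing $\la t-r\ra^{-1/2}$ that renders the space-time integral convergent. The genuine difficulty, and the main obstacle, is the cubic-looking term $nE=\Delta v\cdot E$ in the $E$-equation: both factors decay slowly, and $\Delta v$ only has the wave rate $t^{-1/2}\la t-r\ra^{-1/2}$. To manage this one has to split $\Delta$ into tangential Klainerman derivatives plus a $\partial_r^2$ piece, absorb the $\partial_r^2 v$ piece using the equation $-\Box v=|E|^2$ (so $\partial_r^2 v\sim \partial_t^2 v+|E|^2$ modulo better terms), and then exploit the Klein-Gordon rate on $E$ together with the ghost weight to gain the missing integrability; the logarithmic weight on the $E$-data is there to absorb a $\log t$ loss that appears at the very top order.

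Finally, the pointwise conclusions \eqref{eq:thm-decay} are read off from the closed bootstrap via the Klainerman-Sobolev inequality (KG version for $E$, wave version for $\Delta v$), and linear scattering for $E$ follows by showing that $(\Box-1)E=-nE$ is integrable in the natural scattering norm on $[0,\infty)$ using the just-proven decays $|n|\lesssim t^{-1/2}\la t-r\ra^{-1/2}$ and $|E|\lesssim t^{-1}$, so that $e^{-it\la D\ra}(E(t),\partial_t E(t))$ is Cauchy in the energy space as $t\to\infty$.
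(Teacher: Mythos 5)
Your proposal captures several of the right ingredients --- the reformulation $n=\Delta v$ with $-\Box v=|E|^2$, the use of ghost-weight energies, and the use of a Klainerman--Sobolev inequality without the scaling field --- but there are two genuine gaps that would block the argument as written.

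First, the proposal is a plain bootstrap. The Klainerman--Sobolev inequality available in this setting (Proposition~\ref{prop:K-S}) is the Klainerman--Hörmander inequality \emph{without} the scaling vector field; it controls $|u(t,x)|$ by a supremum over $0\le s\le 2t$. That means a pointwise bound at time $t$ needs $L^2$ information on the solution out to time $2t$, i.e.\ \emph{future} information. A bootstrap run forward in time from $[0,T)$ therefore cannot feed the pointwise bounds back into the energy estimates without circularity. This is precisely why the paper replaces the bootstrap with a contraction mapping on a solution space whose norm \eqref{eq:X-norm} encodes all the weighted energies and pointwise bounds globally in time. If you insist on a bootstrap, you would need a different decay mechanism that is local in $t$ (e.g.\ the hyperboloidal foliation, which is what forces compactly supported data in the earlier works you would be trying to supersede).

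Second, and more substantively, the decay structure you propose is not strong enough to close the estimate for the $nE$ nonlinearity, and the mechanism you propose for the wave decay is not available. You claim $|\Delta Z^I v|\lesssim t^{-1/2}\la t-r\ra^{-1/2}$ ``via Klainerman--Sobolev for the wave,'' but the version without scaling (Proposition~\ref{prop:K-S}) gives \emph{no} $\la t-r\ra$ gain at all --- only $\la t+r\ra^{-1/2}$. The paper obtains the crucial $\la t-r\ra$ gain on $n=\Delta n^\Delta$ not from Sobolev embedding but from the Hessian structure (Propositions~\ref{prop:wave-extra} and \ref{prop:wave-extra2}): decomposing $-\Box$ in terms of $\del_t$, $L_a$ turns $\del\del w$ into $\la t-r\ra^{-1}$ times first-order quantities plus $\tfrac{t}{\la t-r\ra}F_w$. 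Crucially, the paper then pairs this $\la t-r\ra$-gain on $n$ with the \emph{extra} $\la t-r\ra/\la t+r\ra$ decay of the Klein--Gordon component $E$ near the light cone (Proposition~\ref{prop:KG-extra}), so that the troublesome $\la t-r\ra$ weight in the $nE$ source is converted into $\la t+r\ra$ --- which is integrable. Your plan (absorbing $\partial_r^2v$ via the equation) is close in spirit to the Hessian trick for $n$, but you never invoke the Klein--Gordon near-cone gain, and without it the $nE$ integral is logarithmically divergent even after the ghost-weight gain. The ``conformal-weighted energies at intermediate orders'' you allude to are not used in the paper and are not obviously compatible with the Klein--Gordon equation in this framework; what the paper actually runs at intermediate orders are exterior-region weighted energies (Propositions~\ref{prop:wave-extra2}, \ref{prop:wave-extra3}, \ref{prop:KG-extra2}) and the $\la t-r\ra$-improved Hessian bounds, organised region-by-region through the cut-off $\chi$.

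Your observation that the logarithmic weight on the $E$-data absorbs a log-loss is in the right spirit: it actually enters through Georgiev's decay estimate for Klein--Gordon (Propositions~\ref{prop:G} and~\ref{prop:G1}), which sums a Littlewood--Paley decomposition and needs a $\log\la|x|\ra$-weighted norm of the data. The scattering conclusion you sketch at the end is essentially correct and matches the paper's Lemma~\ref{lem:scatter}: integrability in $t$ of $\|nE\|_{H^{N-8}}$ gives scattering by a Duhamel/Cook argument.
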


The global existence for the Klein-Gordon-Zakharov system (with some first order equations) in two space dimensios was proved in \cite{Guo}, but whether the system is stable is unknown. Our result in Theorem \ref{thm:main1} verifies that the system is not only stable but also asymptotically stable. 

A similar version of Theorem \ref{thm:main1} was demonstrated in \cite{Dong2006, Ma2008} with compactly supported initial data. Now we can treat the non-compactly supported initial with decay at infinity.
At this point, we recall the global existence result \cite{Stingo18} regarding a quasilinear wave-Klein-Gordon model satisfying the null condition in two space dimensions, where the weights are lower than our result in Theorem \ref{thm:main1}.

Our next result states that the Klein-Gordon field $E$ scatters linearly.

\begin{theorem}\label{thm:scatter}
Let the same assumptions in Theorem \ref{thm:main1} hold, then there exists a pair of functions
$$
(E^+_0, E^+_1) \in H^{N-7} \times H^{N-8},
$$
such that 
\bel{eq:thm-scatter}
\|(E-E^+)(t, \cdot) \|_{H^{N-7}} + \| \del_t (E-E^+)(t, \cdot) \|_{H^{N-8}}
\leq C \la t \ra^{-1/4}
\to 0,
\qquad
\text{as } t \to +\infty,
\ee
in which $E^+$ is a linear Klein-Gordon component solving
$$
\aligned
&-\Box E^+ + E^+ = 0,
\\
&\big( E^+, \del_t E^+ \big)(t_0) = (E^+_0, E^+_1).
\endaligned
$$

\end{theorem}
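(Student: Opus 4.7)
The plan is a standard Cook-type scattering argument. Let $S(t)$ denote the free Klein-Gordon propagator on $\RR^{1+2}$, so that $S(t)$ and $S(-t)$ are uniformly (in $t$) bounded on $H^k\times H^{k-1}$ for each $k$ (in fact isometries for the natural Klein-Gordon energy norm at that regularity). Define the ``asymptotic state at time $s$''
\be
\Phi(s) := S(-s)\bigl(E(s,\cdot),\del_t E(s,\cdot)\bigr)\in H^{N-7}\times H^{N-8}.
\ee
The aim is to show that $\Phi(s)$ is Cauchy in $H^{N-7}\times H^{N-8}$ as $s\to+\infty$; its limit is then declared to be $(E^+_0, E^+_1)$, and setting $E^+(t):=S(t)(E^+_0, E^+_1)$ gives a free Klein-Gordon solution for which $(E-E^+)(t,\cdot)=S(t)\bigl(\Phi(t)-(E^+_0, E^+_1)\bigr)$, so that \eqref{eq:thm-scatter} reduces at once to a quantitative Cauchy bound on $\Phi$.

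From the equation $-\Box E+E=-nE$ and Duhamel one has
\be
\Phi(t_2)-\Phi(t_1)=-\int_{t_1}^{t_2}S(-s)\bigl(0,\,(nE)(s,\cdot)\bigr)\,ds,
\ee
and the boundedness of $S(-s)$ reduces everything to the master inequality
\be
\|\Phi(t_2)-\Phi(t_1)\|_{H^{N-7}\times H^{N-8}}\;\lesssim\;\int_{t_1}^{t_2}\|(nE)(s,\cdot)\|_{H^{N-7}}\,ds.
\ee
If we can dominate the integrand by $C\eps\la s\ra^{-5/4}$, the right-hand side is $\lesssim \la t_1\ra^{-1/4}$, giving simultaneously the existence of $(E^+_0, E^+_1)$ and the quantitative rate in \eqref{eq:thm-scatter}.

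The core step is therefore the bilinear estimate
\be
\|(nE)(s,\cdot)\|_{H^{N-7}}\;\lesssim\;\eps\,\la s\ra^{-5/4}.
\ee
I would Leibniz-decompose into terms $\|(\del^I n)(\del^J E)\|_{L^2}$ with $|I|+|J|\leq N-7$, put the higher-order factor in $L^2$ and the lower-order factor in $L^\infty$, and then feed in on one hand the pointwise decay \eqref{eq:thm-decay} and its derivative-hit refinements (combining Klainerman--Sobolev on the Klein-Gordon side with the wave-type $s^{-1/2}\la s-r\ra^{-1/2}$ decay on the $n$ side), and on the other hand the high-order energy bounds on $E$ and $n$ already available from the proof of Theorem \ref{thm:main1}. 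The main obstacle will be that the crude bound $\|n\|_{L^\infty}\lesssim s^{-1/2}$ is \emph{not} time-integrable; to reach the advertised $\la s\ra^{-5/4}$ one must exploit the $\la s-r\ra^{-1/2}$ weight sitting inside the pointwise bound of $n$. Using that $E$ is effectively supported in $|x|\leq s$ with pointwise decay $s^{-1}$, a weighted $L^2$ estimate absorbs $\la s-r\ra^{-1/2}$ against the ghost-weight-controlled quantities for $E$ produced during the proof of Theorem \ref{thm:main1}, yielding the missing $\la s\ra^{-1/4}$ gain. The slight drop from $H^N$ to $H^{N-7}$ is the usual cost of converting energy bounds into $L^\infty$ pointwise decay via Klainerman--Sobolev in $\RR^{1+2}$, and leaves enough regularity to close the bilinear step.
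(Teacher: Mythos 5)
Your proposal is essentially the paper's approach: the paper proves exactly this Cook-type semigroup argument as Lemma~\ref{lem:scatter} (Appendix~\ref{sec:appendix}), reducing the theorem to time-integrability of $\|nE\|_{H^{N_1}}$, and then verifies the bilinear decay bound by a weighted $L^\infty\times L^2$ pairing. Two small corrections: the integrand should be $\|nE\|_{H^{N-8}}$ (not $H^{N-7}$), since $S(-s)(0,Q)$ is controlled in $H^{k+1}\times H^k$ by $\|Q\|_{H^k}$ and the target space is $H^{N-7}\times H^{N-8}$; and, since the initial data are \emph{not} compactly supported, $E$ is not ``supported in $|x|\leq s$'' --- the correct mechanism, which the paper makes precise, is to pair the weight $\tfrac{\la t-r\ra}{\la t+r\ra}$ on $n$ in $L^\infty$ (Lemma~\ref{lem:est-00a}, giving $\la t\ra^{-3/2+\delta}$) with the weight $\tfrac{\la t+r\ra}{\la t-r\ra}$ on $E$ in $L^2$ (third line of the $X$-norm \eqref{eq:X-norm}, giving $\la t\ra^{\delta}$), so that no compact-support heuristic is needed.
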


We want to emphasize that the scattering result in Theorem \ref{thm:scatter} is valid under quite high regularity assumptions on the initial data. As a comparison, we recall that the scattering result of the Zakharov equations in $\RR^{1+3}$ in \cite{Hani} is also obtained with high regularity assumptions on the initial data. Our scattering result is different from the one proved with (radial) initial data in low regularity for model \eqref{eq:model-KGZ} in \cite{Guo-N-W} in $\RR^{1+3}$, where very different difficulties arise. See in detail below.

We note that our method cannot assert whether the wave part $n$ scatters linearly or not in the energy space (i.e., $\sum_{a=1,2}\|\del_a n\| + \|\del_t n\|$), and we leave it open. However, we will show that the energy of the wave component $n$ is uniformly bounded in time (see \eqref{eq:X-norm}), which is necessary to linear scattering.

There exist already several global existence results for two dimensional coupled wave and Klein-Gordon equations with different types of nonlinearities, but most of the results were shown under the assumption that the initial data are compactly supported; see \cite{Ma19, DW2105} and the references therein for such cases. The ideas and techniques used in proving Theorems \ref{thm:main1}--\ref{thm:scatter} are expected to have further applications, such as to remove the compactness assumptions on the existing results, or to study coupled wave and Klein-Gordon systems with more general nonlinearities of physical or mathematical interests.

\paragraph{Background and historical notes}
The Klein-Gordon-Zakharov system was originally introduced in \cite{{Zakharov}}, which describes the interaction between Langmuir waves and ion sound waves in plasma; see \cite{Dendy} for more of its physical background. The global existence as well as the pointwise decay result on this system in $\RR^{1+3}$ was established  dating back to \cite{OTT}, and then in many other context (see for instance the recent work \cite{Dong2101}). However, due to the insufficiency of the decay in lower dimension (see in detail below), the global existence problem in $\RR^{1+2}$ is somewhat more challenging. In \cite{Dong2006} a global existence result, with pointwise asymptotics of the solution, is established on localized restricted initial data and then it is generalized in \cite{Duan-Ma}. These results are established within the so-called hyperboloidal foliation framework and thus demand that the initial data being compactly supported. In the present work we rely on a global iteration framework, which was used for instance in \cite{Dong2006}, to remove this restriction.

We next recall some mathematical studies in plasma physics which are relevant to our results. Being a highly important model, the Klein-Gordon-Zakharov system can be derived (with certain assumptions) from Euler-Maxwell system, which can be found in \cite[Section 2.1]{Colin}.
The Euler-Maxwell model is one of the most fundamental models in plasma physics, which describes laser-plasma interactions.
In the seminal work of Guo-Ionescu-Pausader \cite{Guo-I-P}, the two-fluid Euler-Maxwell model was shown to admit smooth solutions in $\RR^{1+3}$. We recall that the Euler-Poisson system in $\RR^{1+2}$ was proved to have global solutions by Li-Wu \cite{Li-Wu14} and Ionescu-Pausader \cite{Ionescu-P4}, and later on, the one-fluid Euler-Maxwell system in $\RR^{1+2}$ was proved to have global solutions by Deng-Ionescu \cite{Deng-I-P}, and both the systems can be reduced to Klein-Gordon equations. 

Back to the Klein-Gordon-Zakharov context,
besides the global existence and pointwise asymptotics results, there is also plenty of work concerning other problems around this system. In $\RR^{1+3}$, Ozawa, Tsutaya, and Tsutsumi \cite{OTT2} showed the Klein-Gordon-Zakharov equations admit global solutions for low regularity initial data under the condition that the propagation speeds are different in two equations. In a series papers \cite{Masmoudi0, Masmoudi2}, Masmoudi and Nakanishi investigated the limiting system as certain parameters go to $+\infty$ in the Klein-Gordon-Zakharov equations. We also recall the relevant work by Shi and Wang \cite{WangShu}, where a finite time blow-up result was obtained for low regular initial data satisfying certain conditions. Again in $\RR^{1+3}$, the linear scattering of the Klein-Gordon-Zakharov equations in radial case (with low regular initial data) was obtained in the works \cite{Guo-N-W, Guo-N-W2} by Guo-Nakanishi-Wang, while the linear scattering of the Zakharov equations (with high regular initial data) was illustrated by Hani-Pasateri-Shatah \cite{Hani}.

The coupled wave and Klein-Gordon equations in two space dimensions have received much attention, and substantial progress has been made regarding its small data global existence problem in recent years. We are not going be exhaustive here, but instead leading one to the works \cite{Stingo18, Ma19, DW2105} and the references therein for more discussions.

\paragraph{Major difficulties and technical contributions}
Concerning the Klein-Gordon-Zakharov system \eqref{eq:model-KGZ}, there are many difficulties in obtaining the results in Theorems \ref{thm:main1}--\ref{thm:scatter}. Besides the lack of the scaling vector field in the analysis, we demonstrate some key issues encountered in the study of the system \eqref{eq:model-KGZ}. The first one is the absence of null structure in the nonlinearities of the system \eqref{eq:model-KGZ}. We remark that the right-hand-side of the system violates the classical null condition of Christodoulou-Klainerman. The second comes form the low decay rate of both wave and Klein-Gordon equations in lower dimension $\RR^{1+2}$ and the third one is about dealing with the non-compactly supported initial data. Let us explain in detail these obstacles and our strategies aimed at each of them. 

In the research of nonlinear wave systems (including wave-Klein-Gordon systems), the null condition (in the sense of Christodoulou-Klainerman) plays an essential role. Roughly speaking, it provides additional decay near the light cone (i.e., the region $t$ close to $|x|$), where the wave equations fail to have sufficiently fast decay. 
We note the only existing global existence results on two dimensional coupled wave and Klein-Gordon equations with non-cpmpactly supported initial data are due to \cite{Dong2005, Stingo18}, where all of the nonlinearities are assumed to obey the null condition, and thus the situation we consider here is more difficult.
To conquer this difficulty, we observe and take full use of a special structure of the system \eqref{eq:model-KGZ}: in the right-hand-side of the wave equation the only quadratic term is a wave-Klein-Gordon mixed one. The Klein-Gordon components enjoy an additional decay rate expressed as $\la t+r\ra^{-1}\la r-t\ra$ near the light cone (see Proposition \ref{prop:KG-extra} for more details), which will compensate the absence of the null structure in our analysis. 

The insufficiency of decay in the lower dimension brings another difficulty. In $\RR^{1+2}$, the free-linear waves decay at the speed of $t^{-1/2}$, while free-linear Klein-Gordon components decay at the speed of $t^{-1}$. This means that the best we can expect for the nonlinearities is
\bel{eq:decay-problem}
\big\| n E \big\|_{L^2(\RR^2)} 
\lesssim t^{-1},
\qquad
\big\| \Delta |E|^2 \big\|_{L^2(\RR^2)} 
\lesssim t^{-1},
\ee
which are non-integrable with respect to time. Thus under this situation, it is highly non-trivial to prove the sharp pointwise decay results,  as well as closing the bootstrap, of $E$ and $n$.
Our strategy of solving this thorny issue of the slow decay in the $nE$ nonlinearity follows. We first reveal a Hessian structure $\Delta n^\Delta E$ with the relation $n=\Delta n^\Delta$ relying on the special structure in the wave equation of $n$. Then we employ different techniques in different spacetime regions to obtain an extra $\la t-r \ra$ decay of the Hessian form of the wave components; see Propositions \ref{prop:wave-extra} and \ref{prop:wave-extra2}. Finally, combined with the extra $\la t+r\ra^{-1}\la r-t\ra$ decay of the Klein-Gordon component $E$, the extra $\la t-r \ra$ decay of $\Delta n^\Delta$ is transformed into $\la t+r \ra$ near the light cone, which is favorable.

The third difficulty is the most severe one, and it is the main interest of the present article to tackle. When the initial data is compactly supported, the system \eqref{eq:model-KGZ} has been discussed within the hyperboloidal foliation framework, see for example \cite{Dong2006, Ma2008, Duan-Ma}. However, there due to the essential short board of the hyperboloidal foliation, one can not analyze the solution outside of the light cone, and thus, the demand on the compactness of the support of initial data became inevitable. Here we apply another strategy used in \cite{Dong2005, Dong2101} which is entirely different from the hyperboloidal foliation. Equipped with these techniques, one manages to treat the whole spacetime in its entirety. This allows us to remove the restriction on the initial data.
As a comparison, in the proofs of \cite{Dong2005, Dong2101} we close the iteration by relying on a global decay of the Klein-Gordon components. But here we need to investigate more detailed properties of the Klein-Gordon field $E$ in different spacetime regions (see for instance \eqref{eq:X-norm}) so that we can manage to close the proof.

Once the global solution is established for the system \eqref{eq:model-KGZ}, a natural question arises: will the global solution $(E, n)$ scatters to the linear case? This is the second objective of the present work. To show the linear scattering of the Klein-Gordon-Zakharov system \eqref{eq:model-KGZ} (or the Zakharov equations) is a tough problem even in $\RR^{1+3}$. In \cite{Guo-N-W, Guo-N-W2}, the Klein-Gordon-Zakharov system was shown to enjoy the linear scattering for (radial) initial data with low regularity, and later on in \cite{Hani}, the Zakharov system was proved to scatter linearly for initial data with high regularity. All of the works \cite{Guo-N-W, Guo-N-W2, Hani} are proved in $\RR^{1+3}$, and the proofs cannot be directly applied to the two dimensional cases. Based on a scattering result on wave and Dirac equations in \cite{Katayama17, Dong-Li21}, we succeed in showing that the Klein-Gordon part $E$ in the system \eqref{eq:model-KGZ} enjoys linear scattering in its energy space (as illustrated in Theorem \ref{thm:scatter}) by adapting the result in \cite{Katayama17, Dong-Li21} to Klein-Gordon equations. We cannot prove whether the wave part $n$ scatters linearly or not, but instead we show that the natural wave energy of $n$ (i.e., $\sum_{a=1,2} \|\del_a n\| + \|\del_t n\|$) is uniformly bounded in time, which is a weaker result (also a necessary result to linear scattering). As an interesting comparison, we refer to \cite{Ionescu-P2} the case of wave-Klein-Gordon system in $\RR^{1+3}$, where neither the wave components nor the Klein-Gordon one scatters linearly.



\paragraph{Further discussions}

Many fundamental physical models are governed by the coupled wave and Klein-Gordon equations, including the Einstein-Klein-Gordon equations, the Dirac-Klein-Gordon equations, the Klein-Gordon-Zakharov equations studied in the present paper, the Maxwell-Klein-Gordon equations etc, and the asymptotic behavior of the equations is relatively well-studied in three space dimensions. But there are few results for these systems of equations in two space dimensions, and the pointwise asymptotics of the solutions (even for small smooth initial data with compact support) are also unknown except the Klein-Gordon-Zakharov equations \cite{Dong2006, Ma2008, Duan-Ma} and some cases of the Dirac-Klein-Gordon equations \cite{DW2105}. We believe it is of great mathematical and physical significance to show the asymptotic behavior (or some other related results) of such equations in two space dimensions.

We recall that the Klein-Gordon-Zakharov equations were shown to have uniformly bounded (low- and high-order) energy in the recent work \cite{Dong2101} in $\RR^{1+3}$, where the pointwise decay of the solutions is faster compared to the two dimensional case. However in $\RR^{1+2}$, due to the critical decay rates of the nonlinearities as illustrated in \eqref{eq:decay-problem}, we do not expect the high-order energy to be uniformly bounded in time unless some new observations on the system \eqref{eq:model-KGZ} are found.

In the present paper, our focus is to handle non-compactly supported initial data for the Klein-Gordon-Zakharov equations \eqref{eq:model-KGZ} in two space dimensions. We lead one to \cite{PLF-YM-arXiv1, Ionescu-P2, Klainerman-Wang-Yang} for other methods of treating non-compactly supported initial data, which have the potential to lead to new results.



\subsection*{Outline}

The rest of this article is organised as follows.

In Section \ref{sec:pre}, we introduce the preliminaries and some fundamental energy estimates for wave and Klein-Gordon equations. We then explore some extra decay properties for the wave and the Klein-Gordon components in Section \ref{sec:extra-decay}. In Section \ref{sec:global-existence}, we demonstrate the proof of the global existence and the pointwise decay results for the Klein-Gordon-Zakharov equations in Theorem \ref{thm:main1} relying on the contraction mapping theorem. Last, we show the scattering result for the Klein-Gordon component in Theorem \ref{thm:scatter} in Section \ref{sec:scatter} with some supporting materials in Appendix \ref{sec:appendix}.


\section{Preliminaries}
\label{sec:pre}


\subsection{Basic notation}
 
We work in the $(2+1)$ dimensional spacetime with signature $(-, +, +)$, i.e., the Minkowski metric $\eta = \text{diag}\{-1, 1, 1\}$. A point in $\RR^{1+2}$ is denoted by $(x_0, x_1, x_2) = (t, x_1, x_2)$, and its spacial radius is written as $r = \sqrt{x_1^2 + x_2^2}$. We use Latin letters to represent space indices $a, b, \cdots \in \{1, 2\}$, while Greek letters are used to denote spacetime indices $\{0, 1, 2\}$, and the indices are raised or lowered by the metric $\eta$.

We first recall the vector fields which will be frequently used in the analysis.
\bei
\item Translations: $\del_\alpha$, 

\item Rotations: $\Omega_{ab} = x_a \del_b - x_b \del_a$,

\item Lorentz boosts: $L_a = x_a \del_t + t \del_a$, 

\item Scaling vector field: $L_0 = S = t \del_t + r \del_r$.

\eei
Excluding the scaling vector field $L_0$, we utilize $\Gamma$ to denote a general vector field in the set
$$
V := \{ \del_\alpha, \Omega_{ab}, L_a \}.
$$
Besides, the following good derivatives
$$
G_a 
:= r^{-1} \big(x_a \del_t + r \del_a \big),
$$ 
will appear in Alinhac's ghost weight method.

We define (and fix) a smooth  cut-off function which is increasing and satisfies
\begin{eqnarray}\label{eq:cut-off}
\chi(s):=
\left\{
\begin{array}{lll}
0, & \quad s\leq 1,
\\
1, &\quad s \geq 2.
\end{array}
\right.
\end{eqnarray} 
This will be frequently used to derive energy estimates in different spacetime regions.


\subsection{Energy estimates}
We consider the wave-Klein-Gordon equation with $m=0, 1$
\bel{eq:w-KG-m}
\aligned
-\Box u + m^2 u = F_u.
\endaligned
\ee
We will demonstrate several types of energy estimates for the equation \eqref{eq:w-KG-m}. We recall the energy functional (with $\delta>0$)
\bel{eq:gst-functional}
\Ecal_{gst, m} (t, u)
:=
\Ecal_m (t, u) 
+ 
\sum_a \int_{t_0}^t \int_{\RR^2} {\delta|G_a u|^2 \over \langle \tau-r\rangle^{1+\delta}} \, dxd\tau
+
 \int_{t_0}^t \int_{\RR^2} {\delta m^2| u|^2 \over \langle \tau-r\rangle^{1+\delta}} \, dxd\tau,
\ee
in which the natural energy is defined by
$$
\Ecal_m (t, u) 
:=
\int_{\RR^2} \Big( |\del_t u|^2 + \sum_a |\del_a u|^2 + m^2 |u|^2\Big)(t,\cdot) \, dx.
$$
The abbreviations $\Ecal (t, u) = \Ecal_0 (t, u) $ and $\Ecal_{gst} (t, u) = \Ecal_{gst, 0} (t, u) $ will be used. We also remark that when $t=t_0$,  $\Ecal_{gst,m}(t_0,u) = \Ecal_m(t_0,u)$.

The natural energy estimates for wave-Klein-Gordon equations read.
\begin{proposition}
Consider \eqref{eq:w-KG-m} with $m=0, 1$, and it holds both
\bel{eq:natural-wKG1}
\aligned
\Ecal_{ m} (t, u)
\lesssim
\Ecal_{ m} (t_0, u)
+
\int_{t_0}^t \int_{\RR^2} \big| F_u \big| \, \big| \del_t u \big| \, dxd\tau
\endaligned
\ee
and
\bel{eq:natural-wKG2}
\aligned
\Ecal_{m} (t, u)^{1/2}
\lesssim
\Ecal_{m} (t_0, u)^{1/2}
+
\int_{t_0}^t \big\| F_u \big\| \, d\tau.
\endaligned
\ee
\end{proposition}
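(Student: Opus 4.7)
The plan is to run the standard multiplier argument, pairing the equation with $\partial_t u$ and integrating over the strip $[t_0,t]\times\RR^2$. Using the signature $(-,+,+)$, the equation \eqref{eq:w-KG-m} reads $\partial_t^2 u - \Delta u + m^2 u = F_u$. Multiplying by $\partial_t u$ and regrouping the left-hand side via the identities
\[
\partial_t^2 u\,\partial_t u = \tfrac{1}{2}\partial_t|\partial_t u|^2, \qquad
-\Delta u\,\partial_t u = -\partial_a\!\bigl(\partial^a u\,\partial_t u\bigr) + \tfrac{1}{2}\partial_t|\nabla u|^2, \qquad
m^2 u\,\partial_t u = \tfrac{1}{2}m^2\partial_t|u|^2,
\]
yields the pointwise identity
\[
\tfrac{1}{2}\partial_t\bigl(|\partial_t u|^2 + |\nabla u|^2 + m^2|u|^2\bigr) - \partial_a\!\bigl(\partial^a u\,\partial_t u\bigr) = F_u\,\partial_t u.
\]

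Next I would integrate this identity in $x$ over $\RR^2$; the spatial divergence term drops out because the smoothness/decay of $u$ (inherited from the data and finite speed of propagation, or more directly from the a priori assumption $\Ecal_m(\tau,u)<\infty$ on the slices under consideration) makes $\partial^a u\,\partial_t u$ integrable with vanishing flux at spatial infinity. This produces the energy identity
\[
\tfrac{1}{2}\frac{d}{d\tau}\Ecal_m(\tau,u) = \int_{\RR^2} F_u\,\partial_t u\,dx.
\]
Integrating in $\tau$ from $t_0$ to $t$ and bounding the right-hand side by its absolute value gives \eqref{eq:natural-wKG1} directly.

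For \eqref{eq:natural-wKG2} I would apply Cauchy--Schwarz to the right-hand side of the energy identity: $|\int F_u\,\partial_t u\,dx|\le \|F_u\|\,\|\partial_t u\|\le \|F_u\|\,\Ecal_m(\tau,u)^{1/2}$. Combining with the identity yields
\[
\frac{d}{d\tau}\Ecal_m(\tau,u)^{1/2} \le \|F_u(\tau,\cdot)\|,
\]
and integrating from $t_0$ to $t$ produces \eqref{eq:natural-wKG2}.

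There is essentially no serious obstacle here; this is the textbook multiplier estimate, and the only mild subtlety is justifying that the boundary divergence term $\int_{\RR^2}\partial_a(\partial^a u\,\partial_t u)\,dx$ vanishes in the setting of the theorem. Since the present work deals with non-compactly supported data that nevertheless carry polynomial weights in $\langle|x|\rangle$ (see the smallness condition in Theorem \ref{thm:main1}), one can either argue by a standard cut-off/approximation: multiply the identity by a cutoff $\chi(|x|/R)$, integrate, and send $R\to\infty$, using dominated convergence together with the finiteness of $\Ecal_m(\tau,u)$ to kill the cutoff derivative terms. This justification is routine and will be invoked implicitly.
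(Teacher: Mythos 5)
Your argument is correct and is exactly the paper's approach specialized to the trivial parameter choices: the paper derives a single pointwise identity from the multiplier $\langle t\rangle^{-\kappa}e^q\partial_t u$ and then recovers \eqref{eq:natural-wKG1}--\eqref{eq:natural-wKG2} by setting $\delta=0$, $e^q\equiv 1$, $\kappa=0$, which reduces the multiplier to $\partial_t u$ — precisely what you use. Your extraction of \eqref{eq:natural-wKG2} via Cauchy--Schwarz and dividing by $\Ecal_m^{1/2}$, as well as the cutoff justification of the vanishing flux at spatial infinity, match the standard (implicit) steps in the paper.
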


The following energy estimates are due to Alinhac \cite{Alinhac1}, which are referred to as ghost weight energy estimates.
Compared with the natural energy estimates, an additional positive spacetime integral can also be controlled.

\begin{proposition}
Consider \eqref{eq:w-KG-m} with $m=0, 1$, and we have (with $\delta>0$) both
\bel{eq:gst-wKG1}
\aligned
\Ecal_{gst, m} (t, u)
\lesssim
\Ecal_{gst, m} (t_0, u)
+
\int_{t_0}^t \int_{\RR^2} \big| F_u \big| \, \big| \del_t u \big| \, dxd\tau
\endaligned
\ee
and
\bel{eq:gst-wKG2}
\aligned
\Ecal_{gst, m} (t, u)^{1/2}
\lesssim
\Ecal_{gst, m} (t_0, u)^{1/2}
+
\int_{t_0}^t \big\| F_u \big\| \, d\tau.
\endaligned
\ee
\end{proposition}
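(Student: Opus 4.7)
The plan is to derive a pointwise spacetime identity by multiplying \eqref{eq:w-KG-m} by the ghost-weighted multiplier $e^{q(r-t)}\,\del_t u$, where $q\colon\RR\to\RR$ is a bounded increasing function chosen so that $q'(s)\geq c\,\delta\,\la s\ra^{-1-\delta}$ (for instance $q(s)=\int_{-\infty}^s \delta\la\sigma\ra^{-1-\delta}\,d\sigma$). Because $q$ is bounded, $e^{q(r-t)}$ is uniformly comparable to $1$, so every weighted energy below is equivalent to the natural energy $\mE_m$, and the constants implicit in $\lesssim$ will absorb this equivalence.

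For \eqref{eq:gst-wKG1}, I would expand $(-\Box u+m^2 u)\,\del_t u\,e^{q(r-t)}$ using $\del_t e^{q(r-t)}=-q'(r-t)e^{q(r-t)}$ and $\del_a e^{q(r-t)}=(x_a/r)q'(r-t)e^{q(r-t)}$. The terms regroup into a pure time derivative $\tfrac12\del_t\bigl[(|\del_t u|^2+\sum_a|\del_a u|^2+m^2|u|^2)e^{q(r-t)}\bigr]$, a pure spatial divergence $-\del_a(\del_a u\,\del_t u\,e^{q(r-t)})$, and an interior remainder equal to
\[
\frac{q'(r-t)}{2}\Bigl(|\del_t u|^2+\sum_a|\del_a u|^2+m^2|u|^2+2\sum_a\tfrac{x_a}{r}\del_a u\,\del_t u\Bigr)e^{q(r-t)}=\frac{q'(r-t)}{2}\Bigl(\sum_a|G_a u|^2+m^2|u|^2\Bigr)e^{q(r-t)},
\]
where the identity uses $G_a u=(x_a/r)\del_t u+\del_a u$ together with $\sum_a(x_a/r)^2=1$. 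Integrating over $\RR^2\times[t_0,t]$, discarding the spatial divergence by decay at infinity, and using $q'(s)\gtrsim\delta\la s\ra^{-1-\delta}$, one recovers on the left the $\mE_m(t,u)$ boundary term together with the additional positive ghost-weight spacetime integrals defining $\mE_{gst,m}$ in \eqref{eq:gst-functional}, while the right-hand side is bounded by $\int|F_u|\,|\del_t u|\,dx\,d\tau$; this is \eqref{eq:gst-wKG1}.

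For \eqref{eq:gst-wKG2}, I would start from the same identity but estimate the source by Cauchy--Schwarz only in the spatial variable, obtaining $\int_{\RR^2}|F_u||\del_t u|\,dx\lesssim\|F_u\|\,\mE_m(\tau,u)^{1/2}$. Writing $A(t):=\mE_{gst,m}(t,u)$, and using $\mE_m\leq\mE_{gst,m}$, the identity becomes $A(t)\lesssim A(t_0)+\bigl(\sup_{[t_0,t]}A^{1/2}\bigr)\int_{t_0}^t\|F_u\|\,d\tau$. Taking the supremum over $[t_0,t]$ and solving the resulting elementary quadratic inequality in $\sup A^{1/2}$ yields $\sup_{[t_0,t]}A^{1/2}\lesssim A(t_0)^{1/2}+\int_{t_0}^t\|F_u\|\,d\tau$, which is \eqref{eq:gst-wKG2} after noting $\mE_{gst,m}(t_0,u)=\mE_m(t_0,u)$.

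I expect the only substantive obstacle to be the completing-the-square step that converts $|\del_t u|^2+\sum_a|\del_a u|^2+2\sum_a(x_a/r)\del_t u\,\del_a u$ into $\sum_a|G_a u|^2$; once that identity is in hand, the remainder is a standard divergence-identity plus Cauchy--Schwarz argument. A minor preparatory point is the choice of a bounded $q$, which keeps the weighted and unweighted energies uniformly equivalent and prevents the $e^q$ factor from propagating through the final inequalities.
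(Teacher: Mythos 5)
Your proposal matches the paper's own derivation: both use the multiplier $e^{q(r-t)}\del_t u$ with $q(s)=\delta\int_{-\infty}^s\la\sigma\ra^{-1-\delta}\,d\sigma$, expand via the divergence identity and complete the square to produce the positive ghost-weight remainder $\tfrac12 q'(r-t)e^q(\sum_a|G_a u|^2+m^2u^2)$, and then pass from \eqref{eq:gst-wKG1} to \eqref{eq:gst-wKG2} by Cauchy--Schwarz in space plus the elementary quadratic inequality. The only difference is that the paper writes the identity once with an additional factor $\la t\ra^{-\kappa}$ so as to also derive \eqref{eq:gst-wKG3}, and recovers your case by setting $\kappa=0$; this is a cosmetic unification, not a different argument.
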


The following version of ghost weight energy estimates will play a vital role in the proof of the iteration procedure, which was used for instance in \cite{Dong2005}.

\begin{proposition}
Consider \eqref{eq:w-KG-m} with $m=1$, and it holds (with $\delta, \kappa > 0$)
\bel{eq:gst-wKG3}
\int_{t_0}^t \int_{\RR^2} {\kappa| u|^2 \over \langle \tau \rangle^{\kappa} \langle \tau-r\rangle^{1+\delta}} \, dxd\tau
\lesssim
\Ecal_{gst, 1} (t_0, u)
+
\int_{t_0}^t \int_{\RR^2} \la\tau \ra^{-\kappa} \big| F_u \big| \, \big| \del_t u \big| \, dxd\tau.
\ee
\end{proposition}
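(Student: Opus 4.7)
The plan is to run Alinhac's ghost-weight energy method adapted to the Klein-Gordon case, but with the standard multiplier twisted by the time weight $\la\tau\ra^{-\kappa}$. Concretely, I would multiply the equation $\partial_\tau^2 u - \Delta u + u = F_u$ by
$$
M := w \, \partial_\tau u,
\qquad
w := \la\tau\ra^{-\kappa} \, e^{q(r-\tau)},
\qquad
q(s) := \int_{-\infty}^s \la\sigma\ra^{-1-\delta} \, d\sigma,
$$
so that $q$ is bounded, $q'(s) = \la s\ra^{-1-\delta}$, and $w \sim \la\tau\ra^{-\kappa}$.

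The first step is the algebraic identity. Writing $\mathcal{D} := \tfrac12 \bigl( |\partial_\tau u|^2 + \sum_a |\partial_a u|^2 + |u|^2 \bigr)$, the usual manipulations give
$$
M \bigl( \partial_\tau^2 u - \Delta u + u \bigr) = \partial_\tau (w \mathcal{D}) - \partial_a (w \, \partial_\tau u \, \partial_a u) - \mathcal{D} \, \partial_\tau w + \partial_a w \cdot \partial_\tau u \, \partial_a u.
$$
Plugging in $\partial_\tau w = -\kappa\tau\la\tau\ra^{-\kappa-2}e^q - w q'(r-\tau)$ and $\partial_a w = w q'(r-\tau) \omega_a$ (with $\omega_a = x_a/r$) and using the identity
$$
\sum_a |G_a u|^2 = |\partial_\tau u|^2 + \sum_a |\partial_a u|^2 + 2 \omega_a \, \partial_\tau u \, \partial_a u,
\qquad\text{hence}\qquad
\mathcal{D} + \omega_a \partial_\tau u \partial_a u = \tfrac12 \bigl( \textstyle\sum_a|G_a u|^2 + |u|^2 \bigr),
$$
the remainder collapses to the sum of two nonnegative terms
$$
\mathcal{D} \, \kappa \tau \la\tau\ra^{-\kappa-2} e^{q(r-\tau)} + \tfrac12 \, w \, q'(r-\tau) \bigl( \textstyle\sum_a |G_a u|^2 + |u|^2 \bigr).
$$

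The second step is to integrate this identity over $[t_0,t] \times \RR^2$. The spatial divergence vanishes under standard decay-at-infinity hypotheses, the time-derivative term contributes $\int w\mathcal{D} \, dx$ at time $t$ (discarded, as nonnegative) and at time $t_0$ (bounded by $\Ecal_{gst,1}(t_0,u)$ since $w|_{t_0} \lesssim 1$ and $\Ecal_{gst,1}(t_0,u) = \Ecal_1(t_0,u)$), while the right-hand side produces exactly $\int\!\!\int \la\tau\ra^{-\kappa} |F_u| |\partial_t u| \, dx d\tau$. Since $e^q \asymp 1$, this yields
$$
\int_{t_0}^t\!\!\int_{\RR^2} \frac{|u|^2 + \sum_a |G_a u|^2}{\la\tau\ra^\kappa \la\tau-r\ra^{1+\delta}} \, dx d\tau
\,+\,
\int_{t_0}^t\!\!\int_{\RR^2} \frac{\kappa\tau}{\la\tau\ra^{\kappa+2}} \mathcal{D} \, dx d\tau
\;\lesssim\;
\Ecal_{gst,1}(t_0,u) + \int_{t_0}^t\!\!\int_{\RR^2} \la\tau\ra^{-\kappa} |F_u| |\partial_t u| \, dx d\tau,
$$
and dropping everything except the $|u|^2$ piece in the first integral gives the desired inequality.

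The part I expect to require the most care is the $\kappa$ prefactor on the left-hand side. In the identity above the ghost-weight contribution gives the $|u|^2 / (\la\tau\ra^\kappa \la\tau-r\ra^{1+\delta})$ term with a $\delta$-like coefficient rather than $\kappa$; to actually produce the factor $\kappa$ I would rescale $q$ by $\kappa$, i.e., take $q(s) = \kappa \int_{-\infty}^s \la\sigma\ra^{-1-\delta} d\sigma$, which keeps $e^q$ bounded and shifts the coefficient into $q'$. A minor technical point is that the $\kappa\tau\la\tau\ra^{-\kappa-2}\mathcal{D}$ contribution is only effective for $\tau \gtrsim 1$; near $\tau = t_0$ one uses instead that this integrand is already nonnegative and simply discards it. Everything else is routine integration by parts.
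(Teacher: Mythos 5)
Your proposal is correct and follows essentially the same route as the paper: the multiplier $\la\tau\ra^{-\kappa}e^{q}\del_\tau u$, the same algebraic identity splitting off the exact derivatives and leaving a nonnegative ghost-weight remainder, and integration over the slab. The paper's only cosmetic difference is that it keeps the prefactor on $q$ as $\delta$ (so the ghost-weight term comes out with coefficient $\delta/2$) and simply fixes $\kappa=\delta/2$ in the applications, whereas you rescale $q$ by $\kappa$ directly; your version does require the implicit constraint that $\kappa/\delta$ stays bounded so that $e^q\asymp 1$, which is indeed satisfied under the paper's later choice $\kappa=\delta/2$, so no harm is done.
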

In the following application in Section \ref{sec:global-existence} we will take $\kappa = \delta/2$.

The above energy estimates are based on the following identity. Let $q(t,r): = \delta\int_{-\infty}^{r-t} \la s\ra^{-1-\delta}ds$ which is a uniformly bounded positive function. We apply the multiplier $\la t\ra^{-\kappa}e^q\del_tu$ and obtain:
$$
\aligned
\la t\ra^{-\kappa}e^q\del_tu\,\big(-\Box u + m^2u\big) 
=& \frac{1}{2}\del_t\Big(\la t\ra^{-\kappa}e^q\Big(\sum_\alpha |\del_{\alpha}u|^2 + m^2u^2\Big)\Big)
-\del_a\Big(\la t\ra^{\kappa}e^q\, \del_tu\del_au\Big)
\\
&+\frac{\delta}{2}\la t\ra^{-\kappa}e^q\la r-t\ra^{-1-\delta}\Big(\sum_a|G_au|^2 + m^2u^2\Big)
\\
&+ \frac{\kappa}{2}t\la t\ra^{-\kappa-2}e^q\Big(\sum_{\alpha}|\del_{\alpha}u|^2 + m^2u^2\Big).
\endaligned
$$
Then integrating the above identity in $\{0\leq \tau \leq t\}$ with Stokes' formula leads to the desired energy estimates. For \eqref{eq:natural-wKG1} and \eqref{eq:natural-wKG2} we take $\delta=0, e^q \equiv 1$ and $\kappa =0$. For \eqref{eq:gst-wKG1} we take $\delta>0$ and $\kappa =0$. For \eqref{eq:gst-wKG1} we fix $\delta,\kappa>0$.
 

\subsection{Estimates on commutators}

We first recall the well-known relations
$$
\Gamma \Box = \Box \Gamma,
\qquad
\Gamma (\Box -1) = (\Box -1) \Gamma,
$$
besides, we also need more estimates on commutators.
In order to apply the Klainderman-Sobolev type inequality, we need to bound quantities such as $\|\Gamma^I \del_{\alpha}u\|$ by the energies introduced in the last Subsection. For this purpose we first establish the following estimates on commutators.
\begin{lemma}
	For $u$ sufficiently regular, the following bounds hold:
	\begin{equation}\label{eq1-10-10-2021}
	\big|[\del_{\alpha},\Gamma^I] u\big|\leq C(I)\sum_{|J|<|I|}\sum_{\alpha'}|\del_{\alpha'}\Gamma^J u|
	\end{equation}
	\begin{equation}\label{eq2-10-10-2021}
	\big|[\del_{\alpha}\del_{\beta},\Gamma^I]u\big|\leq C(I)\sum_{|J|< |I|}\sum_{\alpha',\beta'}|\del_{\alpha'}\del_{\beta'}\Gamma^Ju|
	\end{equation}
	where $C(I)$ is a constant determined by $I$. When $|I| = 0$ the sums are understood to be zero.
\end{lemma}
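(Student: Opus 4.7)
The plan is induction on $|I|$. For $|I|=0$ both sides are zero by convention and there is nothing to show. For the base case $|I|=1$, I would simply compute $[\del_\alpha,\Gamma]$ for each admissible vector field $\Gamma\in V=\{\del_\beta,\Omega_{bc},L_a\}$. Since these fields have at most linear coefficients in $(t,x)$, each such commutator is a constant-coefficient first-order translation operator:
$$
[\del_\alpha,\del_\beta]=0,\qquad [\del_\alpha,\Omega_{bc}]=\delta_{\alpha b}\del_c-\delta_{\alpha c}\del_b,\qquad [\del_\alpha,L_a]=\delta_{\alpha a}\del_t+\delta_{\alpha 0}\del_a,
$$
where indices outside their natural range produce vanishing contributions. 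This immediately yields \eqref{eq1-10-10-2021} when $|I|=1$.

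For the inductive step, assume \eqref{eq1-10-10-2021} for all multi-indices of length $<|I|$, and decompose $\Gamma^I=\Gamma\,\Gamma^{I'}$ with $|I'|=|I|-1$. The standard commutator identity gives
$$
[\del_\alpha,\Gamma^I]\,u=[\del_\alpha,\Gamma]\,\Gamma^{I'}u+\Gamma\,[\del_\alpha,\Gamma^{I'}]\,u.
$$
The first summand, by the base case, is a linear combination of terms $\del_{\alpha'}\Gamma^{I'}u$, which has the required form with $|J|=|I'|<|I|$. For the second summand, the induction hypothesis bounds $|[\del_\alpha,\Gamma^{I'}]u|$ by a sum of $|\del_{\alpha'}\Gamma^J u|$ with $|J|<|I'|$. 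Applying $\Gamma$ and then using the base case once more to commute $\Gamma$ past $\del_{\alpha'}$, i.e.\ $\Gamma\del_{\alpha'}=\del_{\alpha'}\Gamma+[\Gamma,\del_{\alpha'}]$, produces on the one hand terms $\del_{\alpha'}(\Gamma\Gamma^J)u$ with $|\Gamma\Gamma^J|\leq|I|-1$, and on the other hand extra translation derivatives $\del_{\alpha''}\Gamma^J u$ with $|J|<|I'|<|I|$. Every term on the right-hand side is of the advertised form, proving \eqref{eq1-10-10-2021}.

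For the second inequality \eqref{eq2-10-10-2021}, I would write
$$
[\del_\alpha\del_\beta,\Gamma^I]\,u=\del_\alpha\,[\del_\beta,\Gamma^I]\,u+[\del_\alpha,\Gamma^I]\,(\del_\beta u)
$$
and apply \eqref{eq1-10-10-2021} to each piece. The first term yields $\del_\alpha$ acting on $\sum_{|J|<|I|,\beta'}|\del_{\beta'}\Gamma^J u|$, which is already of the desired form. For the second term, \eqref{eq1-10-10-2021} applied with $u$ replaced by $\del_\beta u$ gives a sum of $|\del_{\alpha'}\Gamma^J\del_\beta u|$ with $|J|<|I|$; commuting $\Gamma^J$ past $\del_\beta$ once more via \eqref{eq1-10-10-2021} converts each into the form $|\del_{\alpha'}\del_{\beta'}\Gamma^{J'}u|$ with $|J'|\leq|J|<|I|$, plus lower-order terms already controlled.

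The only potential obstacle is the combinatorial bookkeeping to ensure the index counts $|J|<|I|$ (respectively $|J'|<|I|$) are preserved at every stage of the induction; no analytic subtlety arises, because the vector fields in $V$ act on smooth functions and all relevant identities are purely algebraic.
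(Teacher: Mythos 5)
Your strategy---induction on $|I|$, peeling off one factor and using the identity $[\del_\alpha,\Gamma\Gamma^{I'}]=[\del_\alpha,\Gamma]\Gamma^{I'}+\Gamma[\del_\alpha,\Gamma^{I'}]$---is the same as the paper's; the paper peels the \emph{last} factor ($\Gamma^I=\Gamma^{I_1}\Gamma^{I_2}$ with $|I_2|=1$) while you peel the first, but this is immaterial. Your base-case computations of $[\del_\alpha,\del_\beta]$, $[\del_\alpha,\Omega_{bc}]$, $[\del_\alpha,L_a]$ are all correct. Your reduction of \eqref{eq2-10-10-2021} to two applications of the first-order result via $[\del_\alpha\del_\beta,\Gamma^I]=\del_\alpha[\del_\beta,\Gamma^I]+[\del_\alpha,\Gamma^I]\del_\beta$ is also the route the paper takes.

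There is, however, one genuine gap in how you set up the induction. You take the hypothesis to be the pointwise inequality $|[\del_\alpha,\Gamma^{I'}]u|\lesssim\sum_{|J|<|I'|,\alpha'}|\del_{\alpha'}\Gamma^J u|$ and then say you \emph{apply} $\Gamma$ to the outcome. That step is not legitimate as written: a pointwise bound on $|f|$ gives no control over $|\Gamma f|$, since $\Gamma$ is a first-order differential operator. What you actually need, and what the paper explicitly inducts on (its display \eqref{eq3-10-10-2021}), is the exact algebraic identity with constant coefficients
$$
[\del_{\alpha},\Gamma^{I}]=\sum_{|J|<|I|}\sum_{\beta}\pi_{\alpha J}^{I\beta}\,\del_{\beta}\Gamma^{J},
$$
from which the lemma's inequality then follows by triangle inequality. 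Once you replace the inequality by this operator identity as the statement being inducted, the rest of your argument---applying $\Gamma$ termwise, commuting $\Gamma$ past $\del_{\alpha'}$ via the base case, absorbing the leftover commutator with the hypothesis, and likewise for \eqref{eq2-10-10-2021}---goes through exactly as you describe, and you end up with the paper's proof in all but cosmetic detail.
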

\begin{proof}
	We need to establish the following decomposition:
	\begin{equation}\label{eq3-10-10-2021}
	[\del_{\alpha},\Gamma^I] = \sum_{ |J|<|I|}\sum_{}\pi_{\alpha J}^{I\beta}\del_{\beta}\Gamma^Ju
	\end{equation}
	where $\pi_{\alpha J}^{I \beta}$ are constants determined by $\alpha, I$. When $|I|=0$ the sum is understood to be zero. This can be checked by induction. First, when $|I| = 1$ and $\Gamma^I = \del_{\gamma}$, the commutators vanish. When $\Gamma^I = L_b$,
	$$
	[\del_t,L_b] = \del_b,\quad [\del_a,L_b] = \delta_{ab}\del_t
	$$
	which verify \eqref{eq3-10-10-2021}. Now suppose that \eqref{eq3-10-10-2021} holds for $|I|\leq k$ and we check the case with $|I| = k+1$. In this case we write
	$$
	[\del_{\alpha},\Gamma^I]u = [\del_{\alpha},\Gamma^{I_1}\Gamma^{I_2}]
	= [\del_{\alpha},\Gamma^{I_1}]\Gamma^{I_2}u +  \Gamma^{I_1}\big([\del_{\alpha},\Gamma^{I_2}]u\big).
	$$
	Then suppose that $|I_2|=1$. By the assumption of induction, 
	$$
	\aligned
	\,[\del_{\alpha},\Gamma^I]u 
	=& \sum_{|J_1|<|I_1|}\sum_{\beta}\pi_{\alpha J_1}^{I_1\beta}\del_{\beta}\Gamma^{J_1}\Gamma^{I_2}u 
	+ \sum_{\beta}\Gamma^{I_1}\big(\pi_{\alpha}^{I_2\beta}\del_{\beta} u \big) 
	\\
	=&\sum_{|J_1|<|I_1|}\sum_{\beta}\pi_{\alpha J_1}^{I_1\beta}\del_{\beta}\Gamma^{J_1}\Gamma^{I_2}u 
	+ \sum_{\beta}\pi_{\alpha}^{I_2\beta}\del_{\beta}\Gamma^{I_1} u
	- \sum_{\beta}\pi_{\alpha}^{I_2\beta}[\del_{\beta},\Gamma^{I_1}] u
	\\
	=&\sum_{|J_1|<|I_1|}\sum_{\beta}\pi_{\alpha J_1}^{I_1\beta}\del_{\beta}\Gamma^{J_1}\Gamma^{I_2}u 
	+ \sum_{\beta}\pi_{\alpha}^{I_2\beta}\del_{\beta}\Gamma^{I_1} u
	\\
	&+ \sum_{|J_1|<|I_1|}\sum_{\beta,\gamma}\pi_{\alpha}^{I_2\beta}\pi_{\beta J_1}^{I_1\gamma}\del_{\gamma}\Gamma^{J_1}.
	\endaligned
	$$
	Here remark that in the above three sums, $|J_1|+|I_2|<|I|, |I_1|<|I|$. This closes the induction. 
	\\
	For \eqref{eq2-10-10-2021}, we need the following decomposition
	\begin{equation}
	[\del_{\alpha}\del_{\beta},\Gamma^I]u
	 = \sum_{|J|<|I|}\pi_{\alpha\beta J}^{I\alpha'\beta'}\del_{\alpha'}\del_{\beta'}\Gamma^Ju
	\end{equation}
	where $\pi_{\alpha\beta J}^{I\alpha'\beta'}$ are constants determined by $\alpha,\beta,I$. This is by apply twice \eqref{eq3-10-10-2021}.
\end{proof}
%
%
%


\subsection{Global Sobolev inequalities} 
 
Recall that we do not commute the equations with the scaling vector field $L_0$ when studying the wave--Klein-Gordon systems, so we cannot directly apply the Klainerman-Sobolev inequality with the scaling vector field $L_0$. Thus we turn to the special version of the Klainerman-Sobolev inequality \eqref{eq:K-S} proved in \cite{Klainerman852}. The inequality is of vital importance in our study as the scaling vector field $L_0$ is excluded, even though we have certain price to pay: 1) to get the pointwise decay for a function at time $t>1$ we need the future information of the function till time $2t$; 2) we do not have any $\langle t-r \rangle$-decay of the function compared with the version of the Klainerman-Sobolev inequality with the scaling vector field $L_0$. Both of the flaws cause extra difficulties, and we need very delicate analysis to conquer them.

\begin{proposition}\label{prop:K-S}
Let $u = u(t, x)$ be a sufficiently smooth function which decays sufficiently fast at space infinity for each fixed $t \geq 0$.
Then for any $t \geq 0$, $x \in \RR^2$, we have
\bel{eq:K-S}
|u(t, x)|
\lesssim \langle t+|x| \rangle^{-1/2} \sup_{0\leq s \leq 2t, |I| \leq 3} \big\| \Gamma^I u(s) \big\|,
\qquad
\Gamma \in V = \{ L_a, \del_\alpha, \Omega_{ab} = x^a \del_b - x^b \del_a \}.
\ee
\end{proposition}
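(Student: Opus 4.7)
The plan is to prove this Klainerman-Sobolev inequality by separating the argument into two regimes according to the relative sizes of $t$ and $|x|$, and applying a Sobolev embedding adapted to each. The trivial case $t + |x| \lesssim 1$ reduces directly to the two-dimensional Sobolev embedding $H^2(\mathbb{R}^2)\hookrightarrow L^\infty$ on the slice at time $t$; we therefore assume $t + |x| \geq 2$ throughout.

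For the exterior regime $|x| \geq t/2$ we have $|x| \sim t+|x|$, so the factor $|x|^{-1/2}$ is already what we need, and no future information is required. Writing polar coordinates $(r,\theta)$ on the slice at time $t$, apply the one-dimensional Sobolev embedding on the circle of radius $|x|$ with rotational derivative $\Omega = \partial_\theta$, and then the one-dimensional Sobolev embedding on the radial interval $[|x|-1,|x|+1]$ with $\partial_r$. Accounting for the area element $r\,dr\,d\theta$ and the fact that $r\sim|x|$ on the annulus yields
\begin{equation*}
|u(t,x)|^2 \lesssim |x|^{-1}\sum_{|I|\leq 2,\,\Gamma\in\{\partial_a,\,\Omega\}}\|\Gamma^I u(t,\cdot)\|^2,
\end{equation*}
which is stronger than the claimed bound.

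For the interior regime $|x| \leq t/2$ we have $t+|x| \sim t$, the required decay is $\langle t\rangle^{-1/2}$, and the sup over $s \in [0,2t]$ enters essentially. Introduce a smooth time cutoff $\chi(s/t)$ supported in $[1/2,2]$ with $\chi(1)=1$ and set $v(s,y)=\chi(s/t)\,u(s,y)$, so that $v(t,x)=u(t,x)$ and $v$ is compactly supported in the slab $s\in[t/2,2t]$ of width $\sim t$. Apply a spacetime Sobolev-type embedding to $v$: either the boost identity $t\partial_a = L_a - x_a\partial_t$ converts a spatial derivative of $v$ into a Lorentz-boost derivative of $u$ divided by $t$ (using $|x_a|\leq t/2$ in the interior), or a derivative of the time cutoff $\chi(s/t)$ yields a factor $t^{-1}$; in either case the $L^2$-integration over the slab of width $\sim t$ contributes the compensating $t^{1/2}$, and the net effect is precisely the claimed factor $\langle t\rangle^{-1/2}$. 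At most three vector fields from $V$ act on $u$, all evaluated at times in $[t/2,2t]\subset[0,2t]$.

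The main obstacle will be in the interior regime: one has to arrange the bookkeeping so that the $t$-weights from the Lorentz boosts, from differentiating the time cutoff, and from the spacetime Jacobian combine to exactly $\langle t+|x|\rangle^{-1/2}$ without spurious powers of $t$, and to verify that no more than three vector fields $\Gamma\in V$ are ever applied to $u$; the exterior and trivial regimes are essentially standard.
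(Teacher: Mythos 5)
The paper does not prove this proposition; it cites \cite{Klainerman852} and moves on, so I am assessing your sketch on its own merits.

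Your trivial and exterior cases are sound: for $|x|\geq t/2$ the circle--then--radial one-dimensional Sobolev argument with the Jacobian $r\,dr\,d\theta$ is exactly the statement of the paper's Proposition~\ref{prop:S} and needs no future information.

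The interior argument, however, has a genuine gap, and it is precisely where you flagged the obstacle. Your counting assumes that \emph{every} derivative hitting $v(s,y)=\chi(s/t)\,u(s,y)$ in the spacetime Sobolev embedding gains a factor $t^{-1}$, either through the boost identity or through differentiating $\chi(s/t)$. But two sources of loss are unaccounted for. First, a time derivative $\partial_s$ can land on $u$ rather than on the cutoff, producing $\chi(s/t)\,\partial_s u$ with no $t^{-1}$ gain whatsoever; there is no way to convert a pure $\partial_t u$ into a boost. Second, and more fundamentally, even the undifferentiated term in any $H^2(\mathbb{R}^{1+2})\hookrightarrow L^\infty$ embedding on the slab already ruins the count: since $v$ is supported in a slab of $s$-width $\sim t$, one has
$$
\|v\|_{L^2([t/2,2t]\times\mathbb{R}^2)} \lesssim t^{1/2}\sup_{t/2\leq s\leq 2t}\|u(s)\|_{L^2(\mathbb{R}^2)},
$$
so the zeroth-order contribution is $\sim t^{1/2}\sup_s\|u(s)\|$ before any rescaling compensation, and after the rescaling in $s$ (which costs $t^{-1/2}$ in the measure but $t^{+k}$ for $k$ time derivatives) the net effect is $\sup_s\|u(s)\|$ with no decay at all; worse, the $\partial_s^2$ term produces a positive power of $t$. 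There is no way to arrange the exponents in a direct isotropic spacetime Sobolev so that the result is $\langle t\rangle^{-1/2}$. Also note that the boost identity gives $\partial_a = s^{-1}L_a - (y_a/s)\partial_s$ with $|y_a/s|$ merely bounded (not small) in the interior, so the $\partial_s u$ piece is $O(1)$, not $O(t^{-1})$.

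The mechanism that actually works in the interior is not a flat spacetime slab but the hyperboloidal restriction: if one sets $\bar{u}(y)=u(\sqrt{\rho^2+|y|^2},y)$ on the hyperboloid through $(t,x)$, then $\partial_{y_a}\bar{u}=\tau^{-1}L_a u$ \emph{identically}, with no spurious $\partial_t u$ term, and $\tau\gtrsim t$ throughout the relevant region. A scaled two-dimensional Sobolev on the hyperboloid then produces the $t^{-1/2}$ cleanly; the cutoff at $s=2t$ is what truncates the (otherwise infinite) hyperboloid and is the source of the $\sup_{0\leq s\leq 2t}$ in the statement. The remaining work is to relate the $L^2$ norm along the truncated hyperboloid to constant-time $L^2$ norms (for instance via a coarea/Fubini argument in time), and this is nontrivial but standard. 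Your flat-slab bookkeeping, by contrast, cannot be made to close, and you should replace it with the hyperboloidal change of variables.
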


However, when we concentrate on the region outside of the light cone, the situation becomes less complicated. In fact we have the following version of global inequality:
\begin{proposition}\label{prop:S}
Let $u = u(t, x)$ be a sufficiently smooth function which decays sufficiently fast at space infinity for each fixed $t \geq 0$. 	Then 
\begin{equation}
|\la r-t\ra^{\eta} u(t,x)|\lesssim \la r\ra^{-1/2}\sum_{|I|\leq 2}\|\la r-t\ra^{\eta} \Lambda^I u(t,\cdot)\|
\end{equation}
where $\Lambda$ represents any of the vector in $\{\del_r,\Omega = x^1\del_2-x^2\del_1\}$ and $\Lambda^I$ be a product of these vectors with order $|I|$.
\end{proposition}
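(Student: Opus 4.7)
I would fix $t\geq 0$ and work in polar coordinates $(r,\theta)$ on the spatial slice, writing $w(r,\theta):=\la r-t\ra^{\eta} u(t,r,\theta)$. A preliminary step reduces the statement to the unweighted bound $|w(r,\theta)|\lesssim \la r\ra^{-1/2}\sum_{|I|\leq 2}\|\Lambda^I w\|$: indeed $\Omega=\del_\theta$ commutes with $\la r-t\ra^{\eta}$, and $[\del_r,\la r-t\ra^{\eta}]=\eta(r-t)\la r-t\ra^{\eta-2}$ is pointwise dominated by $|\eta|\la r-t\ra^{\eta}$ since $\la r-t\ra\geq 1$. Iterating this up to two derivatives, $\Lambda^I w$ is a finite sum of terms $\la r-t\ra^{\eta}\Lambda^J u$ with $|J|\leq|I|$, and the asserted norm equivalence follows.

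For the main region $r\geq 1$, I would combine two one-dimensional arguments. First, the Sobolev embedding $H^1(S^1)\hookrightarrow L^\infty(S^1)$ applied at fixed $r$ gives $|w(r,\theta)|^2\lesssim \sum_{j\leq 1}\int_{S^1}|\Omega^j w|^2\,d\theta$. Second, for smooth $v$ with $\rho|v|^2\to 0$ as $\rho\to\infty$, the identity $r|v(r,\theta)|^2 = -\int_r^\infty \del_\rho(\rho|v|^2)\,d\rho$, together with the fact that the resulting term $-\int_r^\infty|v|^2\,d\rho$ is non-positive, yields
$$r|v(r,\theta)|^2\leq 2\int_r^\infty \rho\,|v|\,|\del_\rho v|\,d\rho.$$
Cauchy--Schwarz in the area measure $\rho\,d\rho\,d\theta$ on $\RR^2$, together with integration in $\theta$, then produces $r\int_{S^1}|v|^2\,d\theta\lesssim \|v\|_{L^2(\RR^2)}\|\del_r v\|_{L^2(\RR^2)}$. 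Applying this with $v=w$ and $v=\Omega w$, substituting into the angular bound, and using AM--GM, I obtain $r|w(r,\theta)|^2\lesssim\sum_{|I|\leq 2}\|\Lambda^I w\|^2$, which is the claim on $\{r\geq 1\}$ since $r\sim\la r\ra$ there.

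For $0\leq r\leq 1$, where $\la r\ra^{-1/2}$ is bounded below, it suffices to show $|w(r,\theta)|\lesssim \sum_{|I|\leq 2}\|\Lambda^I w\|$. This follows from the $r=1$ case of the previous step combined with $w(r,\theta)=w(1,\theta)-\int_r^1\del_\rho w(\rho,\theta)\,d\rho$ and a routine interior estimate using the angular Sobolev bound applied to $\del_r w$ together with the assumed smoothness of $u$ near the origin.

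The step I anticipate as the main obstacle is the weight-shifting reduction in the first paragraph, namely verifying that $\la r-t\ra^{\eta}$ passes through $\del_r$ up to second order without loss, independently of the sign of $\eta$. This rests on the single observation $\la r-t\ra^{\eta-k}\leq\la r-t\ra^{\eta}$ for $k\geq 0$, valid precisely because $\la r-t\ra\geq 1$; once this is secured, the remainder of the argument is the classical polar-coordinate Sobolev calculation.
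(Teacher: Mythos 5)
Your proof takes essentially the same route as the paper's: both reduce the weighted estimate to the classical unweighted one $|u|\lesssim\la r\ra^{-1/2}\sum_{|I|\leq 2}\|\Lambda^I u\|$ by commuting $\la r-t\ra^{\eta}$ through $\Lambda$, using that $\Omega$ annihilates $r$ and $t$ and that $\del_r\la r-t\ra^{\eta}$ is pointwise dominated by $\la r-t\ra^{\eta}$ since $\la r-t\ra\geq 1$ --- this is precisely the ``good terms'' observation the paper records, and your resolution of your own flagged obstacle is correct. The only difference is that the paper simply cites the unweighted inequality from \cite{Klainerman852}, whereas you reprove it; your polar-coordinate trace argument for $r\geq 1$ is sound (and in fact yields $r^{1/2}|w|\lesssim\sum\|\Lambda^I w\|$ for all $r>0$), but the passage you call a ``routine interior estimate'' for $0\leq r\leq 1$ is the genuinely delicate step: the one-dimensional FTC bound $\int_r^1|\del_\rho w|\,d\rho$ is not controlled by $\|\del_r w\|_{L^2(\rho\,d\rho\,d\theta)}$ because $\rho^{-1}$ is not integrable at the origin, so one must integrate by parts in $\rho$ to transfer a derivative and a factor of $\rho$ onto $\del_\rho^2 w$ rather than invoke smoothness qualitatively. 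This gap is, however, harmless for the paper's purposes, since every application of Proposition~\ref{prop:S} is to functions carrying cut-offs $\chi(r-t)$ or $\chi(1/2+2r/t)$ that vanish identically near $r=0$.
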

\begin{proof}[Sketch of proof]    
	This is a slightly modified version of the classical version (cf. for instance \cite{Klainerman852})
\begin{equation}\label{eq1-11-10-2021}
|u(t,x)|\leq \la r\ra^{-1/2} \sum_{|I|\leq 2}\|\Lambda^I u(t,\cdot)\|.
\end{equation}
We note $\Omega_{12}$ commutes with $r, t$, while we always get good terms when $\del_r$ acting on $\la r-t\ra^{\eta}$. Thus the proof is done.
\end{proof}


\subsection{Pointwise decay for Klein-Gordon components}
 
We recall that for linear homogeneous Klein-Gordon equations, the solutions decay at speed $\langle t+r \rangle^{-1}$ in $\RR^{1+2}$. Since the Kalinerman-Sobolev inequality in Proposition \ref{prop:K-S} gives at best $\langle t+r \rangle^{-1/2}$ decay rate for a given nice function in $\RR^{1+2}$, we need the following way to obtain optimal decay for Klein-Gordon components, which was introduced by Georgiev in \cite{Georgiev2}. 

We denote $\{ p_j \}_0^\infty$ a usual Paley-Littlewood partition of the unity
$$
1 = \sum_{j \geq 0} p_j(s),
\qquad
s \geq 0,
$$
which is assumed to satisfy 
$$
0 \leq p_j \leq 1,
\qquad
p_j \in C_0^\infty (\RR), 
\qquad
\text{for all $j \geq 0$},
$$
and the supports of the series satisfy
$$
\text{supp } p_0 \subset (-\infty, 2],
\qquad
\text{supp } p_j \subset [2^{j-1}, 2^{j+1}],
\qquad
\text{for all $j \geq 1$}.
$$

Now we are ready to give the statement of the decay result for the Klein-Gordon equations in \cite{Georgiev2}.
\begin{proposition}\label{prop:G}
Let $v$ solve the Klein-Gordon equation
$$
- \Box v + v = f,
$$
with $f = f(t, x)$ a sufficiently nice function.
Then for all $t \geq 0$, it holds
\be 
\aligned
&\langle t + |x| \rangle |v(t, x)|
\\
\lesssim
&\sum_{j\geq 0,\, |I| \leq 4} \sup_{0\leq s \leq t} p_j(s) \big\| \langle s+|x| \rangle \Gamma^I f(s, x) \big\|
+
\sum_{j\geq 0,\, |I| \leq 4} \big\| \langle |x| \rangle p_j (|x|) \Gamma^I v(0, x) \big\|
\endaligned
\ee

\end{proposition}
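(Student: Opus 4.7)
The plan is to follow Georgiev's original argument in \cite{Georgiev2}, which combines a hyperboloidal foliation of the interior of the light cone with a Paley-Littlewood decomposition of the Cauchy data and the forcing. The improvement from the $\la t+|x|\ra^{-1/2}$ decay given by Proposition \ref{prop:K-S} to the sharp $\la t+|x|\ra^{-1}$ decay comes from working on hyperboloids $H_s = \{t^2 - r^2 = s^2\}$, on which the vector fields $L_a$ and $\Omega_{ab}$ are tangential and on which one has the pointwise coercivity $t \ge s$ that is responsible for the extra decay factor.

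I would first split $v = v_h + v_i$, where $v_h$ solves the homogeneous Klein-Gordon equation with the prescribed Cauchy data and $v_i$ solves the inhomogeneous equation with vanishing data at $t=0$. For $v_h$, I would apply the spatial decomposition $1 = \sum_j p_j(|x|)$ to the initial data so that $v_h = \sum_j v_{h,j}$, each $v_{h,j}$ launched from data supported in the annulus $\{|x| \sim 2^j\}$. By finite propagation speed the support of $v_{h,j}(t,\cdot)$ is confined to $\{|x| \lesssim t+2^j\}$, and its $\Gamma^I$-commuted hyperboloidal energies are controlled by the data norms $\|\la |x|\ra p_j(|x|)\Gamma^I v(0)\|$ appearing in the second sum. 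A hyperboloidal Klainerman-Sobolev inequality of the form $|w(t,x)| \lesssim t^{-1}\sum_{|I|\le 2}\|\Gamma^I w\|_{L^2(H_s)}$ then yields the claimed decay for each $v_{h,j}$. For $v_i$, I would decompose the forcing in time as $f = \sum_j p_j(s) f$ and solve $(-\Box + 1) v_{i,j} = p_j(s) f$ with zero data for each $j$. Commuting with $\Gamma^I$ for $|I| \le 4$ and integrating \eqref{eq:natural-wKG2} over the time interval $[2^{j-1}, 2^{j+1}]$ on which $p_j(s)$ is supported, the weight $\la s+|x|\ra$ is used to absorb the $\sim 2^j$ length of the time integration, producing the factor $\sup_{0\le s\le t} p_j(s)\|\la s+|x|\ra\Gamma^I f(s)\|$. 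A second application of the hyperboloidal Klainerman-Sobolev inequality then converts this into the pointwise $\la t+|x|\ra^{-1}$ decay.

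The main obstacle will be the exterior region $\{|x| > t\}$, which the hyperboloids $H_s$ do not cover. There, however, $\la t+|x|\ra \sim \la|x|\ra$, and finite propagation speed for $(-\Box + 1)$ implies that $v(t,x)$ only depends on data and forcing inside the spacelike past of $(t,x)$, a region contained in $\{|y| \lesssim |x|\}$. A direct application of Proposition \ref{prop:S}, which does not rely on hyperboloidal methods, combined with a weighted energy estimate then recovers the required $\la |x|\ra^{-1}$ decay with the right-hand side as stated. The summation in $j$ converges geometrically thanks to the spatial weight $\la s+|x|\ra$ working against the dyadic scale $2^j$, and the budget of four vector fields provides enough derivatives to accommodate the two-dimensional Sobolev embedding on the hyperboloid together with the commutators generated by the Littlewood-Paley cutoffs.
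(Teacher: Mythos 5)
The paper does not prove Proposition \ref{prop:G}; it quotes the result from Georgiev \cite{Georgiev2} and applies it as a black box. So the reference point for your sketch is Georgiev's original argument (and the companion Klainerman \cite{Klainerman93}), not anything in this text, and the question is whether your outline would actually produce the stated estimate.

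Your plan captures the high-level strategy — Littlewood--Paley decomposition in time on $f$ and in space on the Cauchy data, plus a hyperboloidal Klainerman--Sobolev inequality on the interior — but two steps would fail as written. First, the hyperboloids $H_s = \{t^2 - r^2 = s^2\}$ foliate only the interior of the forward light cone from the spacetime origin, while $v_{h,j}$ is launched from data supported in the annulus $\{|x| \sim 2^j\}$, and therefore lives outside that cone for all small $t$. Georgiev's argument handles this by \emph{translating the apex}: each dyadic piece is estimated on the foliation $(t + c\,2^j)^2 - r^2 = s^2$, so that $t + c\,2^j \geq s$ supplies the decay factor, and the shifted decay $\la t + 2^j\ra^{-1}$ is comparable to the true $\la t + |x|\ra^{-1}$ on the support. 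Without the translation the hyperboloidal Klainerman--Sobolev inequality does not even apply to $v_{h,j}$, and the same problem recurs for $v_{i,j}$ because $f$ is not spatially compactly supported. This is not a cosmetic omission — it is the step that makes the dyadic decomposition interact correctly with the hyperboloidal geometry, and it is what forces the particular shape of the right-hand side in the proposition.

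Second, the exterior argument is quantitatively short by half a power. Proposition \ref{prop:S} gives $|v(t,x)| \lesssim \la r - t\ra^{-\eta}\la r\ra^{-1/2}\cdot(\text{weighted }L^2\text{ norm})$, so the claimed $\la t + |x|\ra^{-1}$ decay would require $\la r - t\ra^{\eta}\la r\ra^{1/2} \gtrsim \la t + r\ra$ throughout $\{|x| > t\}$. That inequality fails in the near-light-cone strip $\{t < |x| < t + O(1)\}$, where the left side is $\sim t^{1/2}$ and the right side is $\sim t$: precisely the delicate region. A weighted energy bound cannot rescue this, because the obstruction is in the Sobolev step, not the energy step. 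What actually produces the extra half power near the light cone in the exterior is the Klein--Gordon-specific identity of Hörmander--Klainerman type (a version appears as Proposition \ref{prop:KG-extra} in the paper, though restricted to $|x| \leq 3t$), or, in Georgiev's original proof, the explicit oscillatory-integral representation of the propagator. Your sketch replaces this by Proposition \ref{prop:S}, which is a flat Sobolev trace and simply does not see the mass term, so the argument breaks down exactly where the proposition is hardest.
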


As a consequence, we have the following simplified version of Proposition \ref{prop:G}.

\begin{proposition}\label{prop:G1}
With the same settings as Proposition \ref{prop:G}, let $\delta' > 0$ and assume 
$$
 \sum_{|I| \leq 4} \big\| \langle s+|x| \rangle \Gamma^I f(s, x) \big\|
\leq  C_f \langle s \rangle^{-\delta'} ,
$$
then we have
\be 
\langle t + |x| \rangle |v(t, x)| 
\lesssim
{C_f \over 1-2^{-\delta'} }
+
\sum_{|I| \leq 4} \big\| \langle |x| \rangle \log\langle |x| \rangle  \Gamma^I v(0, x) \big\|.
\ee
\end{proposition}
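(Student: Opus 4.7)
The plan is to deduce Proposition \ref{prop:G1} from Proposition \ref{prop:G} by controlling both sums on the right-hand side of the raw inequality: the source-term sum via the hypothesis on $f$ (which yields a geometric series), and the initial-data sum via a Cauchy--Schwarz argument that trades the $\ell^1$-sum in $j$ for a logarithmic weight in space.

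First I would apply Proposition \ref{prop:G} directly, so that the task reduces to estimating
\[
\Sigma_f := \sum_{j\geq 0,\,|I|\leq 4}\sup_{0\leq s \leq t} p_j(s)\,\bigl\|\langle s+|x|\rangle\,\Gamma^I f(s,x)\bigr\|,
\qquad
\Sigma_v := \sum_{j\geq 0,\,|I|\leq 4}\bigl\|\langle |x|\rangle\, p_j(|x|)\,\Gamma^I v(0,x)\bigr\|.
\]
For $\Sigma_f$ I would use that $\mathrm{supp}\,p_j\subset[2^{j-1},2^{j+1}]$ for $j\geq 1$ (and $\subset[0,2]$ for $j=0$), so that on the support of $p_j$ one has $\langle s\rangle^{-\delta'}\lesssim 2^{-j\delta'}$. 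The hypothesis then gives
\[
\sup_{0\leq s\leq t}p_j(s)\,\bigl\|\langle s+|x|\rangle\,\Gamma^I f(s,x)\bigr\|\;\lesssim\; C_f\,2^{-j\delta'},
\]
and summing the geometric series yields $\Sigma_f \lesssim C_f/(1-2^{-\delta'})$, which is the first term claimed.

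For $\Sigma_v$, fix $|I|\leq 4$ and write $g:=\Gamma^I v(0,\cdot)$. I would insert a weight $(1+j)^{-1}$ and apply Cauchy--Schwarz in $j$:
\[
\sum_{j\geq 0}\bigl\|\langle |x|\rangle p_j(|x|) g\bigr\|
\;\leq\;\Bigl(\sum_{j\geq 0}\tfrac{1}{(1+j)^2}\Bigr)^{1/2}\Bigl(\sum_{j\geq 0}(1+j)^2\bigl\|\langle |x|\rangle p_j(|x|)g\bigr\|^2\Bigr)^{1/2}.
\]
On $\mathrm{supp}\,p_j$ one has $(1+j)\lesssim 1+\log\langle |x|\rangle \lesssim \log(2+|x|)$, so the square-summand is bounded by $\|\langle|x|\rangle\log\langle|x|\rangle\, p_j(|x|)g\|^2$. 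Since $\sum_j p_j(r)^2\leq \bigl(\sum_j p_j(r)\bigr)^2=1$, orthogonality in the disjointly-supported frequency bands then gives
\[
\sum_{j\geq 0}(1+j)^2\bigl\|\langle |x|\rangle p_j(|x|)g\bigr\|^2
\;\lesssim\;\bigl\|\langle |x|\rangle \log\langle |x|\rangle\, g\bigr\|^2,
\]
so $\Sigma_v\lesssim \sum_{|I|\leq 4}\|\langle|x|\rangle\log\langle|x|\rangle \Gamma^I v(0,x)\|$, as required.

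The only genuinely delicate step is the second one, where one must be a bit careful near $j=0$ (since $\log\langle|x|\rangle$ vanishes at $|x|=0$ while the factor $(1+j)=1$ is harmless) and must verify that the $p_j(r)\sim 1$ localization really allows the pointwise comparison $(1+j)\lesssim 1+\log\langle r\rangle$ uniformly on $\mathrm{supp}\,p_j$. Once this is in place the argument is purely bookkeeping, and the two constants combine to give the bound stated in Proposition \ref{prop:G1}.
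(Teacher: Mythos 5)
Your proof takes the natural route, and indeed the paper itself gives no explicit proof of Proposition~\ref{prop:G1} --- it simply says ``As a consequence \dots'' --- so there is no written argument to compare against; your two-step reduction (geometric series for the source sum, Cauchy--Schwarz with the $(1+j)$ weight and almost-disjointness of the $p_j$ supports for the initial-data sum) is the standard and correct way to deduce it from Proposition~\ref{prop:G}.

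The one point you flag yourself is real and worth making precise. Your Cauchy--Schwarz step legitimately gives
$$
\sum_{j\geq 0}(1+j)^2\bigl\|\langle |x|\rangle p_j(|x|)g\bigr\|^2
\;\lesssim\;\bigl\|\langle |x|\rangle \bigl(1+\log\langle |x|\rangle\bigr) g\bigr\|^2,
$$
using that on $\mathrm{supp}\,p_j$ one has $(1+j)\lesssim 1+\log\langle|x|\rangle$ and that only $O(1)$ of the $p_j$'s are nonzero at any given $|x|$. But the passage from $1+\log\langle|x|\rangle$ to plain $\log\langle|x|\rangle$ fails, since $\log\langle|x|\rangle=0$ at $x=0$ while the $j=0$ term $\|\langle|x|\rangle p_0(|x|)g\|$ is generically nonzero. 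This is not a flaw in your argument but rather a slight typographical sloppiness in the statement of Proposition~\ref{prop:G1}: the weight should read $\log(1+\langle|x|\rangle)$ (which is bounded below by $\log 2$), exactly as it appears in the smallness assumption of Theorem~\ref{thm:main1}. With that correction the estimate you establish is precisely what is needed, and the argument is complete. You should state this discrepancy rather than leaving it as an unresolved ``delicate step''; otherwise the proof is fine.
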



\section{Extra decay for wave and Klein-Gordon components}\label{sec:extra-decay}

In the analysis, we will distinguish the regions $\{(t, x) : |x| \leq 2t \}$ and $\{(t, x) : |x| \geq 2t \}$, because we can take advantage of the extra decay properties of wave and Klein-Gordon components away from the light cone. So the extra decay properties of wave and Klein-Gordon components in the propositions below will also be demonstrated differently in different spacetime regions.

\subsection{Extra decay for Hessian of wave components}

\begin{proposition}\label{prop:wave-extra}
Consider the wave equation
$$
-\Box w = F_w,
$$
then we have
\bel{eq:w-Hessian1}
|\del \del w|
\lesssim
{1\over \langle t-r \rangle} \big(|\del \Gamma w| + |\del w| \big) + {t \over \langle t-r \rangle} |F_w|,
\qquad
\text{for } |x| \leq 3t,
\ee
as well as
\bel{eq:w-Hessian2}
|\del \del w|
\lesssim
{r\over \langle t\rangle^2} \big(|\del \Gamma w| + |\del w| \big) + |F_w|,
\qquad
\text{for } |x| \geq 3t/2,
\ee
\end{proposition}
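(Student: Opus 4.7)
\smallskip
\noindent\textbf{Proof plan.} The plan is to differentiate the algebraic relation $L_a w = x_a \del_t w + t \del_a w$ in order to recover each second-order derivative of $w$ in a controlled way. Differentiating in $\del_a$ gives $t\,\del_a \del_b w = \del_a L_b w - \delta_{ab}\,\del_t w - x_b\,\del_a \del_t w$, and differentiating in $\del_t$ gives $t\,\del_t \del_a w = \del_t L_a w - \del_a w - x_a\,\del_t^2 w$. Multiplying the first identity by $t$ and substituting the second to eliminate the mixed derivative $\del_a \del_t w$ yields the master identity
\be
t^2\,\del_a \del_b w - x_a x_b\,\del_t^2 w = t\,\del_a L_b w - t\,\delta_{ab}\,\del_t w - x_b\,\del_t L_a w + x_b\,\del_a w,
\ee
whose right-hand side is a linear combination of $\del\Gamma w$ and $\del w$ only; crucially, no $F_w$ term appears at this stage.

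Next I would bring in the wave equation $\del_t^2 w = \Delta w + F_w$ and sum the master identity over $a=b\in\{1,2\}$. After replacing $\Delta w$ by $\del_t^2 w - F_w$ and collecting terms, this produces
\be
(t^2 - r^2)\,\del_t^2 w = \sum_a \big(t\,\del_a L_a w - x_a\,\del_t L_a w\big) - 2t\,\del_t w + x^a \del_a w + t^2 F_w.
\ee
Feeding this closed formula for $\del_t^2 w$ back into the master identity expresses every spatial Hessian component of $w$ as an explicit rational combination of $\del\Gamma w$, $\del w$, and $F_w$ with denominators only of the form $t^2-r^2$ or $t^2$; the mixed derivative $\del_t \del_a w$ is in turn recovered directly from $t\,\del_t\del_a w = \del_t L_a w - \del_a w - x_a\,\del_t^2 w$. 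The rest of the proof is bookkeeping of the coefficients in the two spacetime regions.

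In the region $|x|\le 3t$, one has $r\lesssim t$ and $|t^2-r^2|=|t-r|(t+r)\gtrsim t\,|t-r|$; the numerator in the formula above is $\lesssim t(|\del\Gamma w|+|\del w|)+t^2|F_w|$, so $|\del_t^2 w|\lesssim |t-r|^{-1}(|\del\Gamma w|+|\del w|)+t\,|t-r|^{-1}|F_w|$. Because $x_a x_b/t^2\lesssim 1$ and the $F_w$-free piece contributed by the master identity is bounded by $O(t^{-1})(|\del\Gamma w|+|\del w|)$, the same estimate propagates to every Hessian component of $w$ and establishes \eqref{eq:w-Hessian1} whenever $|t-r|\gtrsim 1$. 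The complementary range $\la t-r\ra\lesssim 1$, which in particular contains all small $t$ in this region, is trivially covered on taking $\Gamma=\del\in V$.

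In the region $|x|\ge 3t/2$ one has $t\le 2r/3$ and $|t^2-r^2|\ge (5/9)r^2$, which gives $|\del_t^2 w|\lesssim r^{-1}(|\del\Gamma w|+|\del w|)+(t/r)^2|F_w|\lesssim r^{-1}(|\del\Gamma w|+|\del w|)+|F_w|$. The critical piece of bookkeeping arises when this is fed back into $t^2\del_a\del_b w = x_a x_b\,\del_t^2 w + (\text{terms without }F_w)$: the combination $|x_a x_b|/|t^2-r^2|\le r^2/|t^2-r^2|\lesssim 1$ must be kept as a single block rather than split into $r^2/t^2$, so that the coefficient of $|F_w|$ in the final estimate stays of order one instead of degenerating to $r^2/t^2$. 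Combined with the $O(r/t^2)$ bound on the remaining terms, this yields \eqref{eq:w-Hessian2} for $t\gtrsim 1$; the residual small-$t$ case is again automatic, since $r/\la t\ra^2\gtrsim 1$ outside a compact spacetime set. This last piece of bookkeeping in the region $|x|\ge 3t/2$ is the main technical obstacle.
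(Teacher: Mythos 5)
Your proposal is correct and is essentially the same argument as the paper's: both invert the relation $L_a = x_a\partial_t + t\partial_a$ to write every Hessian entry of $w$ in terms of $\partial\Gamma w$, $\partial w$ and $\partial_t^2 w$, then use $\Box w = -F_w$ to solve for $(t^2-r^2)\partial_t^2 w$, and finally do region-by-region bookkeeping of the rational coefficients $t/\langle t-r\rangle$, $r^2/(t^2\langle t-r\rangle)$, etc. The only soft spot you flag — the compact region $r\lesssim 1$, $t\lesssim 1$ in the exterior estimate \eqref{eq:w-Hessian2}, where $r/\langle t\rangle^2$ is not bounded below — is present in the paper as well (it is dismissed there with "it is easily seen that the results hold for $t\le 1$"), so this is a shared imprecision in the statement rather than a defect of your argument.
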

\begin{proof}
For completeness we revisit the proof in \cite{PLF-YM-book}. Since it is easily seen that the results hold for $t \leq 1$, so we will only consider the case $t \geq 1$.

We first express the wave operator $- \Box$ by $\del_t, L_a$ to get
\bel{eq:wave-2expression}
\aligned
-\Box
=
{(t-|x|) (t+ |x|) \over t^2} \del_t\del_t
+ {x^a \over t^2} \del_t L_a
- {1\over t} \del^a L_a
+ {2 \over t} \del_t
-  {x^a \over t^2} \del_a.
\endaligned
\ee
When $t\geq 1$, one has
$$
| \del_t\del_t w |
\lesssim
{1\over \la r-t\ra} \big( \big| \del \Gamma w \big| + \big| \del w \big|  \big)
+
{t^2\over \la r-t\ra\la r+t\ra}|F_w|.
$$

On the other hand, we note that the following relations hold true
$$
\aligned
\del_a \del_t 
&= - {x_a \over t} \del_t \del_t + {1\over t} \del_t L_a  - {1\over t} \del_a,
\\
\del_a \del_b
&= {x_a x_b \over t^2} \del_t\del_t 
- {x_a \over t^2} \del_t L_b 
+ {1\over t} \del_b L_a 
- {\delta_{ab} \over t} \del_t
+ {x_a \over t^2} \del_b,
\endaligned
$$
This leads to
$$
|\del_{\alpha}\del_{\beta}w|\lesssim \frac{r^2}{\la t\ra^2\la r-t\ra}\big(|\del\Gamma w| + |\del w|\big) + \frac{r|F_w|}{\la r-t\ra}.
$$
Now when $r\leq3t$, we remark that $r\leq \la t\ra$. Then the above bound reduces to \eqref{eq:w-Hessian1}. When $r\geq 3t/2$, $\la r+t\ra\lesssim \la r-t\ra$. The above bound reduces to \eqref{eq:w-Hessian2}.

%
%
%

%
\end{proof}

\subsection{Extra decay for wave components}

We recall the smooth and increasing function defined in \eqref{eq:cut-off}
\begin{eqnarray}\notag
\chi(s):=
\left\{
\begin{array}{lll}
0, & \quad s\leq 1,
\\
1, &\quad s \geq 2.
\end{array}
\right.
\end{eqnarray}

\begin{proposition}\label{prop:wave-extra2}
	Consider the wave equation
	$$
	-\Box w = F_w,
	$$
	then we have
	\bel{eq:wave-extra1}
	\aligned
	&\int_{\RR^2} \chi (r-t) \langle r-t \rangle^{2\eta} (\del w)^2 \, dx
	\\
	\lesssim
	&\big\| \langle r \rangle^{ \eta} \del w (0, \cdot) \big\|^2 
	+
	\int_0^t \int_{\RR^2} \chi(r-\tau) \langle r-\tau \rangle^{2\eta} \big| F_w \del_t w \big| \, dxd\tau.
	\endaligned
	\ee
	
\end{proposition}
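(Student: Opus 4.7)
The plan is to carry out an energy estimate in the exterior region using the weighted multiplier $\phi(r-t)\del_t w$, where $\phi(s):=\chi(s)\langle s\rangle^{2\eta}$. Since $\chi(s)=0$ for $s\leq 1$, this weight is supported in $\{r-t\geq 1\}$, so the identity automatically localizes to the exterior of the light cone. Multiplying the equation $-\Box w=F_w$ by $\phi(r-t)\del_t w$ and using the standard Leibniz manipulations on $\del_t^2 w\cdot\del_t w$ and $\Delta w\cdot\del_t w$, one arrives at a divergence identity of the shape
\[
\del_t\Bigl(\tfrac{1}{2}\phi(r-t)|\del w|^2\Bigr)-\sum_a\del_a\bigl(\phi(r-t)\,\del_a w\,\del_t w\bigr)+\mathcal{Q}[w]=F_w\,\phi(r-t)\,\del_t w,
\]
where the bulk term $\mathcal{Q}[w]$ collects the contributions of the derivatives falling on the weight $\phi(r-t)$, namely
\[
\mathcal{Q}[w]=\tfrac{1}{2}\phi'(r-t)|\del w|^2+\phi'(r-t)\,\del_r w\,\del_t w.
\]

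The key observation is that $\mathcal{Q}[w]\geq 0$. Indeed $\phi'(s)=\chi'(s)\langle s\rangle^{2\eta}+2\eta\,\chi(s)\,s\,\langle s\rangle^{2\eta-2}\geq 0$ for all $s\geq 0$ (both $\chi$ and $\langle\cdot\rangle^{2\eta}$ being increasing on $[0,\infty)$, with $\eta>0$). Completing the square gives
\[
|\del w|^2+2\del_r w\,\del_t w=(\del_t w+\del_r w)^2+\bigl(|\nabla w|^2-(\del_r w)^2\bigr),
\]
and the parenthesis on the right is the squared tangential gradient, hence non-negative. Therefore $\mathcal{Q}[w]\geq 0$ pointwise, and this term can be dropped after integration.

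Integrating the divergence identity over $[0,t]\times\RR^2$ via Stokes' formula, the spatial divergence vanishes at infinity (assuming the usual decay of $w$), the bulk $\mathcal{Q}$-integral is discarded, and the right-hand side is estimated by $|F_w|\,\phi(r-t)\,|\del_t w|$. What remains is
\[
\int_{\RR^2}\phi(r-t)|\del w|^2(t,\cdot)\,dx\lesssim \int_{\RR^2}\phi(r)|\del w|^2(0,\cdot)\,dx+\int_0^t\!\!\int_{\RR^2}\chi(r-\tau)\langle r-\tau\rangle^{2\eta}|F_w\,\del_t w|\,dxd\tau.
\]
Bounding the initial contribution by $\|\langle r\rangle^\eta\,\del w(0,\cdot)\|^2$ (using $\chi(r)\leq 1$) yields \eqref{eq:wave-extra1}.

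The only delicate point in this argument is verifying the sign of $\mathcal{Q}[w]$; once the good derivative $\del_t w+\del_r w$ is exposed, the rest is a clean multiplier-and-Stokes calculation, analogous to the ghost weight estimates recalled in Section~\ref{sec:pre} but with the outgoing weight $\chi(r-t)\langle r-t\rangle^{2\eta}$ replacing Alinhac's $e^{q(t,r)}$.
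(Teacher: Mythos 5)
Your proof is correct and follows essentially the same route as the paper: the same multiplier $\chi(r-t)\langle r-t\rangle^{2\eta}\,\partial_t w$, the same divergence identity, the same observation that the bulk term carries a nonnegative sign, and then Stokes' formula. The only cosmetic difference is how you exhibit the positivity — your completion of the square $(\partial_t w+\partial_r w)^2+\bigl(|\nabla w|^2-(\partial_r w)^2\bigr)$ is precisely $\sum_a|G_a w|^2$, the combination the paper uses directly.
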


\begin{proof}
	We pick $\chi(r-t) \langle r-t \rangle^{2\eta} \del_t w$ as the multiplier, and we derive the identity
	$$
	\aligned
	&{1\over 2} \del_t \Big( \chi(r-t) \langle r-t \rangle^{2\eta} \big( (\del_t w)^2 + \sum_a (\del_a w)^2 \big) \Big)
	-
	\del_a \big( \chi(r-t)  \langle r-t \rangle^{2\eta} \del^a w \del_t w \big)
	\\
	+
	&\frac{1}{2}\big(\chi'(r-t) \langle r-t \rangle^{2\eta} + 2\eta\chi(r-t)\la r-t\ra^{2\eta-2}(r-t)\big)\sum_a|G_au|^2 
	\\
	=
	&\chi(r-t)  \langle r-t \rangle^{2\eta} F_w \del_t w.
	\endaligned
	$$
	We observe that
	$$
	\big(\chi'(r-t) \langle r-t \rangle^{2\eta} + 2\eta\chi(r-t)\la r-t\ra^{2\eta-2}(r-t)\big)\sum_a|G_au|^2
	\geq 0.
	$$
	Then we are led to the desired energy estimates \eqref{eq:wave-extra1} by integrating the above identity over the spacetime region $[0, t] \times \RR^2$.
	
	The proof is complete.
\end{proof}

The following energy estimates allow us, in many cases, to gain better $t$-bound for the energy of the wave component at the expense of losing some $\langle t-r \rangle$-bound inside of the light cone $\{ r\leq t \}$. This idea was applied for instance in \cite{Dong2006} when studying the Klein-Gordon-Zakharov equation in two space dimensions with compactness assumptions, and is now adapted to the non-compact setting.

\begin{proposition}\label{prop:wave-extra3}
Consider the wave equation
$$
-\Box w = F_w.
$$
We have
\bel{eq:EE-wave}
\aligned
&\int_{\RR^2} \Big( (t-r)^{-\eta} \chi(t-r) + 1 - \chi(t-r) \Big) \big| \del w \big|^2 \, dx
\\
\lesssim
&\Ecal_{gst} (t_0, w) 
+
\int_{t_0}^t \Big( (\tau-r)^{-\eta} \chi(\tau-r) + 1 - \chi(\tau-r) \Big) \big| F_w \del_t w\big| \, dxd\tau.
\endaligned
\ee
\end{proposition}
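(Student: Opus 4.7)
The plan is to test the equation $-\Box w = F_w$ against the multiplier $W(\tau, r) \, \del_\tau w$, where
\[
W(\tau, r) := (\tau-r)^{-\eta}\chi(\tau-r) + 1 - \chi(\tau-r) =: f(\tau - r),
\]
with $f(s) := s^{-\eta}\chi(s) + 1 - \chi(s)$ understood as $f \equiv 1$ on $\{s \leq 1\}$ (the formula already gives this, as $\chi\equiv 0$ there). Following exactly the ghost-weight algebra that produces the identity preceding \eqref{eq:gst-wKG1} (and that appears in the proof of Proposition \ref{prop:wave-extra2}), this would yield the pointwise divergence identity
\[
\tfrac{1}{2}\del_\tau\bigl(W |\del w|^2\bigr) - \del_a\bigl(W  \del_a w \, \del_\tau w\bigr) - \tfrac{1}{2} f'(\tau-r)\sum_a (G_a w)^2 = W F_w \del_\tau w,
\]
after using $\del_\tau W = f'(\tau-r)$, $\del_a W = -(x_a/r) f'(\tau-r)$, and the elementary identity $\sum_a (G_a w)^2 = (\del_\tau w)^2 + 2(x_a/r)\del_a w\,\del_\tau w + \sum_a (\del_a w)^2$ to regroup the terms carrying derivatives of $W$.

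The decisive step is to verify that $f'(s) \leq 0$ for every $s \in \RR$. On $(-\infty, 1]$ one has $f \equiv 1$, so $f' \equiv 0$; on $[2, +\infty)$ one has $f(s) = s^{-\eta}$ and $f'(s) = -\eta s^{-\eta-1} < 0$; and on the transition interval $[1, 2]$
\[
f'(s) = -\eta s^{-\eta-1}\chi(s) + (s^{-\eta} - 1)\chi'(s),
\]
whose two summands are individually non-positive since $\chi \geq 0$, $\chi' \geq 0$ (recall $\chi$ is increasing by \eqref{eq:cut-off}), and $s^{-\eta} \leq 1$ for $s \geq 1$. Hence the ghost-type term $-\tfrac{1}{2}f'(\tau-r)\sum_a (G_a w)^2$ is pointwise non-negative and can be discarded when deriving the upper bound \eqref{eq:EE-wave}.

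Integrating the identity over the slab $[t_0, t]\times\RR^2$ and applying Stokes' theorem to eliminate the spatial divergence gives
\[
\int_{\RR^2} W(t,r)|\del w|^2\, dx \leq \int_{\RR^2} W(t_0, r)|\del w|^2(t_0, \cdot)\, dx + 2\int_{t_0}^t \int_{\RR^2} W(\tau,r)|F_w \del_\tau w|\, dx d\tau.
\]
Since $t_0 = 0$, at the initial slice $\chi(-r) = 0$ for all $r \geq 0$, so $W(t_0, \cdot) \equiv 1$, and the initial integral collapses to $\Ecal(t_0, w) = \Ecal_{gst}(t_0, w)$, which is exactly \eqref{eq:EE-wave}. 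The only mildly delicate point I anticipate is the sign verification of $f'$ on the transition region $[1, 2]$, but by the monotonicity of $\chi$ the two contributions happen to carry matching non-positive signs, so I expect no serious obstacle.
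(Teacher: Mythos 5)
Your proof is correct and follows essentially the same route as the paper's: the same multiplier $W\del_t w$, the same divergence identity with ghost term $-\tfrac12 f'(\tau-r)\sum_a(G_a w)^2$, the same sign verification (the paper just writes $-f'$ as the sum of the two non-negative pieces $\eta(t-r)^{-\eta-1}\chi + (1-(t-r)^{-\eta})\chi'$ rather than collecting them into $-f'$), and the same integration over $[t_0,t]\times\RR^2$.
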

\begin{proof}
Consider the $w$ equation, and take the multiplier 
$$
\big( (t-r)^{-\eta} \chi(t-r) + 1 - \chi(t-r) \big) \del_t w
$$
to have the differential identity
$$
\aligned
&{1\over 2} \del_t \Big( \big( (t-r)^{-\eta} \chi(t-r) + 1 - \chi(t-r)  \big) \big( (\del_t w)^2 + \sum_a (\del_a w)^2  \big)   \Big)
\\
- &\del_a \Big(  \big( (t-r)^{-\eta} \chi(t-r) + 1 - \chi(t-r)  \big) \del^a w \del_t w \Big)
\\
+& {\eta \over 2} (t-r)^{-\eta-1} \chi(t-r) \sum_a|G_au|^2
+ {1\over 2} \big( \chi'(t-r) - (t-r)^{-\eta} \chi'(t-r)  \big) \sum_a|G_au|^2
\\
= 
&\big( (t-r)^{-\eta} \chi(t-r) + 1 - \chi(t-r) \big) \del_t w F_w.
\endaligned
$$
We note that
$$
{\eta \over 2} (t-r)^{-\eta-1} \chi(t-r) \sum_a|G_au|^2
+ {1\over 2} \big( \chi'(t-r) - (t-r)^{-\eta} \chi'(t-r)  \big) \sum_a|G_au|^2
\geq 0,
$$
so we have
$$
\aligned
&{1\over 2} \del_t \Big( \big( (t-r)^{-\eta} \chi(t-r) + 1 - \chi(t-r)  \big) \big( (\del_t w)^2 + \sum_a (\del_a w)^2  \big)   \Big)
\\
- &\del_a \Big(  \big( (t-r)^{-\eta} \chi(t-r) + 1 - \chi(t-r)  \big) \del^a w \del_t w \Big)
\\
\leq 
&\big( (t-r)^{-\eta} \chi(t-r) + 1 - \chi(t-r) \big) \del_t w F_w.
\endaligned
$$
Integrating this inequality over the region $[t_0, t] \times \RR^2$ yields the desired result.

The proof is done.
\end{proof}

\subsection{Extra decay for Klein-Gordon components}

\begin{proposition}\label{prop:KG-extra}
Consider the Klein-Gordon equation
$$
-\Box v + v = F_v,
$$
then for $t\geq 1$ we have
\bel{eq:KG-extra1}
|v| \lesssim
{|t-r| \over \langle t \rangle} |\del \del v| + {1\over \langle t \rangle} |\del \Gamma v| + {1\over \langle t \rangle} |\del v| + |F_v|,
\qquad
\text{for } |x| \leq 3t.
\ee

\end{proposition}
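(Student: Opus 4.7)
My plan is to imitate the proof of Proposition \ref{prop:wave-extra} by exploiting the same algebraic expression \eqref{eq:wave-2expression} for the wave operator, but this time using the mass term $v$ of the Klein-Gordon equation to bring $v$ itself to the left-hand side.

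Starting from $v = F_v + \Box v$ and substituting
\[
-\Box = {(t-r)(t+r) \over t^2} \del_t\del_t + {x^a \over t^2} \del_t L_a - {1\over t} \del^a L_a + {2 \over t} \del_t - {x^a \over t^2} \del_a,
\]
I would rewrite
\[
v = F_v - \left[ {(t-r)(t+r) \over t^2} \del_t\del_t v + {x^a \over t^2} \del_t L_a v - {1\over t} \del^a L_a v + {2 \over t} \del_t v - {x^a \over t^2} \del_a v \right],
\]
and then take absolute values and bound each coefficient separately in the region $r \leq 3t$.

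In that region one has $|x^a|/t \leq r/t \lesssim 1$ and $(t+r)/t \lesssim 1$, so
\[
{(t-r)(t+r) \over t^2} \lesssim {|t-r|\over \langle t\rangle}, \qquad {|x^a| \over t^2} \lesssim {1\over \langle t \rangle}, \qquad {1 \over t} \lesssim {1\over \langle t \rangle},
\]
using $t \geq 1$ to pass from $t$ to $\langle t \rangle$. Recognising that $L_a \in V$ is a $\Gamma$ vector field, so $\del_t L_a v$ and $\del^a L_a v$ are both controlled by $|\del \Gamma v|$, and combining the remaining $\del_t v$ and $\del_a v$ terms into $|\del v|$, the five terms collapse into the four terms of \eqref{eq:KG-extra1}. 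This finishes the argument.

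There is essentially no obstacle here: the estimate is purely algebraic, and the only subtlety is keeping track of which coefficient feeds into which of the three ``loss'' terms $|t-r|/\langle t\rangle \cdot |\del\del v|$, $\langle t\rangle^{-1}|\del\Gamma v|$, and $\langle t\rangle^{-1}|\del v|$. The restriction $r \leq 3t$ is used precisely to absorb the $(t+r)$ factor in the coefficient of $\del_t\del_t v$ and to control $|x^a|/t$ uniformly, which is why the analogous statement outside the light cone would need a different formulation.
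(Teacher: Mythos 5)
Your proposal is correct and follows essentially the same route as the paper: substitute the expansion \eqref{eq:wave-2expression} of $-\Box$ in terms of $\del_t\del_t$, $\del L_a$, and $\del$, move $v$ to one side using the equation $-\Box v + v = F_v$, and bound the coefficients in the region $|x|\leq 3t$ using $t\geq 1$. Nothing differs materially from the paper's argument.
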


\begin{proof}
This phenomena was detected in \cite{Hormander, Klainerman93}. The following proof can be found in \cite{Ma19} in the light cone. Here we give a generalization in a larger region of spacetime.
Recall the expression of the wave operator in \eqref{eq:wave-2expression}, we find
$$
\aligned
-\Box v + v 
= 
{(t-|x|) (t+ |x|) \over t^2} \del_t\del_t v
+ {x^a \over t^2} \del_t L_a v
- {1\over t} \del^a L_a v
+ {2 \over t} \del_t v
-  {x^a \over t^2} \del_a v + v,
\endaligned
$$
which leads us to
$$
|v|
\lesssim
 {|(t-|x|) (t+ |x|)| \over t^2} |\del_t\del_t v|
+ {|x^a| \over t^2} |\del_t L_a v|
+ {1\over t} |\del^a L_a v|
+ {2 \over t} |\del_t v|
+  {|x^a| \over t^2} |\del_a v|
+ |F_v|.
$$
If $|x| \leq 3t$, we further have
$$
|v|
\lesssim
{|t-|x|| \over t} |\del_t\del_t v|
+ \sum_a {1 \over t} |\del L_a v|
+ {1 \over t} |\del v|
+ |F_v|,
$$
which finishes the proof.
\end{proof}

We recall the smooth and increasing function $\chi$ defined in \eqref{eq:cut-off}.

\begin{proposition}\label{prop:KG-extra2}
Consider the Klein-Gordon equation
$$
-\Box v + v = F_v,
$$
then for all $\eta \geq 0$ we have
\bel{eq:KG-extra2}
\aligned
&\int_{\RR^2} \chi (r-t) \langle r-t \rangle^{2\eta} \big( (\del v)^2 + v^2 \big) \, dx
\\
+ & \int_0^t\int_{\RR^2} \Big( \chi'(r-t) \langle r-t \rangle^{2\eta}
+ \eta \chi(r-t)  \langle r-t \rangle^{2\eta-2} (r-t)\Big)  v^2  \, dx\, d\tau
\\
\lesssim
&\big\| \langle r \rangle^{ \eta} \del v (0, \cdot) \big\|^2 + \big\| \langle r \rangle^{ \eta}  v(0, \cdot) \big\|^2
+
\int_0^t \int_{\RR^2} \chi(r-\tau) \langle r-\tau \rangle^{2\eta} \big| F_v \del_t v \big| \, dxd\tau.
\endaligned
\ee
\end{proposition}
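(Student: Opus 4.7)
The plan is to adapt the multiplier argument used in Proposition \ref{prop:wave-extra2} to the Klein-Gordon setting, carefully tracking the mass term, which is what generates the additional coercive $v^2$ contribution claimed in the statement. The pure-wave part of the computation is essentially a repeat of the earlier proof; the only genuinely new piece is bookkeeping for $v\del_tv$.

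First, I would apply the multiplier $\chi(r-t)\la r-t\ra^{2\eta}\del_t v$ to the equation $-\Box v + v = F_v$. Writing $W := \chi(r-t)\la r-t\ra^{2\eta}$, the mass contribution is $Wv\del_t v = \tfrac{1}{2}\del_t(Wv^2) - \tfrac{1}{2}(\del_t W)v^2$, while the wave part is treated exactly as in Proposition \ref{prop:wave-extra2}. The key algebraic observation is that
\[
\del_t W = -A, \qquad \del_a W = A\,\tfrac{x_a}{r}, \qquad A := \chi'(r-t)\la r-t\ra^{2\eta} + 2\eta\,\chi(r-t)\la r-t\ra^{2\eta-2}(r-t),
\]
and $A\geq 0$ because $\chi'\geq 0$ and $r-t\geq 1$ on $\mathrm{supp}\,\chi(r-\cdot)$.

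Next, I would collect all terms. The quadratic contributions in $\del v$ coming from $-\tfrac12\del_t W\bigl((\del_tv)^2+\sum_a(\del_av)^2\bigr)$ and the cross term $\sum_a \del_a W\,\del_tv\,\del_av = \sum_a A\tfrac{x_a}{r}\del_tv\,\del_av$ combine, via $G_a v = \del_a v + \tfrac{x_a}{r}\del_tv$, into $\tfrac{1}{2}A\sum_a|G_a v|^2$, and the mass piece contributes $\tfrac{1}{2}A v^2$. This yields the identity
\[
\tfrac{1}{2}\del_t\!\Big(W\bigl((\del v)^2 + v^2\bigr)\Big) - \sum_a\del_a(W\,\del_tv\,\del_av) + \tfrac{1}{2}A\Big(\sum_a|G_a v|^2 + v^2\Big) = W\del_tv\,F_v.
\]

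Finally, I would integrate this identity over $[0,t]\times\RR^2$, use Stokes' formula to eliminate the spatial divergence (using sufficient decay at spatial infinity), drop the nonnegative spacetime integral $\tfrac12\int_0^t\!\int_{\RR^2}A\sum_a|G_a v|^2\,dxd\tau$, and bound the right-hand side by $\int_0^t\!\int_{\RR^2}\chi(r-\tau)\la r-\tau\ra^{2\eta}|F_v\,\del_t v|\,dxd\tau$. The surviving coercive integral $\tfrac{1}{2}\int_0^t\!\int_{\RR^2} A\,v^2\,dxd\tau$ is exactly the extra $v^2$ spacetime integral appearing in \eqref{eq:KG-extra2} (with the factor $\tfrac12$ absorbed into $\lesssim$), and the boundary term at $t=0$ produces the initial-data contribution $\|\la r\ra^\eta \del v(0,\cdot)\|^2 + \|\la r\ra^\eta v(0,\cdot)\|^2$. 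No step is truly hard; the only subtlety is the sign check on $A$ and the algebraic recognition of the good-derivative square, and the only novelty over Proposition \ref{prop:wave-extra2} is that the mass term $+v$ in the equation supplies an additional coercive $v^2$ integrand, which is precisely why the Klein-Gordon weighted estimate controls more than its wave counterpart.
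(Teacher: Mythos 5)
Your proposal is correct and follows essentially the same route as the paper: multiply by $\chi(r-t)\la r-t\ra^{2\eta}\del_t v$, recognize that the quadratic cross terms assemble into $\tfrac12 A\sum_a|G_a v|^2$, retain the coercive $\tfrac12 A v^2$ spacetime integral coming from the mass term, drop the nonnegative $|G_a v|^2$ piece, and integrate over $[0,t]\times\RR^2$. You have merely written out the algebra (the computation of $\del_t W$, $\del_a W$, and the completion-of-the-square into $G_a v$) that the paper leaves implicit by pointing to Proposition~\ref{prop:wave-extra2}.
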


\begin{proof}
The proof is almost the same as the proof of Proposition \ref{prop:wave-extra2}. We choose
$$
\chi(r-t) \langle r-t \rangle^{2\eta} \del_t v
$$
to be the multiplier, and we derive the identity
$$
\aligned
&{1\over 2} \del_t \Big( \chi(r-t) \langle r-t \rangle^{2\eta} \big( (\del_t v)^2 + \sum_a (\del_a v)^2 + v^2 \big) \Big)
-
\del_a \big( \chi(r-t)  \langle r-t \rangle^{2\eta} \del^a v \del_t v \big)
\\
+
&\frac{1}{2}\big(\chi'(r-t) \langle r-t \rangle^{2\eta} + 2\eta \chi(r-t)  \langle r-t \rangle^{2\eta-2} (r-t)\big) \Big(\sum_a|G_a v| + v^2\Big)
\\
=
&\chi(r-t)  \langle r-t \rangle^{2\eta} F_v \del_t v.
\endaligned
$$
We observe that
$$
\big(\chi'(r-t) \langle r-t \rangle^{2\eta} + 2\eta \chi(r-t)  \langle r-t \rangle^{2\eta-2} (r-t)\big) \sum_a|G_a v|\geq 0.
\\
$$
Then we are led to the desired energy estimates \eqref{eq:KG-extra2} by integrating the above identity over the spacetime region $[0, t] \times \RR^2$.

The proof is complete.
\end{proof}


\section{Global existence}\label{sec:global-existence}

\subsection{Solution space and solution mapping}

In many cases, to prove the global existence of a nonlinear system one relies on a bootstrap argument. In our case, due to the utilisation of the Klainerman-Sobolev inequality without scaling vector field stated in Proposition \ref{prop:K-S}, where one requires the future information till time $2t$ when deriving the pointwise estimate for the function at time $t$, we turn to the aid of the contraction mapping theorem, and thus an iteration procedure. For easy readability, we will use capital letters (like $\Psi, V$) to denote Klein-Gordon components, while small letters (like $\phi, u$) are used to represent wave components.

We first define the solution space $X$ with some small $0< \delta \ll 1$ (recall the regularity index $N\geq 14$ below). We recall the cut-off function $\chi$, which is smooth and increasing, defined in \eqref{eq:cut-off}
$$
\chi(s):=
\left\{
\begin{array}{lll}
0, & \quad s\leq 1,
\\
1, &\quad s \geq 2.
\end{array}
\right.
$$

\begin{definition}\label{def:X}
Let $\Psi = \Psi (t, x), \phi = \phi (t, x)$ be sufficiently regular functions, in which $\Psi$ is an $\RR^2$-valued function while $\phi$ is a scalar-valued function, and we say $(\Psi, \phi)$ belongs to the metric space $X$ if
\bei
\item It satisfies
\be
\big( \Psi, \del_t \Psi, \phi, \del_t \phi \big) (0, \cdot)
= \big(E_0, E_1, n_0, n_1\big). 
\ee

\item It satisfies
\be
\big\| (\Psi, \phi ) \big\|_X
\leq C_1 \eps,
\ee
in which $C_1 \gg 1$ is some big constant to be determined, the size of the initial data $\eps \ll 1$ is small enough such that $C_1 \eps \ll 1$, and the norm $\| \cdot \|_X$ for a pair of $\RR^2\times \RR$-valued functions $(V, u)$ is defined by (with $0<\delta\ll 1$)
\bel{eq:X-norm}
\aligned
\big\| (V, u) \big\|_X
:=
&\sup_{t \geq 0,\, |I| \leq N+1} \langle t \rangle^{-\delta} \big( \big\| \Gamma^I u \big\| + \Ecal_{gst, 1} ( t, \Gamma^I V)^{1/2} \big) 
\\
+
&\sup_{t \geq 0,\, |I| \leq N+1} \langle t \rangle^{-\delta/2} \Big(\int_0^t \Big\| {\Gamma^I V \over \langle\tau\rangle^{\delta/2} \langle \tau-r \rangle^{1/2+\delta/2}} \Big\|^2 \, d\tau \Big)^{1/2}
\\
+
&\sup_{t \geq 0,\, |I| \leq N} \langle t \rangle^{-\delta}  \Big\| {\langle t+r \rangle \over \langle t-r \rangle} \Gamma^I V \Big\|
\\
+
&\sup_{t \geq 0,\, |I| \leq N-2} \Ecal_{gst} ( t, \Gamma^I u)^{1/2} + \sup_{t \geq 0,\, |I| \leq N-1} \Ecal_{gst, 1} ( t, \Gamma^I V)^{1/2} 
\\
+
&\sup_{t \geq 0,\, |I| \leq N} \langle t \rangle^{-\delta} \big\| \big( 1-\chi(r-2t) \big)^{1/2} \langle t-r \rangle \Gamma^I u \big\|
\\
+
&\sup_{t \geq 0,\, |I| \leq N-2}  \big\| \big( 1-\chi(r-2t) \big)^{1/2} \langle t-r \rangle^{1-\delta} \Gamma^I u \big\|
\\
+
&\sup_{t \geq 0,\, |I| \leq N-1} \langle t \rangle^{-1/2-\delta} \big\| \chi^{1/2}(r-t) \langle t-r \rangle \Gamma^I u \big\|
\\
+
&\sup_{t \geq 0,\, |I| \leq N-5} \langle t \rangle^{-\delta} \big\| \chi^{1/2}(r-t) \langle t-r \rangle \Gamma^I u \big\|
\\
+
&\sup_{t \geq 0,\, |I| \leq N-1} \langle t \rangle^{-1/2-\delta} \big(\big\| \chi^{1/2}(r-t) \langle t-r \rangle \del \Gamma^I V \big\|
+ \big\| \chi^{1/2}(r-t) \langle t-r \rangle \Gamma^I V \big\| \big)
\\
+
&\sup_{t \geq 0,\, |I| \leq N-5} \langle t \rangle^{-\delta} \big(\big\| \chi^{1/2}(r-t) \langle t-r \rangle \del \Gamma^I V \big\|
+ \big\| \chi^{1/2}(r-t) \langle t-r \rangle \Gamma^I V \big\| \big)
\\
+
&\sup_{t, r \geq 0,\, |I| \leq N-5} \langle t+r \rangle \big|\Gamma^I V \big| 
+ \sup_{t, r \geq 0,\, |I| \leq N-7} \langle t-r \rangle^{-1} \langle t+r \rangle^{2} \big|\Gamma^I V \big|
\\
+
&
\sup_{t, r \geq 0,\, |I| \leq N-8} \Big( \langle t-r \rangle^{1-\delta} \langle t+r \rangle^{1/2} \big|\Gamma^I u \big| 
+ \chi(r-t) \langle t+r\rangle^{5/4-\delta} \big|\Gamma^I V \big| \Big).
\endaligned
\ee

\eei

\end{definition}

It is easy to see that the solution space $X$ is complete with respect to the metric induced from the norm $\|\cdot\|_X$. Next, we want to construct a contraction mapping. To achieve this, we first define a solution mapping, and then prove it is also a contraction mapping by carefully choosing the size of the parameters $C_1, \eps$. We recall that $A \lesssim B$ means $A \leq C B$ with $C$ independent of $C_1, \eps$.

\begin{definition}
Given a pair of functions $(\Psi, \phi) \in X$, the solution mapping $T$ maps it to the unique pair of functions $\big(\widetilde{\Phi}, \widetilde{\phi}\big)$, which is the solution to the following linear equations
\bel{eq:solution-map}
\aligned
&-\Box \widetilde{\Psi} + \widetilde{\Psi} = - \phi \Psi,
\\
&-\Box \widetilde{\phi} = \Delta |\Psi|^2,
\\
&\big(\widetilde{\Phi}, \del_t \widetilde{\Phi}, \widetilde{\phi}, \del_t \widetilde{\phi} \big)(t_0)
=
(E_0, E_1, n_0, n_1),
\endaligned
\ee
and we will write $\big(\widetilde{\Psi}, \widetilde{\phi}\big) = T (\Psi, \phi)$.

\end{definition}



\subsection{Contraction mapping and global existence}

The goal of this part is to show the solution mapping $T$ is a contraction mapping from the solution space $X$ to $X$.

\begin{proposition}\label{prop:mapping1}
With suitably chosen large $C_1$ and small $\eps$, we have the following.
\bei
\item Given a pair of functions $(\Psi, \phi) \in X$, we have
\bel{eq:contraction1}
T (\Psi, \phi) \in X.
\ee

\item For any $(\Psi, \phi), (\Psi', \phi') \in X$, it holds
\bel{eq:contraction2}
\big\|T(\Psi, \phi) - T(\Psi', \phi') \big\|_X
\leq {1\over 2} \big\|(\Psi, \phi) - (\Psi', \phi') \big\|_X.
\ee
\eei
\end{proposition}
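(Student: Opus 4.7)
The proof splits naturally into the stability statement \eqref{eq:contraction1} and the contraction statement \eqref{eq:contraction2}. My plan is to process the $X$-norm of Definition \ref{def:X} line by line: for each piece I commute the appropriate vector fields $\Gamma^I$ through the linear system \eqref{eq:solution-map}, apply the corresponding energy estimate from Section \ref{sec:pre} or one of the weighted variants from Section \ref{sec:extra-decay}, and bound the sources $\Gamma^I(\phi\Psi)$ and $\Gamma^I(\Delta|\Psi|^2)$ using $\|(\Psi,\phi)\|_X \le C_1\eps$. All constants produced by this procedure are universal because $T$ is linear in its output, so each estimate is closed by first taking $C_1$ large to dominate the initial-data term and then $\eps$ small to dominate the terms that are multilinear in the input.

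The indispensable structural observation is that the wave source is in divergence form: $\widetilde\phi = \Delta\widetilde\phi^\Delta$ with $-\Box\widetilde\phi^\Delta = |\Psi|^2$ and data $(n_0^\Delta, n_1^\Delta)$, and the iteration preserves the analogue $\phi=\Delta\phi^\Delta$ for the input. The problematic Klein-Gordon source then becomes $-\phi\Psi = -\Delta\phi^\Delta\cdot\Psi$. Inside the cone region $\{r\le 3t\}$, Proposition \ref{prop:wave-extra} converts the Hessian of $\phi^\Delta$ into $\la t-r\ra^{-1}$ times lower-order $\Gamma$-derivatives plus a remainder, while Proposition \ref{prop:KG-extra} produces an extra $\la t-r\ra\la t\ra^{-1}$ factor on $\Psi$; the two powers of $\la t-r\ra$ cancel, effectively upgrading the $\la t+r\ra^{-1/2}$ wave decay to $\la t+r\ra^{-1}$ in products with $\Psi$. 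Outside the cone, the outer variant of Proposition \ref{prop:wave-extra} together with line 3 of \eqref{eq:X-norm} supplies the required $\la t-r\ra^{-1}$ improvement on $\phi$ directly.

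The assignment of $X$-norm lines to estimates is as follows. Lines 1--2 use the tame ghost-weight estimate \eqref{eq:gst-wKG3} with $\kappa=\delta/2$, whose right-hand side delivers simultaneously the $\la t\ra^\delta$-controlled top-order energy of $\widetilde\Psi$ and the spacetime $L^2$ gain; line 3 is a weighted Hardy-type consequence of the spacetime term; line 4 demands $\tau$-integrable source control, supplied by the pointwise bounds of lines 11--12 paired with the quadratic nature of the source; lines 5--6 follow from Proposition \ref{prop:wave-extra3}; lines 7--10 from Propositions \ref{prop:wave-extra2} and \ref{prop:KG-extra2}; line 11 is closed by Proposition \ref{prop:G1} fed with the $L^2$ bounds on $\Gamma^I(\phi\Psi)$; and line 12 combines Proposition \ref{prop:K-S} with line 4 for sharp wave decay, and Proposition \ref{prop:G1} sharpened by Proposition \ref{prop:S} in the outer region for the $\la t+r\ra^{5/4-\delta}$ Klein-Gordon decay.

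Finally, \eqref{eq:contraction2} follows from part (i) by multilinearity: the differences solve the same linear system with zero data and sources $-(\phi-\phi')\Psi - \phi'(\Psi-\Psi')$ and $\Delta\bigl((\Psi+\Psi')\cdot(\Psi-\Psi')\bigr)$, each bilinear in one difference and one $C_1\eps$-small input. Rerunning the estimates of part (i) verbatim yields $\|T(\Psi,\phi)-T(\Psi',\phi')\|_X \le C_2 C_1\eps \|(\Psi,\phi)-(\Psi',\phi')\|_X$, closed by taking $\eps$ with $C_2 C_1\eps \le 1/2$. The main obstacle I anticipate is the top-order estimate for $\widetilde\Psi$: when all of $\Gamma^I$ falls on the factor $\Delta\phi^\Delta$ one loses direct access to the Hessian--Klein-Gordon cancellation because $\Psi$ sits undifferentiated, so one must redistribute weights between the $\la t-r\ra^{-1}$ produced by Proposition \ref{prop:wave-extra} and the Klein-Gordon factor $\la t-r\ra\la t+r\ra^{-1}$ coming from line 3, partitioning the regularity budget $N\ge 14$ so that the high-high cases share the weights consistently.
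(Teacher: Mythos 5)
Your high-level plan is right (walk through the $X$-norm line by line, use ghost-weight energy estimates, exploit $\widetilde\phi=\Delta\widetilde\phi^\Delta$) and so is the reduction of \eqref{eq:contraction2} to bilinearity. But the mechanism you propose for the top-order Klein-Gordon source $\Gamma^I(\phi\Psi)$, $|I|\le N+1$, is not the one the paper uses and it does not close.

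You want to write $\phi\Psi=\Delta\phi^\Delta\cdot\Psi$ and play the Hessian decay of $\phi^\Delta$ against the $\la t-r\ra\la t+r\ra^{-1}$ gain of $\Psi$. Two problems. First, the solution space $X$ (Definition \ref{def:X}) only tracks $(\Psi,\phi)$, not $\phi^\Delta$; the iterates are \emph{arbitrary} elements of $X$, so the bound $T(\Psi,\phi)\in X$ must be proved from $\|(\Psi,\phi)\|_X\le C_1\eps$ alone. Even if the eventual fixed point happens to have $\phi=\Delta\phi^\Delta$, you cannot invoke energies of $\phi^\Delta$ without augmenting $X$ — which would change the proposition. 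The paper invokes the $\Delta\widetilde\phi^\Delta$ structure only for the \emph{output} wave field $\widetilde\phi$ (Lemmas \ref{lem:est-020}, \ref{lem:est-003}, \ref{lem:est-010}, \ref{lem:est-100}), never for the input of the KG source. Second, even granting access to $\phi^\Delta$, your own ``main obstacle'' paragraph does not resolve the top-order imbalance: when $\Gamma^I$ lands entirely on $\Psi$, $\phi^\Delta$ is low order (fine), but the KG-extra decay of Proposition \ref{prop:KG-extra} on $\Gamma^I\Psi$ would require $\del\del\Gamma^I\Psi$, i.e.\ $\Gamma^{N+2}$-level control, out of range; when $\Gamma^I$ lands on $\phi^\Delta$, the Hessian estimate of Proposition \ref{prop:wave-extra} needs $\del\Gamma^{I+1}\phi^\Delta$, again out of range. ``Redistributing weights'' is not a fix: the direct product bound $\|\Gamma^{I_1}\phi\|_{L^\infty}\|\Gamma^{I_2}\Psi\|$ with $|\Gamma^{I_1}\phi|\lesssim\la t-r\ra^{-1+\delta}\la t+r\ra^{-1/2}$ and $\|\Gamma^{I_2}\Psi\|\lesssim\la t\ra^\delta$ gives only $\la t\ra^{-1/2+\delta}$, which is not $\tau$-integrable and overshoots even the allowed $\la t\ra^\delta$ growth.

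The paper's actual closing device for this term is Lemma \ref{lem:est-001}: it never touches $\phi^\Delta$. Instead it splits as in \eqref{eq:est-001a} into $|I_2|\le N-5$ (handled with the global $\la t+r\ra^{-1}$ decay of $\Psi$ from line 12) and $|I_1|\le N-8$, where the key move is
\[
\big\|\Gamma^{I_1}\phi\,\Gamma^{I_2}\Psi\big\|
\le\big\|\la t\ra^{\delta/2}\la t-r\ra^{1/2+\delta/2}\Gamma^{I_1}\phi\big\|_{L^\infty}\cdot
\Big\|\frac{\Gamma^{I_2}\Psi}{\la t\ra^{\delta/2}\la t-r\ra^{1/2+\delta/2}}\Big\|,
\]
whose second factor is controlled in $L^2_\tau L^2_x$ by the \emph{spacetime} ghost-weight integral (line 2 of \eqref{eq:X-norm}, supplied via \eqref{eq:gst-wKG3}), after Cauchy–Schwarz in $\tau$. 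This pairing of the $\phi$-pointwise decay with the ghost-weight bulk term for the undifferentiated factor $\Psi$ — not a Hessian re-expansion of $\phi$ — is the idea that closes the top-order energy. You mention the spacetime $L^2$ gain in passing when assigning lines 1--2 to \eqref{eq:gst-wKG3}, but you do not use it where it is needed; this is the gap to fill.

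Two smaller misattributions worth fixing: line 3 of \eqref{eq:X-norm} is obtained in Lemma \ref{lem:est-005a} directly from Proposition \ref{prop:KG-extra}, not a Hardy inequality; and line 5 uses the Hessian estimate of Proposition \ref{prop:wave-extra} (Lemma \ref{lem:est-010}), with Proposition \ref{prop:wave-extra3} only entering for line 6 (Lemma \ref{lem:est-100}).
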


We rewrite \eqref{eq:solution-map} to take advantage of the special structure (i.e., a hidden divergence form structure) of the nonlinearities appearing in the wave equation of $\widetilde{\phi}$, which read
\bel{eq:solution-map1}
\aligned
&-\Box \widetilde{\Psi} + \widetilde{\Psi} = - \phi \Psi,
\\
&-\Box \widetilde{\phi}^\Delta =  |\Psi|^2,
\qquad\qquad
\widetilde{\phi} = \Delta \widetilde{\phi}^\Delta,
\\
&\big(\widetilde{\Phi}, \del_t \widetilde{\Phi}, \widetilde{\phi}^\Delta, \del_t \widetilde{\phi}^\Delta \big)(t_0)
=
(E_0, E_1, n_0^\Delta, n_1^\Delta).
\endaligned
\ee
We note this kind of reformulation has been used before; see for instance \cite{Katayama12a}.
To estimate higher-order energy, we act $\Gamma^I$ to \eqref{eq:solution-map1} to get
\bel{eq:solution-map2}
\aligned
&-\Box \Gamma^I \widetilde{\Psi} + \Gamma^I \widetilde{\Psi} = - \Gamma^I \big(\phi \Psi\big),
\\
&-\Box \Gamma^I \widetilde{\phi}^\Delta = \Gamma^I \big(|\Psi|^2\big).
\endaligned
\ee

In the sequel, we will write $\big(\widetilde{\Psi}, \widetilde{\phi}\big) = T (\Psi, \phi)$.

\begin{lemma}\label{lem:est-00a}
If $(\Psi, \phi)$ lies in the solution space $X$, then the following estimates hold
\bel{eq:est-00a}
\aligned
{\la t+r \ra \over \la t-r\ra} \big|  \Gamma^I \Psi \big|
\leq
C_1 \eps \la t+r\ra^{-1},
\qquad
|I| \leq N-7,
\\
\la t-r \ra \big|  \Gamma^I \phi \big|
\leq
C_1 \eps \la t+r\ra^{-1/2+\delta},
\qquad
|I| \leq N-8,
\\
{\la t-r \ra \over \la t+r\ra} \big|  \Gamma^I \phi \big|
\leq
C_1 \eps \la t+r\ra^{-3/2+\delta},
\qquad
|I| \leq N-8.
\endaligned
\ee
\end{lemma}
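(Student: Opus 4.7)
The plan is to read all three estimates off directly from the definition of the norm $\|\cdot\|_X$ in Definition \ref{def:X}, since by hypothesis $\|(\Psi,\phi)\|_X \leq C_1\eps$. The lemma is essentially a rearrangement of the pointwise ingredients already built into the space $X$.

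First, for the estimate on $\Psi$ with $|I|\leq N-7$, the relevant summand of $\|\cdot\|_X$ is
$$
\sup_{t,r\geq 0,\, |I|\leq N-7}\langle t-r\rangle^{-1}\langle t+r\rangle^{2}\big|\Gamma^I\Psi\big|\leq C_1\eps,
$$
which rewritten gives $|\Gamma^I\Psi|\leq C_1\eps\,\langle t-r\rangle\langle t+r\rangle^{-2}$. Multiplying both sides by $\langle t+r\rangle/\langle t-r\rangle$ yields exactly the first claim.

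Next, for the two estimates on $\phi$ with $|I|\leq N-8$, I would invoke the pointwise term
$$
\sup_{t,r\geq 0,\,|I|\leq N-8}\langle t-r\rangle^{1-\delta}\langle t+r\rangle^{1/2}\big|\Gamma^I\phi\big|\leq C_1\eps,
$$
i.e.\ $|\Gamma^I\phi|\leq C_1\eps\,\langle t-r\rangle^{-1+\delta}\langle t+r\rangle^{-1/2}$. Multiplying by $\langle t-r\rangle$, respectively by $\langle t-r\rangle/\langle t+r\rangle$, gives
$$
\langle t-r\rangle|\Gamma^I\phi|\leq C_1\eps\,\langle t-r\rangle^{\delta}\langle t+r\rangle^{-1/2},\qquad \frac{\langle t-r\rangle}{\langle t+r\rangle}|\Gamma^I\phi|\leq C_1\eps\,\langle t-r\rangle^{\delta}\langle t+r\rangle^{-3/2}.
$$
The only remaining observation is the trivial inequality $\langle t-r\rangle\leq\langle t+r\rangle$, so that $\langle t-r\rangle^{\delta}\leq\langle t+r\rangle^{\delta}$; this converts the two right-hand sides above into $C_1\eps\langle t+r\rangle^{-1/2+\delta}$ and $C_1\eps\langle t+r\rangle^{-3/2+\delta}$ respectively, which are the second and third stated bounds.

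There is no real obstacle here — the content of the lemma is purely bookkeeping, meant to record in a clean form the pointwise bounds we will repeatedly feed into the nonlinear estimates in the sequel. The choice of indices $N-7$ and $N-8$ matches the two pointwise terms in the $X$-norm, and the $\delta$ loss appearing on the right-hand side is exactly the $\delta$ loss built into the $X$-norm for the wave pointwise bound.
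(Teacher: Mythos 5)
Your argument is correct and is exactly what the paper intends: the lemma is stated without proof precisely because each of the three bounds is a direct reformulation of the pointwise terms in the $X$-norm (the $|I|\leq N-7$ piece $\langle t-r\rangle^{-1}\langle t+r\rangle^{2}|\Gamma^I V|$ for the Klein--Gordon part and the $|I|\leq N-8$ piece $\langle t-r\rangle^{1-\delta}\langle t+r\rangle^{1/2}|\Gamma^I u|$ for the wave part), combined with the elementary inequality $\langle t-r\rangle\leq\langle t+r\rangle$ for $t,r\geq 0$. Nothing is missing, and the constants come out exactly as $C_1\eps$ with no implicit absorption.
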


\begin{lemma}\label{lem:est-001}
Let $(\Psi, \phi)$ lie in the solution space $X$, then we have
\bel{eq:est-001}
\aligned
\Ecal_{gst, 1} (t, \Gamma^I \widetilde{\Psi})^{1/2}
\lesssim
\eps + (C_1 \eps)^{3/2} \langle t \rangle^\delta,
\qquad
|I| \leq N+1,
\endaligned
\ee
\bel{eq:est-002}
\Big(\int_0^t \Big\| {\Gamma^I \widetilde{\Psi} \over \langle\tau\rangle^{\delta/2} \langle \tau-r \rangle^{1/2+\delta/2}} \Big\|^2 \, d\tau \Big)^{1/2}
\lesssim
\eps + (C_1 \eps)^{3/2} \langle t \rangle^{\delta/2},
\qquad
|I| \leq N+1.
\ee
\end{lemma}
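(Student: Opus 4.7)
The plan is to apply the ghost-weight energy estimates to $\Gamma^I \widetilde{\Psi}$, which satisfies the linear Klein-Gordon equation in \eqref{eq:solution-map2}, and to control the nonlinear source $\Gamma^I(\phi\Psi)$ using the $X$-norm bounds on $(\Psi,\phi)$ together with the pointwise bounds of Lemma \ref{lem:est-00a}. For \eqref{eq:est-001}, I would invoke \eqref{eq:gst-wKG2} with $m=1$ to obtain
\begin{equation*}
\Ecal_{gst,1}(t,\Gamma^I \widetilde{\Psi})^{1/2}\lesssim \eps + \int_0^t \|\Gamma^I(\phi\Psi)\|\,d\tau,
\end{equation*}
with the $\eps$ coming from the smallness hypothesis on the initial data. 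Everything then reduces to showing $\int_0^t \|\Gamma^I(\phi\Psi)\|\,d\tau \lesssim (C_1\eps)^{3/2}\la t\ra^\delta$.

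Next, expand $\Gamma^I(\phi\Psi)$ by Leibniz as a sum of $\Gamma^{I_1}\phi \cdot \Gamma^{I_2}\Psi$ with $|I_1|+|I_2|\leq N+1$. Since $N\geq 14$, each term satisfies either $|I_2|\leq N-5$ (so the pointwise Klein-Gordon bound $|\Gamma^{I_2}\Psi|\lesssim C_1\eps\la t+r\ra^{-1}$ applies) or $|I_1|\leq 5\leq N-8$ (so the pointwise wave bounds of Lemma \ref{lem:est-00a} apply). In the first case, pairing this pointwise bound with $\|\Gamma^{I_1}\phi\|\lesssim C_1\eps\la\tau\ra^\delta$ from the $X$-norm yields $(C_1\eps)^2\la\tau\ra^{-1+\delta}$, integrable in $\tau$. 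In the second case, if additionally $|I_2|\leq N$, I would pair the weighted pointwise bound $\frac{\la t-r\ra}{\la t+r\ra}|\Gamma^{I_1}\phi|\lesssim C_1\eps\la\tau\ra^{-3/2+\delta}$ with the $X$-norm bound $\big\|\frac{\la t+r\ra}{\la t-r\ra}\Gamma^{I_2}\Psi\big\|\lesssim C_1\eps\la\tau\ra^\delta$ to produce the integrable $(C_1\eps)^2\la\tau\ra^{-3/2+2\delta}$.

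The delicate case is $|I_2|=N+1$ (forcing $|I_1|=0$), where no pointwise bound on $\Gamma^{N+1}\Psi$ is available. Here I would write $\phi\cdot\Gamma^{I_2}\Psi=(\phi w)\cdot(\Gamma^{I_2}\Psi/w)$ with $w=\la\tau\ra^{\delta/2}\la\tau-r\ra^{1/2+\delta/2}$, apply Cauchy-Schwarz in $\tau$, bound the first factor in $L^\infty_x$ using $|\phi|\la\tau-r\ra\lesssim C_1\eps\la\tau+r\ra^{-1/2+\delta}$ (the supremum over $x$ being attained near $r\sim\tau$, producing a time-integrable $\la\tau\ra^{-1/2+C\delta}$), and control the second factor by the weighted spacetime $L^2$ norm built into the $X$-norm. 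This is the principal obstacle: it is the only place where a genuine top-order $L^2$ estimate on $\Psi$ must be used unaccompanied by a pointwise bound, and the resolution hinges on the ghost-weight KG bound \eqref{eq:gst-wKG3} being encoded in the $X$-norm so as to offset the $\la\tau-r\ra^{-1}$ singularity of $\phi$ at the light cone.

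Finally, for \eqref{eq:est-002} I would apply \eqref{eq:gst-wKG3} with $\kappa=\delta$ to $\Gamma^I\widetilde{\Psi}$, yielding
\begin{equation*}
\int_0^t\Big\|\frac{\Gamma^I\widetilde{\Psi}}{\la\tau\ra^{\delta/2}\la\tau-r\ra^{1/2+\delta/2}}\Big\|^2 d\tau\lesssim\eps^2+\int_0^t\la\tau\ra^{-\delta}\|\Gamma^I(\phi\Psi)\|\,\|\del_t\Gamma^I\widetilde{\Psi}\|\,d\tau,
\end{equation*}
and then use the just-established bound on $\Ecal_{gst,1}(t,\Gamma^I\widetilde{\Psi})^{1/2}$ to control $\|\del_t\Gamma^I\widetilde{\Psi}\|$, together with the same case analysis for $\|\Gamma^I(\phi\Psi)\|$, absorbing the extra factor $\la\tau\ra^{-\delta}$ to compensate the minor loss in powers of $\la t\ra$ from the Cauchy-Schwarz step in Case C.
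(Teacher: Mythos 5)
Your proposal is correct and follows essentially the same route as the paper. The only structural difference is that you split the Leibniz expansion into three cases, whereas the paper uses only two: it handles \emph{all} of $|I_1|\le N-8$ (including the intermediate range $N-4\le|I_2|\le N$) via the single pairing $\|\la t\ra^{\delta/2}\la t-r\ra^{1/2+\delta/2}\Gamma^{I_1}\phi\|_{L^\infty}\cdot\|\Gamma^{I_2}\Psi/(\la t\ra^{\delta/2}\la t-r\ra^{1/2+\delta/2})\|_{L^2}$ together with a Cauchy--Schwarz in $\tau$ and the ghost-weight spacetime $L^2$ norm built into $X$, so your second case ($|I_1|\le5$, $|I_2|\le N$, paired as $\frac{\la t-r\ra}{\la t+r\ra}\Gamma^{I_1}\phi$ against $\frac{\la t+r\ra}{\la t-r\ra}\Gamma^{I_2}\Psi$) is sound but redundant --- it is already subsumed by your third case, since the ghost-weight $L^2_{t,x}$ bound in $X$ is available for all $|I|\le N+1$, not only the top order. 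Apart from that, the key mechanism you identify (the $\la\tau-r\ra^{-1/2-\delta/2}$ weight in the spacetime $L^2$ norm offsetting the $\la\tau-r\ra^{-1}$ singularity of $\phi$ near the light cone) is exactly what the paper exploits, and your treatment of \eqref{eq:est-002} via \eqref{eq:gst-wKG3} with $\kappa=\delta$, an additional Cauchy--Schwarz in $\tau$, and the freshly-established bound on $\|\del_t\Gamma^I\widetilde\Psi\|$, matches the paper's argument.
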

\begin{proof}
We first show \eqref{eq:est-001}. Consider \eqref{eq:solution-map2} and apply the ghost weight energy estimates \eqref{eq:gst-wKG2}, and for $|I|\leq N+1$ we find
$$
\aligned
\Ecal_{gst, 1} (t, \Gamma^I \widetilde{\Psi})^{1/2}
\lesssim
\Ecal_{gst, 1} (t_0, \Gamma^I \widetilde{\Psi})^{1/2}
+
\int_0^t \big\| \Gamma^I \big(\phi \Psi\big) (\tau) \big\| \, d\tau.
\endaligned
$$
Recall that Leibniz rule yields
$$
\Gamma^I \big(\phi \Psi\big)
=
\sum_{I_1 + I_2 = I} \Gamma^{I_1} \phi \, \Gamma^{I_2} \Psi,
$$
and we thus have
\bel{eq:est-001a}
\aligned
&\big\| \Gamma^I \big(\phi \Psi\big) \big\|
\\
\lesssim
&\sum_{|I_1| + |I_2| = |I|} \big\| \Gamma^{I_1} \phi \, \Gamma^{I_2} \Psi \big\|
\\
\lesssim
&\sum_{\substack{|I_2| \leq N-5\\ |I_1| + |I_2| \leq |I|}} \big\| \Gamma^{I_1} \phi \, \Gamma^{I_2} \Psi \big\|
+
\sum_{\substack{|I_1| \leq N-8\\ |I_1| + |I_2| \leq |I|}} \big\| \Gamma^{I_1} \phi \, \Gamma^{I_2} \Psi \big\|,
\endaligned
\ee
in which we used $N\geq 14$ in the last inequality.

Next, we estimate those two quantities in the above inequality. We start with
\bel{eq:001b}
\aligned
\sum_{\substack{|I_2| \leq N-5\\ |I_1| + |I_2| \leq |I|}} \big\| \Gamma^{I_1} \phi \, \Gamma^{I_2} \Psi \big\|
\lesssim
\sum_{\substack{|I_2| \leq N-5\\ |I_1|  \leq |I|}} \big\| \Gamma^{I_1} \phi \big\| \, \big\| \Gamma^{I_2} \Psi \big\|_{L^\infty}
\lesssim
(C_1 \eps)^2 t^{-1+\delta}.
\endaligned
\ee
As for the other one, we have
\bel{eq:001c}
\aligned
&\sum_{\substack{|I_1| \leq N-8\\ |I_1| + |I_2| \leq |I|}} \big\| \Gamma^{I_1} \phi \, \Gamma^{I_2} \Psi \big\|
\\
\lesssim
&\sum_{\substack{|I_1| \leq N-8\\ |I_1| + |I_2| \leq |I|}} \big\|\langle t\rangle^{\delta/2} \langle t-r \rangle^{1/2+\delta/2} \Gamma^{I_1} \phi \big\|_{L^\infty} \, \Big\| {\Gamma^{I_2} \Psi \over \langle t\rangle^{\delta/2} \langle t-r \rangle^{1/2+\delta/2}} \Big\|
\\
\lesssim
&C_1 \eps \langle t \rangle^{-1/2+\delta/2} \sum_{|I_2|\leq N+1}\Big\| {\Gamma^{I_2} \Psi \over \langle t\rangle^{\delta/2} \langle t-r \rangle^{1/2+\delta/2}} \Big\|.
\endaligned
\ee

Gathering the estimates leads us to \eqref{eq:est-001}
$$
\aligned
&\Ecal_{gst, 1} (t, \Gamma^I \widetilde{\Psi})^{1/2}
\\
\lesssim
&\eps 
+
\int_0^t \Big( (C_1 \eps)^2 \tau^{-1+\delta} + C_1 \eps \langle \tau \rangle^{-1/2+\delta/2} \sum_{|I_2|\leq N+1} \Big\| {\Gamma^{I_2} \Psi \over \langle\tau\rangle^{\delta/2} \langle \tau-r \rangle^{1/2+\delta/2}} \Big\| \Big) \, d\tau
\\
\lesssim
&\eps + (C_1 \eps)^2 \langle t \rangle^\delta
+
C_1 \eps \Big( \int_0^t \langle \tau \rangle^{-1+\delta} \, d\tau\Big)^{1/2}  \sum_{|I_2|\leq N+1} \Big( \int_0^t \Big\| {\Gamma^{I_2} \Psi \over \langle\tau\rangle^{\delta/2} \langle \tau-r \rangle^{1/2+\delta/2}} \Big\|^2 \, d\tau \Big)^{1/2}
\\
\lesssim
&\eps + (C_1 \eps)^2 \langle t \rangle^\delta.
\endaligned
$$

Finally, we want to deduce \eqref{eq:est-002}.
The ghost weight energy estimates \eqref{eq:gst-wKG3} indicate
$$
\aligned
\int_0^t \Big\| {\Gamma^I \widetilde{\Psi} \over \tau^{\delta/2} \langle \tau-r \rangle^{1/2+\delta/2}} \Big\|^2 \, d\tau 
\lesssim
\Ecal_{gst, 1} (t_0, \Gamma^I \widetilde{\Psi})
+
\int_0^t \int_{\RR^2} \langle \tau \rangle^{-\delta} \big| \Gamma^I \big(\phi \Psi\big) \del_t \Gamma^I \widetilde{\Psi} \big|(\tau, x) \, dxd\tau.
\endaligned
$$
With the estimates \eqref{eq:est-001} we just proved, we have
$$
\aligned
&\int_0^t \int_{\RR^2} \langle \tau \rangle^{-\delta} \big| \Gamma^I \big(\phi \Psi\big) \del_t \Gamma^I \widetilde{\Psi} \big|(\tau, x) \, dxd\tau.
\\
\lesssim
&
\int_0^t \langle \tau \rangle^{-\delta} \big\| \Gamma^I \big(\phi \Psi\big) \big\| \big\| \del_t \Gamma^I \widetilde{\Psi} \big\| \, d\tau
\\
\lesssim
&
\Big(\int_0^t \langle \tau \rangle^{-\delta+1} \big\| \Gamma^I \big(\phi \Psi\big) \big\|^2 \, d\tau \Big)^{1/2}
\Big( \int_0^t \langle \tau \rangle^{-\delta-1} \big\| \del_t \Gamma^I \widetilde{\Psi} \big\|^2 \, d\tau \Big)^{1/2}
\\
\lesssim
&
C_1 \eps \langle t \rangle^{\delta/2} \Big(\int_0^t \langle \tau \rangle^{-\delta+1} \big\| \Gamma^I \big(\phi \Psi\big) \big\|^2 \, d\tau \Big)^{1/2}.
\endaligned
$$
To proceed, we find
$$
\aligned
&\Big(\int_0^t \langle \tau \rangle^{-\delta+1} \big\| \Gamma^I \big(\phi \Psi\big) \big\|^2 \, d\tau \Big)^{1/2}
\\
\lesssim
&\Big( \int_0^t \Big( (C_1 \eps)^2 \langle \tau\rangle^{-1+\delta} + C_1 \eps \sum_{|I_2|\leq N+1} \Big\| {\Gamma^{I_2} \Psi \over \langle \tau \rangle^{\delta/2} \langle \tau-r \rangle^{1/2+\delta/2}} \Big\|^2 \Big) \, d\tau \Big)^{1/2}
\\
\lesssim
& (C_1 \eps)^2 \langle t \rangle^{\delta /2},
\endaligned
$$
which further gives us
$$
\aligned
\int_0^t \Big\| {\Gamma^I \widetilde{\Psi} \over \tau^{\delta/2} \langle \tau-r \rangle^{1/2+\delta/2}} \Big\|^2 \, d\tau 
\lesssim
\eps^2 + (C_1 \eps)^3 \langle t \rangle^{\delta},
\endaligned
$$
and hence \eqref{eq:est-002}.

We complete the proof.
\end{proof}

\begin{lemma}\label{lem:est-005a}
The following estimates are valid
\bel{eq:est-005a}
\aligned
\Big\| {\langle t+r \rangle \over \langle t-r \rangle} \Gamma^I \widetilde{\Psi} \Big\|
\lesssim
\eps+ (C_1 \eps)^{3/2} \langle t\rangle^\delta,
\qquad
|I| \leq N.
\endaligned
\ee
\end{lemma}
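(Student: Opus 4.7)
The plan is to split $\RR^2$ into the two regions $\{|x|\leq 3t\}$ and $\{|x|\geq 3t\}$ via a smooth cutoff and to treat them separately. In the exterior region $\{|x|\geq 3t\}$ the weight $\la t+r\ra/\la t-r\ra$ is uniformly bounded, so the desired bound reduces immediately to
$$
\Big\|\tfrac{\la t+r\ra}{\la t-r\ra}\Gamma^I\widetilde\Psi\Big\|_{L^2(|x|\geq 3t)}\lesssim \Ecal_{gst,1}(t,\Gamma^I\widetilde\Psi)^{1/2}\lesssim \eps+(C_1\eps)^{3/2}\la t\ra^\delta,
$$
which is already supplied by Lemma \ref{lem:est-001} since $|I|\leq N\leq N+1$. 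In the interior $\{|x|\leq 3t\}$ the weight is in general large and must be compensated using the Klein--Gordon extra-decay property of Proposition \ref{prop:KG-extra}.

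\textbf{The interior region.} Since $\Gamma^I\widetilde\Psi$ solves $-\Box\Gamma^I\widetilde\Psi+\Gamma^I\widetilde\Psi=-\Gamma^I(\phi\Psi)$, Proposition \ref{prop:KG-extra} yields on $\{r\leq 3t\}$
$$
|\Gamma^I\widetilde\Psi|\lesssim \tfrac{|t-r|}{\la t\ra}|\del\del\Gamma^I\widetilde\Psi|+\tfrac{1}{\la t\ra}\sum_{|J|\leq |I|+1}|\del\Gamma^J\widetilde\Psi|+|\Gamma^I(\phi\Psi)|.
$$
Multiplying by $\la t+r\ra/\la t-r\ra$ and invoking $\la t+r\ra\lesssim\la t\ra$ on $\{r\leq 3t\}$ together with $|t-r|/\la t-r\ra\leq 1$ and $\la t-r\ra\geq 1$, the singular weight is absorbed in the first three terms and only survives on the nonlinearity:
$$
\tfrac{\la t+r\ra}{\la t-r\ra}|\Gamma^I\widetilde\Psi|\lesssim |\del\del\Gamma^I\widetilde\Psi|+\sum_{|J|\leq |I|+1}|\del\Gamma^J\widetilde\Psi|+\tfrac{\la t+r\ra}{\la t-r\ra}|\Gamma^I(\phi\Psi)|.
$$
To handle the second-derivative piece in $L^2$, I would trade $\del_t^2\Gamma^I\widetilde\Psi$ against $\Delta\Gamma^I\widetilde\Psi-\Gamma^I\widetilde\Psi-\Gamma^I(\phi\Psi)$ via the equation, and trade $\del_a\del_b\Gamma^I$ against $\del\Gamma^J$ with $|J|\leq|I|+1$ by means of the commutator identities \eqref{eq1-10-10-2021}--\eqref{eq2-10-10-2021}. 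This reduces everything to quantities already controlled by Lemma \ref{lem:est-001}, plus the unweighted $\|\Gamma^I(\phi\Psi)\|$ already estimated inside the proof of that lemma.

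\textbf{The weighted nonlinearity.} It remains to bound $\bigl\|\tfrac{\la t+r\ra}{\la t-r\ra}\Gamma^I(\phi\Psi)\bigr\|$. By Leibniz, every summand is of the form $\Gamma^{I_1}\phi\,\Gamma^{I_2}\Psi$ with $|I_1|+|I_2|\leq N$, and for $N\geq 14$ at least one of $|I_2|\leq N-7$ or $|I_1|\leq N-8$ holds. In the first case, Lemma \ref{lem:est-00a} gives $\tfrac{\la t+r\ra}{\la t-r\ra}|\Gamma^{I_2}\Psi|\lesssim C_1\eps\la t\ra^{-1}$ pointwise, and pairing with the energy bound $\|\Gamma^{I_1}\phi\|\lesssim C_1\eps\la t\ra^\delta$ yields a contribution of order $(C_1\eps)^2\la t\ra^{-1+\delta}$. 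In the second case, Lemma \ref{lem:est-00a} gives $|\Gamma^{I_1}\phi|\lesssim C_1\eps\la t+r\ra^{-1/2+\delta}\la t-r\ra^{-1}$; pulling this out as an $L^\infty$-factor and rewriting the residual weight as $\la t\ra^{-1/2+\delta}\cdot\tfrac{\la t+r\ra}{\la t-r\ra}$ (using $\la t+r\ra\geq\la t\ra$ and $\la t-r\ra\geq 1$), one invokes the $X$-norm bound $\bigl\|\tfrac{\la t+r\ra}{\la t-r\ra}\Gamma^{I_2}\Psi\bigr\|\lesssim C_1\eps\la t\ra^\delta$, which is available on the input $\Psi$ because $|I_2|\leq N$; this yields a contribution of order $(C_1\eps)^2\la t\ra^{-1/2+2\delta}$. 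Both contributions are absorbed into $(C_1\eps)^{3/2}\la t\ra^\delta$ for $\eps$ small enough.

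\textbf{Main obstacle.} The delicate point is Case 2 of the nonlinearity: the weight $\la t+r\ra/\la t-r\ra$ cannot be controlled from pointwise decay of $\Psi$ alone, and must be reinvested through the very $X$-norm quantity that we are estimating, now applied to the input $\Psi$ rather than to the output $\widetilde\Psi$. This is precisely the reason for setting up the argument as a contraction-mapping scheme (Definition \ref{def:X}) rather than as a classical bootstrap, and for the $(C_1\eps)^{3/2}\la t\ra^\delta$ excess appearing in the target estimate \eqref{eq:est-005a} in place of $\eps$.
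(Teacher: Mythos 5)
Your proposal is correct and follows essentially the same route the paper takes: split into an exterior region where the weight $\la t+r\ra/\la t-r\ra$ is bounded and the energy bound of Lemma~\ref{lem:est-001} suffices, and an interior region where Proposition~\ref{prop:KG-extra} converts $\la t+r\ra/\la t-r\ra\,|\Gamma^I\widetilde\Psi|$ into derivative terms plus the nonlinearity. The paper's proof is extremely terse (a single sentence for the interior region, citing only Lemma~\ref{lem:est-001} and Proposition~\ref{prop:KG-extra}), and in particular it does not explicitly address the surviving weighted nonlinearity $\bigl\|\la t+r\ra\la t-r\ra^{-1}\Gamma^I(\phi\Psi)\bigr\|$; you correctly identify this as the genuinely nontrivial step and supply the missing estimate, including the key observation that in the case $|I_1|\leq N-8$, $|I_2|\leq N$ one must reinvoke the $X$-norm bound on the \emph{input} $\Psi$ (valid at order $N$), which is precisely what the contraction-mapping setup permits. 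Two cosmetic remarks: since $\del_\alpha\in V$, the Hessian term is already of the form $\del\Gamma^J\widetilde\Psi$ with $|J|=|I|+1\leq N+1$ and no trade via the equation or the commutator identities is needed; and your exterior cutoff $r\geq 3t$ versus the paper's $r\geq 2t$ is an inconsequential difference, as both make the weight uniformly bounded while keeping the interior inside the domain $|x|\leq 3t$ of Proposition~\ref{prop:KG-extra}.
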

\begin{proof}
In the region $\{ r\geq 2t \}$, it holds $\langle t+r \rangle \lesssim \langle t-r \rangle$, thus we only need to consider the region $\{ r\leq 2t \}$. Recall that Lemma \ref{lem:est-001} provides us with
$$
\big\| \del \Gamma^J \widetilde{\Psi} \big\|
+
\big\| \Gamma^J \widetilde{\Psi} \big\|
\lesssim 
\eps + (C_1 \eps)^2 \langle t\rangle^\delta,
\qquad
|I| \leq N+1.
$$ 
Thus the proof for the region $\{ r\leq 2t \}$ follows from Lemma \ref{lem:est-001} and Proposition \ref{prop:KG-extra}.
\end{proof}

\begin{lemma}\label{lem:est-005b}
The following estimates hold
\bel{eq:est-005b}
\big| \Gamma^I \widetilde{\Psi} \big|
\lesssim
\big(\eps + (C_1 \eps)^2 \big) \langle t+r \rangle^{-1},
\qquad
|I| \leq N-5.  
\ee
\end{lemma}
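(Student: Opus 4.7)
The plan is to apply Proposition \ref{prop:G1} to $v = \Gamma^I\widetilde{\Psi}$, which by \eqref{eq:solution-map2} satisfies an inhomogeneous Klein-Gordon equation with right-hand side $-\Gamma^I(\phi\Psi)$. The key task is to verify the structural hypothesis $\sum_{|J|\leq 4}\|\langle s+|x|\rangle\Gamma^J\Gamma^I(\phi\Psi)(s,\cdot)\|\lesssim \langle s\rangle^{-\delta'}$ for some $\delta'>0$; the initial-data contribution in Proposition \ref{prop:G1} is automatically controlled by the weighted Sobolev norm in the smallness hypothesis of Theorem \ref{thm:main1}, which does contain precisely the $\langle|x|\rangle\log\langle|x|\rangle$ weight required. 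By Leibniz, the source decomposes into products $\Gamma^{I_1}\phi\cdot\Gamma^{I_2}\Psi$ with $|I_1|+|I_2|\leq N-1$; since $N\geq 14$, every such pair satisfies either $|I_2|\leq N-7$ or $|I_1|\leq N-8$, and this dichotomy organises the estimate.

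In the first case ($|I_2|\leq N-7$) I would place $\Gamma^{I_2}\Psi$ in $L^\infty$, using the $X$-norm bound $|\Gamma^{I_2}\Psi|\lesssim C_1\eps\,\langle t-r\rangle\langle t+r\rangle^{-2}$. The weight $\langle s+|x|\rangle$ absorbs one factor of $\langle t+r\rangle$, leaving $\langle t\rangle^{-1}\|\langle t-r\rangle\Gamma^{I_1}\phi\|_{L^2}$, and this $L^2$ norm is controlled by $C_1\eps\langle t\rangle^{1/2+\delta}$ after combining the inside- and outside-cone pieces of $\|\cdot\|_X$ for $u$. The total contribution of this case is thus $(C_1\eps)^2\langle t\rangle^{-1/2+\delta}$.

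In the second, harder case ($|I_1|\leq N-8$) I would place $\Gamma^{I_1}\phi$ in $L^\infty$ via $|\Gamma^{I_1}\phi|\lesssim C_1\eps\,\langle t-r\rangle^{-(1-\delta)}\langle t+r\rangle^{-1/2}$, reducing the estimate to $\|\langle t+r\rangle^{1/2}\langle t-r\rangle^{-(1-\delta)}\Gamma^{I_2}\Psi\|_{L^2}$. On $\{r\leq 2t\}$ I would rewrite $\langle t-r\rangle^{-1}\leq\langle t\rangle^{-1}\bigl(\langle t+r\rangle/\langle t-r\rangle\bigr)$ and apply the weighted $L^2$ bound $\|(\langle t+r\rangle/\langle t-r\rangle)\Gamma^{I_2}\Psi\|\lesssim C_1\eps\langle t\rangle^\delta$ supplied by $\|\cdot\|_X$ for $|I_2|\leq N$. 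On $\{r\geq 2t\}$ the factors $\langle t-r\rangle$ and $\langle t+r\rangle$ are comparable, so the weight collapses to $\langle t+r\rangle^{-1/2+\delta}$ and the plain $L^2$ bound $\|\Gamma^{I_2}\Psi\|\lesssim \Ecal_{gst,1}(t,\Gamma^{I_2}\Psi)^{1/2}$ from $\|\cdot\|_X$ closes the estimate. Both regions yield $(C_1\eps)^2\langle t\rangle^{-1/2+2\delta}$.

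Summing the two cases produces $C_f\lesssim (C_1\eps)^2$ with $\delta'=\tfrac12-2\delta>0$ for small $\delta$, and Proposition \ref{prop:G1} then delivers $\langle t+r\rangle|\Gamma^I\widetilde{\Psi}|\lesssim \eps+(C_1\eps)^2$, which is exactly \eqref{eq:est-005b}. I expect the main obstacle to be the second case above: since the pointwise decay of $\phi$ is only $\langle t-r\rangle^{-(1-\delta)}$ rather than $\langle t-r\rangle^{-1}$, extracting a genuine negative power of $\langle t\rangle$ forces a region-dependent trade between the $\langle t+r\rangle/\langle t-r\rangle$-weighted $L^2$ bound on $\Psi$ (useful inside the cone) and the plain ghost-weight $L^2$ bound (useful outside). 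The bookkeeping in Definition \ref{def:X} appears to be calibrated precisely to accommodate this trade-off.
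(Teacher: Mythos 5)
Your proposal is correct and takes essentially the same approach as the paper: apply Proposition~\ref{prop:G1} to $\Gamma^I\widetilde\Psi$, use the Leibniz decomposition of $\Gamma^I(\phi\Psi)$ with the dichotomy on which factor has low order (placing that factor in $L^\infty$ via the pointwise $X$-norm bounds $\langle t-r\rangle^{-1}\langle t+r\rangle^2|\Gamma^J V|\lesssim C_1\eps$ and $\langle t-r\rangle^{1-\delta}\langle t+r\rangle^{1/2}|\Gamma^J u|\lesssim C_1\eps$), and verify that the source decays like $\langle t\rangle^{-1/2+O(\delta)}$. The only organizational difference is in your second case, where you split into $\{r\le 2t\}$ and $\{r\ge 2t\}$; the paper avoids this by always factoring the weight globally as $\langle t+r\rangle = \frac{\langle t+r\rangle}{\langle t-r\rangle}\cdot\langle t-r\rangle$ and pairing $\frac{\langle t+r\rangle}{\langle t-r\rangle}\Gamma^{I_1}\Psi$ (in $L^2$) against $\langle t-r\rangle\Gamma^{I_2}\phi$ (in $L^\infty$), which reaches the same bound in one stroke.
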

\begin{proof}
By Proposition \ref{prop:G1}, we need to bound the quantity
$$
\sum_{|I| \leq N-1}\big\| \langle t+r\rangle \Gamma^I \big( \phi \Psi \big) \big\|.
$$
We have
$$
\aligned
&\sum_{|I| \leq N-1}\big\| \langle t+r\rangle \Gamma^I \big( \phi \Psi \big) \big\|
\\
\lesssim
&\sum_{\substack{|I_1|\leq N-1\\ |I_2| \leq N-8}} \Big\|  {\langle t+r \rangle \over \langle t-r \rangle} \Gamma^{I_1} \Psi  \Big\|   
   \big\| \langle t-r \rangle \Gamma^{I_2} \phi \big\|_{L^\infty}
+ 
\sum_{\substack{|I_1|\leq N-7\\ |I_2| \leq N-1}}  \Big\|  {\langle t+r \rangle \over \langle t-r \rangle} \Gamma^{I_1} \Psi  \Big\|_{L^\infty}    
   \big\| \langle t-r \rangle \Gamma^{I_2} \phi \big\|
\\
\lesssim
&(C_1 \eps)^2 \langle t \rangle^{-1/2+3\delta}.  
\endaligned
$$
Then Proposition \ref{prop:G1} yields the desired result \eqref{eq:est-005b}.
\end{proof}

\begin{lemma}\label{lem:est-005c}
It holds
\bel{eq:est-005c}
\big| \Gamma^I \widetilde{\Psi} \big|
\lesssim
\big( \eps + (C_1 \eps)^2 \big) {\langle t-r \rangle \over \langle t+r \rangle^2},
\qquad
|I| \leq N-7. 
\ee
\end{lemma}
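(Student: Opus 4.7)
The plan is to upgrade the bound from Lemma \ref{lem:est-005b} by exploiting the additional $\la t-r\ra$-decay for Klein-Gordon components captured by Proposition \ref{prop:KG-extra}. I would split the spacetime into the exterior region $\{r\geq 3t\}$ and the interior region $\{r\leq 3t\}$, and may assume $t\geq 1$ throughout, since for bounded time the estimate reduces to the initial data assumption via short-time energy control and Klainerman-Sobolev.

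In the exterior region the three weights $\la t-r\ra$, $\la t+r\ra$ and $\la r\ra$ are comparable, so $\la t-r\ra/\la t+r\ra^2 \sim \la t+r\ra^{-1}$ and Lemma \ref{lem:est-005b} yields the claim directly (its range $|I|\leq N-5$ covers $|I|\leq N-7$). In the interior, where $\la t\ra\sim\la t+r\ra$, I would apply Proposition \ref{prop:KG-extra} to $v=\Gamma^I\widetilde\Psi$, which satisfies $-\Box v + v = -\Gamma^I(\phi\Psi)$, obtaining
$$|\Gamma^I\widetilde\Psi| \lesssim \tfrac{\la t-r\ra}{\la t\ra}|\del\del\Gamma^I\widetilde\Psi| + \tfrac{1}{\la t\ra}|\del\Gamma\Gamma^I\widetilde\Psi| + \tfrac{1}{\la t\ra}|\del\Gamma^I\widetilde\Psi| + |\Gamma^I(\phi\Psi)|.$$
Since every $\del_\alpha$ lies in $V$, each of the first three terms involves at most $|I|+2\leq N-5$ admissible vector fields acting on $\widetilde\Psi$, so Lemma \ref{lem:est-005b} bounds each by $(\eps+(C_1\eps)^2)\la t+r\ra^{-1}$; using $\la t-r\ra\geq 1$ these three contributions are absorbed into the target $(\eps+(C_1\eps)^2)\la t-r\ra/\la t+r\ra^2$.

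For the nonlinearity $|\Gamma^I(\phi\Psi)|$ I would apply Leibniz and handle the decomposition $|I_1|+|I_2|\leq N-7$ according to whether $|I_1|\leq N-8$. In that case Lemma \ref{lem:est-00a} provides pointwise control on both factors, yielding $(C_1\eps)^2\la t-r\ra^{\delta}\la t+r\ra^{-5/2}$, which is dominated by $(C_1\eps)^2\la t-r\ra/\la t+r\ra^2$ since $\delta<1$. The main obstacle is the top-derivative case $|I_1|=N-7$, $|I_2|=0$, because the pointwise bound on $\Gamma^{N-7}\phi$ is not provided by Lemma \ref{lem:est-00a}. I would handle it by invoking the Klainerman-Sobolev inequality of Proposition \ref{prop:K-S} together with the $X$-norm energy bound $\|\Gamma^J\phi(s)\|\lesssim C_1\eps\la s\ra^{\delta}$ (valid for $|J|\leq N+1 \geq N-4$), obtaining $|\Gamma^{N-7}\phi|\lesssim C_1\eps\la t+r\ra^{-1/2+\delta}$; multiplied by $|\Psi|\lesssim C_1\eps\la t-r\ra/\la t+r\ra^2$ from Lemma \ref{lem:est-00a}, this contribution also fits within the target up to a harmless $\la t\ra^{-1/2+\delta}$ factor. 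Combining all pieces produces the claimed bound with constant $\eps+(C_1\eps)^2$; the delicate point is precisely this top-derivative case, where a small loss in $\la t\ra^{\delta}$ from the energy is paid for by the sharp spatial weight extracted from the $\Psi$ factor.
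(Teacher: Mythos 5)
Your argument is correct and follows the route the paper intends: apply Proposition~\ref{prop:KG-extra} to $v=\Gamma^I\widetilde\Psi$ (for $t\geq 1$, $r\leq 3t$), absorb the three derivative terms via Lemma~\ref{lem:est-005b} (which covers up to $N-5$ vector fields, enough for $|I|+2\leq N-5$), use the comparability of $\la t-r\ra$ and $\la t+r\ra$ outside the cone, and estimate the source $F_v=\Gamma^I(\phi\Psi)$ pointwise. The paper's proof is a one-line citation of those two ingredients, whereas you supply the missing step of actually bounding $F_v$. Your identification of the top-order case $|I_1|=N-7$, $|I_2|=0$ — where the pointwise bound on $\Gamma^{I_1}\phi$ is not available from Lemma~\ref{lem:est-00a} and must instead come from Proposition~\ref{prop:K-S} applied to the $\la t\ra^{\delta}$-growing energy $\|\Gamma^J\phi\|$, $|J|\leq N+1$ — is exactly the detail the paper leaves implicit, and your treatment of it (paying a $\la t\ra^{\delta}$ loss that is offset by the sharp $\la t-r\ra\la t+r\ra^{-2}$ weight pulled out of the $\Psi$ factor) is sound. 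One cosmetic remark: Klainerman–Sobolev gives $|\Gamma^{N-7}\phi|\lesssim C_1\eps\,\la t\ra^{\delta}\la t+r\ra^{-1/2}$ rather than $C_1\eps\,\la t+r\ra^{-1/2+\delta}$; the two are equivalent in the interior region $r\leq 3t$ where you apply it, but it is cleaner to keep the $\la t\ra^{\delta}$ form coming from the $\sup_{0\leq s\leq 2t}$ in \eqref{eq:K-S}.
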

\begin{proof}
The proof follows from Lemma \ref{lem:est-005b} and Proposition \ref{prop:KG-extra}.
\end{proof}

\begin{lemma}
We have  
\bel{eq:est-020}
\aligned
\big\| \chi^{1/2}(r-t) \langle r-t \rangle \del \Gamma^I \widetilde{\Psi} \big\|
&+
\big\| \chi^{1/2}(r-t) \langle r-t \rangle \Gamma^I \widetilde{\Psi} \big\|
\\
\lesssim&
\left\{
\aligned
&\eps+(C_1 \eps)^2 \langle t \rangle^{1/2+\delta},
&&|I| \leq N-1,
\\
&\eps+(C_1 \eps)^2 \langle t \rangle^{\delta},&&|I| \leq N-5.
\endaligned
\right.
\endaligned
\ee
\end{lemma}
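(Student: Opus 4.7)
The plan is to apply Proposition \ref{prop:KG-extra2} with $\eta = 1$ to $\Gamma^I \widetilde{\Psi}$, which, upon acting $\Gamma^I$ on \eqref{eq:solution-map1}, satisfies the Klein-Gordon equation with source $-\Gamma^I(\phi\Psi)$. Setting
\[
A(t)^2 := \int_{\RR^2}\chi(r-t)\langle r-t\rangle^2\bigl((\del\Gamma^I\widetilde{\Psi})^2 + (\Gamma^I\widetilde{\Psi})^2\bigr)\,dx,
\]
and using the weighted smallness of the data to conclude $A(0)^2 \lesssim \eps^2$, the proposition furnishes
\[
A(t)^2 \lesssim \eps^2 + \int_0^t\int_{\RR^2}\chi(r-\tau)\langle r-\tau\rangle^2 \bigl|\Gamma^I(\phi\Psi)\,\del_t\Gamma^I\widetilde{\Psi}\bigr|\,dx\,d\tau.
\]
A spatial Cauchy-Schwarz splitting the weight $\chi\langle r-\tau\rangle^2$ symmetrically, followed by the standard Gronwall-in-supremum argument $M(t)^2\leq C\eps^2 + M(t)\int_0^t B(\tau)\,d\tau$ with $M(t) = \sup_{s\leq t}A(s)$, reduces the task to bounding $\int_0^t B(\tau)\,d\tau$, where $B(\tau) := \bigl\|\chi^{1/2}(r-\tau)\langle r-\tau\rangle\Gamma^I(\phi\Psi)\bigr\|$.

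Expanding $\Gamma^I(\phi\Psi) = \sum_{I_1+I_2=I}\Gamma^{I_1}\phi\,\Gamma^{I_2}\Psi$ via Leibniz, I estimate each term by one of two complementary strategies chosen according to the size of $|I_2|$. When $|I_2| \leq N-5$, I use the pointwise bound $|\Gamma^{I_2}\Psi| \leq C_1\eps\langle\tau+r\rangle^{-1}$ from $\|\cdot\|_X$, which on the $\chi$-support $\{r\geq\tau+1\}$ improves to $C_1\eps\langle\tau\rangle^{-1}$ because $\langle\tau+r\rangle\geq\langle\tau\rangle$ there, and pair it in $L^\infty\cdot L^2$ with the exterior weighted bound on $\chi^{1/2}\langle r-\tau\rangle\Gamma^{I_1}\phi$ from Definition \ref{def:X}. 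When $|I_2| > N-5$ (which can only occur for $|I| \leq N-1$), the constraint $|I_1|+|I_2| \leq N-1$ forces $|I_1| \leq 3 \leq N-8$, and I instead put $\Gamma^{I_1}\phi$ into $L^\infty$: the wave pointwise bound $|\Gamma^{I_1}\phi| \leq C_1\eps\langle\tau+r\rangle^{-1/2}\langle\tau-r\rangle^{-1+\delta}$ combined with the weight $\chi^{1/2}\langle r-\tau\rangle$ produces, after a one-variable optimization over $s = r - \tau$, the bound $\|\chi^{1/2}\langle r-\tau\rangle\Gamma^{I_1}\phi\|_{L^\infty(\chi\neq 0)} \lesssim C_1\eps\langle\tau\rangle^{-1/2+\delta}$ (with the maximum attained near $r-\tau \sim \tau$), while $\Gamma^{I_2}\Psi$ is controlled in $L^2$ via $\Ecal_{gst,1}(\Gamma^{I_2}\Psi)^{1/2} \lesssim C_1\eps\langle\tau\rangle^\delta$.

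Putting these together: for $|I|\leq N-5$ every Leibniz split has $|I_1|,|I_2|\leq N-5$, so the first strategy applies throughout and gives $B(\tau) \lesssim (C_1\eps)^2\langle\tau\rangle^{-1+\delta}$, whence $\int_0^t B \lesssim (C_1\eps)^2\langle t\rangle^\delta$, yielding the second line of \eqref{eq:est-020}. For $|I|\leq N-1$, splitting into $|I_2|\leq N-5$ and $|I_2|\geq N-4$ and applying the two strategies respectively gives $B(\tau) \lesssim (C_1\eps)^2\langle\tau\rangle^{-1/2+\delta}$ and $B(\tau)\lesssim (C_1\eps)^2\langle\tau\rangle^{-1/2+2\delta}$; both integrate to $(C_1\eps)^2\langle t\rangle^{1/2+O(\delta)}$, matching the first line of \eqref{eq:est-020} after the harmless absorption of the multiplicative $\delta$-loss into the parameter $\delta$.

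The delicate point is the high-derivative sub-case for $|I|\leq N-1$: since no pointwise bound for $\Gamma^{I_2}\Psi$ is available there, $\phi$ alone must carry the heavy weight $\chi^{1/2}\langle r-\tau\rangle$ in $L^\infty$. The factor $\langle\tau-r\rangle^{-1+\delta}$ in the wave pointwise bound exactly cancels this weight, leaving only an acceptable $\langle r-\tau\rangle^{\delta}$ loss, and the observation $\langle\tau+r\rangle \geq \langle\tau\rangle$ inside the $\chi$-support converts the slow wave decay near the cone into the $\langle\tau\rangle^{-1/2+\delta}$ decay needed to close the time integration.
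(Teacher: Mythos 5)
Your proof is correct and follows essentially the same route as the paper: apply the exterior weighted Klein-Gordon energy estimate, reduce to $\int_0^t\|\chi^{1/2}(r-\tau)\langle r-\tau\rangle\Gamma^I(\phi\Psi)\|\,d\tau$, and split the Leibniz expansion according to whether $|I_2|\leq N-5$ (put $\Gamma^{I_2}\Psi$ in $L^\infty$, the exterior weighted $L^2$ bound of $\phi$ from $\|\cdot\|_X$) or $|I_2|>N-5$ (put $\Gamma^{I_1}\phi$ in $L^\infty$ with its pointwise decay $\langle t+r\rangle^{-1/2}\langle t-r\rangle^{-1+\delta}$, $\Gamma^{I_2}\Psi$ in $L^2$ through $\Ecal_{gst,1}$). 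One small correction you make implicitly and could have flagged: the paper's proof cites Proposition \ref{prop:wave-extra2}, which is stated for the wave equation, whereas the correct tool for the Klein-Gordon component $\widetilde{\Psi}$ is Proposition \ref{prop:KG-extra2}, exactly as you invoke.
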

\begin{proof}
We apply the energy estimates in Proposition \ref{prop:wave-extra2} with $\gamma=1$ to the $\Gamma^I \widetilde{\Psi}$ equation in \eqref{eq:solution-map2} with $|I| \leq N-1$, we obtain
$$
\aligned
&\big\| \chi^{1/2}(r-t) \langle r-t \rangle \del \Gamma^I \widetilde{\Psi} \big\|
+
\big\| \chi^{1/2}(r-t) \langle r-t \rangle \Gamma^I \widetilde{\Psi} \big\|
\\
\lesssim
&\big\| \langle r \rangle \del \Gamma^I \widetilde{\Psi} (t_0, \cdot) \big\|
+
\big\| \langle r \rangle \Gamma^I \widetilde{\Psi} (t_0, \cdot) \big\|
+
\int_{t_0}^t \big\| \chi^{1/2}(r-\tau) \langle r-\tau \rangle \Gamma^I (\phi \Psi) \big\| \, d\tau.
\endaligned
$$
We note that
$$
\aligned
&\big\| \chi^{1/2}(r-\tau) \langle r-\tau \rangle \Gamma^I (\phi \Psi) \big\|
\\
\lesssim
&\sum_{|I_1|\leq N-8, \, |I_2| \leq N-1} \big\| \chi^{1/2}(r-\tau) \langle r-\tau \rangle \Gamma^{I_1} \phi \big\|_{L^\infty} \big\|  \Gamma^{I_2} \Psi \big\|
\\
+
&\sum_{|I_1|\leq N-1, \, |I_2| \leq N-5} \big\| \chi^{1/2}(r-\tau) \langle r-\tau \rangle \Gamma^{I_1} \phi \big\| \big\| \Gamma^{I_2} \Psi \big\|_{L^\infty}
\\
\lesssim
& (C_1 \eps)^2 \langle \tau \rangle^{-1/2+\delta},
\endaligned
$$
which gives the first inequality in \eqref{eq:est-020}.

Analogously, for the case of $|I| \leq N-5$ we only need to bound
$$
\big\| \chi^{1/2}(r-\tau) \langle r-\tau \rangle \Gamma^I (\phi \Psi) \big\|.
$$
Our strategy is to always take $L^\infty$-norm on the $\Psi$ part, and we find
$$
\aligned
&\big\| \chi^{1/2}(r-\tau) \langle r-\tau \rangle \Gamma^I (\phi \Psi) \big\|
\\
\lesssim
&\sum_{|I_1|\leq N-5, \, |I_2| \leq N-5} \big\| \chi^{1/2}(r-\tau) \langle r-\tau \rangle \Gamma^{I_1} \phi \big\| \big\| \Gamma^{I_2} \Psi \big\|_{L^\infty}
\\
\lesssim
& (C_1 \eps)^2 \langle \tau \rangle^{-1+\delta},
\endaligned
$$
which finishes the proof.

\end{proof}

\begin{lemma}\label{lem:est-020}
The following holds
\bel{eq:est-020}
\aligned
\big\| \chi^{1/2}(r-t) \langle r-t \rangle \Gamma^I \widetilde{\phi} \big\|
&\lesssim
\left\{
\aligned
&\eps+(C_1 \eps)^2 \langle t \rangle^{1/2+\delta},
\qquad
&|I| \leq N-1,
\\
&\eps+(C_1 \eps)^2 \langle t \rangle^{\delta},
\qquad
&|I| \leq N-5.
\endaligned
\right.
\endaligned
\ee

\end{lemma}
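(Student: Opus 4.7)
The proof hinges on the hidden Hessian structure $\widetilde{\phi}=\Delta\widetilde{\phi}^\Delta$ together with the ghost weight estimate of Proposition \ref{prop:wave-extra2}. Since $\Box$ commutes with both $\del_\beta$ and every $\Gamma$, each quantity $\del_\beta \Gamma^J \widetilde{\phi}^\Delta$ (with $\beta\in\{0,1,2\}$) solves the wave equation with source $\del_\beta\Gamma^J|\Psi|^2$; combined with the commutator bound \eqref{eq2-10-10-2021} applied to $\Delta=\del_a\del^a$, this yields the pointwise comparison
\begin{equation*}
\bigl|\Gamma^I\widetilde{\phi}\bigr|=\bigl|\Gamma^I\Delta\widetilde{\phi}^\Delta\bigr|\lesssim\sum_{|J|\leq|I|,\,\beta}\bigl|\del\bigl(\del_\beta\Gamma^J\widetilde{\phi}^\Delta\bigr)\bigr|,
\end{equation*}
reducing the proof to ghost weight $L^2$-bounds on $\del(\del_\beta\Gamma^J\widetilde{\phi}^\Delta)$.

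To each $\del_\beta\Gamma^J\widetilde{\phi}^\Delta$ I apply Proposition \ref{prop:wave-extra2} with $\eta=1$ in its square-root form (obtained from the stated inequality by Cauchy--Schwarz in $x$ followed by a Gronwall argument in $t$), which gives
\begin{equation*}
\bigl\|\chi^{1/2}(r-t)\la r-t\ra\,\del\del_\beta\Gamma^J\widetilde{\phi}^\Delta\bigr\|\lesssim \eps + \int_0^t\bigl\|\chi^{1/2}(r-\tau)\la r-\tau\ra\,\del_\beta\Gamma^J|\Psi|^2\bigr\|\,d\tau.
\end{equation*}
The initial-data contribution is $\lesssim\eps$ once $\del_t$-powers are traded for spatial derivatives using the equation for $\widetilde{\phi}^\Delta$ and the weighted Sobolev assumptions on $(n_0^\Delta,n_1^\Delta,E_0,E_1)$ are invoked. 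For the source, Leibniz expansion produces summands of the form $\del\Gamma^{J_1}\Psi\cdot\Gamma^{J_2}\Psi$ (and its symmetric variant) with $|J_1|+|J_2|\leq|J|$; since $N\geq 14$, one can always arrange that the lower-order factor has index at most $N-5$. I place that factor in $L^\infty$ via the $X$-norm pointwise bound $|\Gamma^K\Psi|\lesssim C_1\eps\la t+r\ra^{-1}$ for $|K|\leq N-5$ (and the shifted counterpart for $|\del\Gamma^K\Psi|$), while the higher-order factor, carrying the weight $\chi^{1/2}(r-\tau)\la r-\tau\ra$, is controlled in $L^2$ by the $X$-norm quantity
\begin{equation*}
\bigl\|\chi^{1/2}(r-t)\la r-t\ra\,\del\Gamma^K V\bigr\|+\bigl\|\chi^{1/2}(r-t)\la r-t\ra\,\Gamma^K V\bigr\|\lesssim C_1\eps\la t\ra^{1/2+\delta}
\end{equation*}
for $|K|\leq N-1$, sharpened to $\lesssim C_1\eps\la t\ra^\delta$ when $|K|\leq N-5$. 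The integrand is then $(C_1\eps)^2\la\tau\ra^{-1/2+\delta}$ in the first case and $(C_1\eps)^2\la\tau\ra^{-1+\delta}$ in the second, and integration in $\tau$ produces the factors $\la t\ra^{1/2+\delta}$ and $\la t\ra^\delta$ announced in the lemma.

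The main obstacle will be arranging the Leibniz splits so that each factor sits at an index handled by the appropriate piece of the $X$-norm (pointwise $\la t+r\ra^{-1}$-decay at indices $\leq N-5$; weighted $L^2$ at indices $\leq N-1$), and this is precisely what forces $N\geq 14$. A secondary subtlety is the choice of working with $\del_\beta\Gamma^J\widetilde{\phi}^\Delta$ through its wave equation rather than applying Proposition \ref{prop:wave-extra} pointwise: the latter would introduce the factor $r/\la t\ra^2$ in the region $|x|\gtrsim t$, where the resulting weighted $L^2$-norm would diverge.
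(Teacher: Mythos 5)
Your proposal is correct and follows essentially the same route as the paper: reduce to $\widetilde{\phi}^\Delta$ via $\widetilde{\phi}=\Delta\widetilde{\phi}^\Delta$, apply Proposition \ref{prop:wave-extra2} with $\eta=1$ in its square-root form to $\del\Gamma^I\widetilde{\phi}^\Delta$, and close with a Leibniz split that puts the lower-order ($\leq N-5$) factor of $|\Psi|^2$ in $L^\infty$ and the higher-order factor in the weighted $L^2$ norm from the $X$-space, yielding $\la\tau\ra^{-1/2+\delta}$ (resp.\ $\la\tau\ra^{-1+\delta}$) and then $\la t\ra^{1/2+\delta}$ (resp.\ $\la t\ra^\delta$) after integration. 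Your observation that the pointwise Hessian bound of Proposition \ref{prop:wave-extra} would introduce the unfavorable factor $r/\la t\ra^2$ in the exterior region is a correct and useful aside, though not part of the paper's argument.
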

\begin{proof}
Our strategy is to first prove the bounds for $\widetilde{\phi}^\Delta$, and then pass them to $\widetilde{\phi}$ according to the relation
$$
\widetilde{\phi} = \Delta \widetilde{\phi}^\Delta.
$$

Consider the $\del \Gamma^I \widetilde{\phi}^\Delta$ equation in \eqref{eq:solution-map2} with $|I| \leq N-1$, and we apply the energy estimates in Proposition \ref{prop:wave-extra2} with $\gamma=1$ to get
$$
\aligned
\big\| \chi^{1/2}(r-t) \langle r-t \rangle \del \del \Gamma^I \widetilde{\phi}^\Delta \big\|
\lesssim
\big\| \langle r \rangle \del \del \Gamma^I \widetilde{\phi}^\Delta (t_0, \cdot) \big\|
+
\int_0^t \big\| \chi^{1/2}(r-\tau) \langle r-\tau \rangle \del \Gamma^I \big| \Psi \big|^2 \big\| \, d\tau.
\endaligned
$$
In succession, we have
$$
\aligned
&\big\| \chi^{1/2}(r-t) \langle r-t \rangle \del \Gamma^I \big| \Psi \big|^2 \big\|
\\
\lesssim
&\sum_{|I_1| \leq N-1, |I_2|\leq N-5} \big\| \chi^{1/2}(r-t) \langle r-t \rangle \del \Gamma^{I_1} \Psi \big\| \big\| \Gamma^{I_2} \Psi \big\|_{L^\infty}
\\
+
&\sum_{|I_1| \leq N-1, |I_2|\leq N-5} \big\| \chi^{1/2}(r-t) \langle r-t \rangle \Gamma^{I_1} \Psi \big\| \big\| \Gamma^{I_2} \Psi \big\|_{L^\infty}
\\
\lesssim
& (C_1 \eps)^2 \langle t \rangle^{-1/2+\delta},
\endaligned
$$
which further gives us
$$
\aligned
&\big\| \chi^{1/2}(r-t) \langle r-t \rangle \del \del \Gamma^I \widetilde{\phi}^\Delta \big\|
\\
\lesssim
&\eps + (C_1 \eps)^2 \int_0^t \langle \tau \rangle^{-1/2+\delta} \, d\tau
\\
\lesssim
&\eps + (C_1 \eps)^2 \langle t \rangle^{1/2+\delta},
\qquad
|I| \leq N-1.
\endaligned
$$
Thus the first inequality in \eqref{eq:est-020} is verified due to
$$
\aligned
&\big\| \chi^{1/2}(r-t) \langle r-t \rangle \Gamma^I \widetilde{\phi} \big\|
\\
=
&\big\| \chi^{1/2}(r-t) \langle r-t \rangle \Gamma^I \Delta \widetilde{\phi}^\Delta \big\|
\\
\lesssim
&\sum_{|I_1| \leq |I|} \big\| \chi^{1/2}(r-t) \langle r-t \rangle \del \del \Gamma^{I_1} \widetilde{\phi}^\Delta \big\|
\\
\lesssim
&\eps + (C_1 \eps)^2 \langle t \rangle^{1/2+\delta},
\qquad
|I| \leq N-1.
\endaligned
$$


The case of $|I|\leq N-5$ can be derived in the same manner. The proof is complete.
\end{proof}

\begin{lemma}\label{lem:est-003}
We get
\bel{eq:est-003}
\big\| \Gamma^I \widetilde{\phi} \big\|
\lesssim
\eps + (C_1 \eps)^2 \langle t \rangle^\delta,
\qquad
|I| \leq N+1
\ee
\end{lemma}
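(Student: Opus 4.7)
My plan is to transfer the $L^2$ bound on $\Gamma^I\widetilde\phi$ back to a Hessian bound on $\widetilde\phi^\Delta$ via the defining relation $\widetilde\phi = \Delta\widetilde\phi^\Delta$. Writing
\[
\Gamma^I\widetilde\phi \;=\; \del^a\del_a\Gamma^I\widetilde\phi^\Delta + [\Gamma^I,\del^a\del_a]\widetilde\phi^\Delta
\]
and invoking the commutator estimate \eqref{eq2-10-10-2021}, I would reduce matters to
\[
\|\Gamma^I\widetilde\phi\| \;\lesssim\; \sum_{|J|\leq|I|}\sum_{\alpha,\beta}\|\del_\alpha\del_\beta\Gamma^J\widetilde\phi^\Delta\|, \qquad |I|\leq N+1.
\]
Each Hessian term would then be attacked by applying the natural energy estimate \eqref{eq:natural-wKG2} to $\del\Gamma^J\widetilde\phi^\Delta$. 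Since $\Box$ commutes with both $\del_\alpha$ and $\Gamma^J$, it satisfies the wave equation $-\Box(\del\Gamma^J\widetilde\phi^\Delta) = \del(\Gamma^J|\Psi|^2)$, so $\|\del\del\Gamma^J\widetilde\phi^\Delta\|$ is controlled by $\Ecal_0(t,\del\Gamma^J\widetilde\phi^\Delta)^{1/2}$. The data contribution at $t=t_0$ costs $|J|+2\leq N+3$ derivatives of $n_0^\Delta$ and $|J|+1\leq N+2$ of $n_1^\Delta$, and is absorbed by the smallness hypothesis on $(n_0^\Delta, n_1^\Delta)$.

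The core work is then to estimate $\int_0^t\|\del\Gamma^J|\Psi|^2\|\,d\tau$. Leibniz produces a sum of bilinear terms of the type $(\del\Gamma^{J_1}\Psi)\,\Gamma^{J_2}\Psi$ (and the symmetric version) with $|J_1|+|J_2|\leq N+1$. Because $N\geq 14$, the smaller multi-index is at most $\lfloor(N+1)/2\rfloor\leq N-6$, so I would put that factor in $L^\infty$ via the pointwise bound $|\Gamma^{J'}\Psi|\lesssim C_1\eps\langle t+r\rangle^{-1}$ from the $X$-norm at order $N-5$, noting that an extra $\del$ costs nothing since $\del\in V$. The companion $L^2$ factor of order $\leq N+1$ is controlled by $\Ecal_{gst,1}(t,\Gamma^\cdot\Psi)^{1/2}\lesssim C_1\eps\langle t\rangle^\delta$, again from the $X$-norm. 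Both splits therefore yield $\|\del\Gamma^J|\Psi|^2\|\lesssim (C_1\eps)^2\langle\tau\rangle^{-1+\delta}$, and integration in time gives $(C_1\eps)^2\langle t\rangle^\delta$, producing the claimed bound $\|\Gamma^I\widetilde\phi\|\lesssim \eps+(C_1\eps)^2\langle t\rangle^\delta$.

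The delicate point I expect to wrestle with is the top-order bookkeeping at $|J|=N+1$: the quantity $\del\Gamma^J|\Psi|^2$ nominally carries $N+2$ derivatives, one above the $L^2$ control level in the $X$-norm. What rescues the argument is the observation that $\del\Gamma^{J_1}\Psi$ for $|J_1|\leq N+1$ is still bounded by the energy $\Ecal_{gst,1}(t,\Gamma^{J_1}\Psi)^{1/2}$, because the extra $\del$ is part of the energy functional rather than an additional $\Gamma$. Together with the bilinear split permitted by $N\geq 14$, this closes the estimate without invoking any new decay information beyond what is already encoded in the $X$-norm; the present lemma is, morally, the plain $L^2$ analogue at one extra order of the weighted Hessian bound used in Lemma~\ref{lem:est-020}.
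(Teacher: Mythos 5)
Your proposal follows essentially the same route as the paper: reduce $\|\Gamma^I\widetilde\phi\|$ to $\|\del\del\Gamma^J\widetilde\phi^\Delta\|$ via $\widetilde\phi=\Delta\widetilde\phi^\Delta$, apply the natural energy estimate \eqref{eq:natural-wKG2} to $\del\Gamma^J\widetilde\phi^\Delta$, and control $\int_0^t\|\del\Gamma^J|\Psi|^2\|\,d\tau$ by a Leibniz split with the low-order factor in $L^\infty$ (decay $\langle t+r\rangle^{-1}$ at order $\leq N-5$ from the $X$-norm) and the high-order factor bounded by $\Ecal_{gst,1}^{1/2}\lesssim C_1\eps\langle t\rangle^\delta$. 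The only cosmetic difference is that you spell out the commutator $[\Gamma^I,\del^a\del_a]$ explicitly, whereas the paper absorbs this into the final line $\|\Gamma^I\Delta\widetilde\phi^\Delta\|\lesssim\sum_{|I_1|\leq N+1}\|\del\del\Gamma^{I_1}\widetilde\phi^\Delta\|$; your top-order bookkeeping and data-regularity accounting match the paper.
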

\begin{proof}
As before, our strategy is to first derive the estimates for $\widetilde{\phi}^\Delta$, and then transform the estimates to $\widetilde{\phi}$ via the relation
$$
\widetilde{\phi} = \Delta \widetilde{\phi}^\Delta.
$$

We act $\del \Gamma^I$ with $|I| \leq N+1$ to the $\widetilde{\phi}^\Delta$ equation to get
$$
-\Box \del \Gamma^I \widetilde{\phi}^\Delta = \del \Gamma^I \big(|\Psi|^2\big).
$$
Then the energy estimates \eqref{eq:natural-wKG2} imply
$$
\aligned
\Ecal (t, \del \Gamma^I \widetilde{\phi}^\Delta)^{1/2}
\lesssim
\Ecal (t_0, \del \Gamma^I \widetilde{\phi}^\Delta)^{1/2}
+
\int_0^t \big\| \del \Gamma^I \big(|\Psi|^2\big)  \big\|(\tau) \, d\tau.
\endaligned
$$
Simple analysis shows
$$
\aligned
&\big\| \del \Gamma^I \big(|\Psi|^2\big)  \big\|
\\
\lesssim
&\sum_{\substack{ |I_1| \leq |I| \\ |I_2| \leq N-5 }} \big\| \del \Gamma^{I_1} \Psi \big\| \big\|  \Gamma^{I_2} \Psi \big\|_{L^\infty}
+
\sum_{\substack{|I_1| \leq |I| \\  |I_2| \leq N-6}} \big\| \Gamma^{I_1} \Psi \big\| \big\| \del \Gamma^{I_2} \Psi \big\|_{L^\infty}
\\
\lesssim
&(C_1 \eps)^2 \langle t \rangle^{-1+\delta}.
\endaligned
$$

Thus we have
\bel{eq:est-phi-Delta}
\Ecal (t, \del \Gamma^I \widetilde{\phi}^\Delta)^{1/2}
\lesssim
\eps + (C_1 \eps)^2 \langle t \rangle^\delta
\qquad
|I| \leq N+1.
\ee

Finally, we observe for $|I| \leq N+1$
$$
\big\| \Gamma^I \widetilde{\phi} \big\|
=
\big\| \Gamma^I \Delta \widetilde{\phi}^\Delta \big\|
\lesssim
\sum_{|I_1|\leq N+1} \big\| \del \del \Gamma^{I_1} \widetilde{\phi}^\Delta \big\|
\lesssim 
\eps + (C_1 \eps)^2 \langle t \rangle^{\delta},
$$
which finishes the proof.
\end{proof}

\begin{lemma}\label{lem:est-010}
We have the following estimates
\bel{eq:est-010}
\aligned
\big\| \big( 1-\chi(r-2t) \big)^{1/2} \langle t-r \rangle \Gamma^I \widetilde{\phi} \big\|
&\lesssim
\big( \eps + (C_1 \eps)^2 \big) \langle t \rangle^{\delta},
\qquad
&|I| \leq N.
\endaligned
\ee

\end{lemma}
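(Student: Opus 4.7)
The plan is to reduce the estimate on $\Gamma^I\widetilde{\phi}$ to one on $\del\del\Gamma^{I'}\widetilde{\phi}^\Delta$ via the identity $\widetilde{\phi}=\Delta\widetilde{\phi}^\Delta$, and then to extract the extra $\langle t-r\rangle$ weight from the Hessian decay estimate for wave components (Proposition \ref{prop:wave-extra}), which is valid precisely in the region $\{r\leq 3t\}$ where the cutoff $1-\chi(r-2t)$ is supported (for $t\geq 2$; small $t$ is trivial since everything is bounded on the initial slice by Sobolev). Writing $\Gamma^I\widetilde{\phi}=\Gamma^I\Delta\widetilde{\phi}^\Delta$ and applying the commutator estimate \eqref{eq2-10-10-2021}, we obtain
\[
\big|\Gamma^I\widetilde{\phi}\big|\lesssim\sum_{|I'|\leq |I|}\big|\del\del\Gamma^{I'}\widetilde{\phi}^\Delta\big|.
\]

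Next, for each $|I'|\leq N$, I would apply Proposition \ref{prop:wave-extra} (the Hessian estimate \eqref{eq:w-Hessian1}) to $w=\Gamma^{I'}\widetilde{\phi}^\Delta$, whose source is $F_w=\Gamma^{I'}(|\Psi|^2)$ (by \eqref{eq:solution-map2}). After multiplying by $\langle t-r\rangle$, this gives the pointwise bound
\[
\langle t-r\rangle\big|\del\del\Gamma^{I'}\widetilde{\phi}^\Delta\big|\lesssim\sum_{|J|\leq |I'|+1}\big|\del\Gamma^J\widetilde{\phi}^\Delta\big|+t\,\big|\Gamma^{I'}(|\Psi|^2)\big|,\qquad r\leq 3t.
\]
Taking $L^2$ norms with the cutoff, the first term is controlled by $\sum_{|J|\leq N+1}\|\del\Gamma^J\widetilde{\phi}^\Delta\|$, which was already established in \eqref{eq:est-phi-Delta} to be $\lesssim \eps+(C_1\eps)^2\langle t\rangle^\delta$.

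For the second term, I would perform a standard Leibniz split: for any decomposition $I'=I_1+I_2$ with $|I'|\leq N$ (and $N\geq 14$), the smaller of $|I_1|,|I_2|$ satisfies $\min(|I_1|,|I_2|)\leq N/2\leq N-7$. Placing the low-order factor in $L^\infty$ using the pointwise decay $|\Gamma^J\Psi|\lesssim C_1\eps\,\langle t+r\rangle^{-1}$ (valid for $|J|\leq N-5$ from Definition \ref{def:X}) and the other factor in $L^2$ via the $X$-norm bound $\|\Gamma^J\Psi\|\lesssim C_1\eps\,\langle t\rangle^\delta$, we get
\[
\|\Gamma^{I'}(|\Psi|^2)\|\lesssim (C_1\eps)^2\,\langle t\rangle^{-1+\delta},
\]
so that the extra factor of $t$ is absorbed, giving $t\,\|\Gamma^{I'}(|\Psi|^2)\|\lesssim (C_1\eps)^2\langle t\rangle^\delta$. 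Summing all contributions yields \eqref{eq:est-010}.

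The main (mild) obstacle is checking that the $t$ factor coming from the Hessian estimate does not overwhelm the $\langle t\rangle^\delta$ budget on the right-hand side; this is precisely why one needs the $\langle t+r\rangle^{-1}$ pointwise decay on the Klein-Gordon field $\Psi$ (rather than just an $\eps$ times $\langle t\rangle^{-1/2}$ bound from Klainerman-Sobolev) to obtain the crucial $\langle t\rangle^{-1+\delta}$ decay of $\|\Gamma^{I'}(|\Psi|^2)\|$. Everything else is a routine application of already-proved bounds for $\widetilde{\phi}^\Delta$.
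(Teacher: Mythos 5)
Your proposal follows essentially the same route as the paper: reduce to the Hessian of $\widetilde{\phi}^\Delta$ via $\widetilde{\phi}=\Delta\widetilde{\phi}^\Delta$, invoke the Hessian decay of Proposition~\ref{prop:wave-extra} (valid in $\{r\leq 3t\}$, where $1-\chi(r-2t)$ is supported) to extract the $\langle t-r\rangle$ weight, control the good terms by the energy bound for $\widetilde{\phi}^\Delta$, and absorb the factor $t$ using the $\langle t+r\rangle^{-1}$ pointwise decay of $\Psi$ to get $\|\Gamma^{I'}(|\Psi|^2)\|\lesssim(C_1\eps)^2\langle t\rangle^{-1+\delta}$. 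The only slip is the citation for $\sum_{|J|\leq N+1}\|\del\Gamma^J\widetilde{\phi}^\Delta\|$: this is \eqref{eq:est-phi-Delta2} (i.e.\ $\Ecal(t,\Gamma^J\widetilde{\phi}^\Delta)^{1/2}$, which the paper establishes at the start of the proof by the same argument as Lemma~\ref{lem:est-003}), not \eqref{eq:est-phi-Delta} which controls second derivatives $\|\del\del\Gamma^J\widetilde{\phi}^\Delta\|$.
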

\begin{proof}
Again, we will first show the bounds for $\widetilde{\phi}^\Delta$, and then pass them to $\widetilde{\phi}$. We only consider the region $\{ r \leq 3t \}$ for large $t$ in the following.

Similar to \eqref{eq:est-phi-Delta}, we have
\bel{eq:est-phi-Delta2}
\Ecal (t, \Gamma^I \widetilde{\phi}^\Delta)^{1/2}
\lesssim
\eps + (C_1 \eps)^2 \langle t \rangle^\delta
\qquad
|I| \leq N+1.
\ee
Recall that we can obtain some extra decay for the Hessian form of the wave components as illustrated in Proposition \ref{prop:wave-extra}, which, for the $\widetilde{\phi}^\Delta$ component in equation \eqref{eq:solution-map1}, reads
$$
\aligned
|\del \del \widetilde{\phi}^\Delta |
\lesssim
{1\over \langle t-r \rangle} \big( |\del \Gamma \widetilde{\phi}^\Delta | + |\del \widetilde{\phi}^\Delta |  \big)
+
{t\over \langle t-r \rangle} |\Psi|^2,
\qquad
r\leq 3t,
\endaligned
$$
in which we used the relation $r \lesssim t$.
To proceed we have
$$
\aligned
&\big( 1-\chi(r-2t) \big)^{1/2}  \langle t-r \rangle |\del \del \widetilde{\phi}^\Delta |
\\
\lesssim
&\big( 1-\chi(r-2t) \big)^{1/2}  \big(|\del \Gamma \widetilde{\phi}^\Delta | + |\del \widetilde{\phi}^\Delta | \big)
+
\big( 1-\chi(r-2t) \big)^{1/2} t |\Psi|^2.
\endaligned
$$
Taking $L^2$-norm and using the simple triangle inequality yield
$$
\aligned
&\big\| \big( 1-\chi(r-2t) \big)^{1/2}  \langle t-r \rangle |\del \del \widetilde{\phi}^\Delta | \big\|
\\
\lesssim
&\big\| \del \Gamma \widetilde{\phi}^\Delta \big\| 
+
\big\| \del \widetilde{\phi}^\Delta \big\|
+
\big\| t |\Psi|^2 \big\|
\\
\lesssim
& \eps + (C_1 \eps)^2 \langle t \rangle^\delta,
\endaligned
$$ 
in which we used \eqref{eq:est-phi-Delta2} in the last step. Thus we obtain
$$
\aligned
&\big\| \big( 1-\chi(r-2t) \big)^{1/2} \langle t-r \rangle  \widetilde{\phi} \big\|
\\
\lesssim
&\big\| \big( 1-\chi(r-2t) \big)^{1/2}  \langle t-r \rangle |\del \del \widetilde{\phi}^\Delta | \big\|
\\
\lesssim
&\eps + (C_1 \eps)^2\langle t \rangle^\delta.
\endaligned
$$

In the same way (with \eqref{eq:est-phi-Delta2}), we get \eqref{eq:est-010}. The proof is done.
\end{proof}

\begin{lemma}\label{lem:est-100}
The following bounds hold true
\bel{eq:est-100}
\aligned
\big\| \big(1-\chi(r/2t) \big)  \langle t-r \rangle^{1-\delta} \Gamma^J \widetilde{\phi}  \big\|
\lesssim
\eps + (C_1 \eps)^2,
\qquad
|J| \leq N-2.
\endaligned
\ee
\end{lemma}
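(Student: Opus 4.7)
The strategy mirrors the proofs of Lemmas \ref{lem:est-020} and \ref{lem:est-010}: pass through the reformulation $\widetilde\phi=\Delta\widetilde\phi^\Delta$ and exploit the Hessian decay formula of Proposition \ref{prop:wave-extra}, refined so as to deliver a bound uniform in $t$ rather than one growing like $\langle t\rangle^\delta$. Writing $\Gamma^J\widetilde\phi$ as a linear combination of $\del\del\Gamma^{J'}\widetilde\phi^\Delta$ with $|J'|\leq |J|\leq N-2$ (using Lemma 2.3 to handle the $[\Gamma,\Delta]$ commutators), on the support of $1-\chi(r/2t)\subset\{r\leq 4t\}$ I would split into the two subregions $r\leq 3t$ and $r\geq 3t/2$. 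In the outer subregion \eqref{eq:w-Hessian2} contributes the harmless factor $r/\langle t\rangle^2\lesssim 1/t$, so that after multiplication by $\langle t-r\rangle^{1-\delta}\lesssim t^{1-\delta}$ the overall weight is $\lesssim t^{-\delta}$ and trivially absorbs the $\langle t\rangle^\delta$ energy growth. In the inner subregion, multiplying \eqref{eq:w-Hessian1} by $\langle t-r\rangle^{1-\delta}$ yields
\[
\langle t-r\rangle^{1-\delta}|\del\del\Gamma^{J'}\widetilde\phi^\Delta|\lesssim \langle t-r\rangle^{-\delta}\!\!\sum_{|K|\leq |J'|+1}|\del\Gamma^K\widetilde\phi^\Delta| + t\langle t-r\rangle^{-\delta}|\Gamma^{J'}|\Psi|^2|.
\]

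The heart of the matter is therefore to upgrade the natural energy estimate \eqref{eq:est-phi-Delta} to the $t$-uniform weighted bound
\[
\|\langle t-r\rangle^{-\delta}\chi^{1/2}(t-r)\del\Gamma^K\widetilde\phi^\Delta\| + \|(1-\chi(t-r))^{1/2}\del\Gamma^K\widetilde\phi^\Delta\|\lesssim \eps + (C_1\eps)^2,\qquad |K|\leq N-1.
\]
To get this I would apply Proposition \ref{prop:wave-extra3} with $\eta=2\delta$ to $w=\Gamma^K\widetilde\phi^\Delta$ (source $\Gamma^K|\Psi|^2$), use Cauchy--Schwarz on the spacetime integral, and close via the elementary Gronwall variant ``$A(t)^2\leq C\eps^2 + \int_0^t B(\tau)A(\tau)\,d\tau$ implies $A(t)\leq C\eps + C\int_0^t B(\tau)\,d\tau$''. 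This reduces everything to proving $\int_0^t B(\tau)\,d\tau\lesssim (C_1\eps)^2$ uniformly in $t$, where $B(\tau)$ is the $L^2$ norm of $\Gamma^K|\Psi|^2$ weighted by $\sim\langle\tau-r\rangle^{-\delta}$ on $\{r\leq\tau\}$ and $\sim 1$ on $\{r\geq\tau\}$.

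The integrability of $B$ is the linchpin, and it forces full use of the pointwise decay hierarchy of $\Psi$ recorded in the $X$ norm. Inside the cone, the extra Klein--Gordon decay $|\Gamma^{I_1}\Psi|\lesssim C_1\eps\langle t-r\rangle/\langle t+r\rangle^2$ (valid for $|I_1|\leq N-7$) combines with $\langle\tau-r\rangle^{-\delta}$ to produce the $L^\infty$ factor $\lesssim C_1\eps\langle\tau\rangle^{-1-\delta}$; pairing with $\|\Gamma^{I_2}\Psi\|\lesssim C_1\eps$ (uniform for $|I_2|\leq N-1$ by the ghost-weight norm in $X$) gives the integrable rate $\langle\tau\rangle^{-1-\delta}$. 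Outside the cone, the decisive estimate is the last line of the $X$ norm, $\chi(r-t)\langle t+r\rangle^{5/4-\delta}|\Gamma^{I_1}\Psi|\lesssim C_1\eps$, producing the integrable rate $\langle\tau\rangle^{-5/4+\delta}$; the narrow transition strip $\tau\leq r\leq\tau+2$ is dealt with by the area estimate $|\{r\sim\tau\}|\sim\tau$ paired with $|\Psi|\lesssim\langle t+r\rangle^{-1}$. Once this refined energy bound is in hand, combining it with the Hessian inequality above closes the proof for $|J|\leq N-2$: the first term there is directly absorbed (since $|J'+1|\leq N-1$), and the residual source satisfies $\|t\langle t-r\rangle^{-\delta}\Gamma^{J'}|\Psi|^2\|\lesssim (C_1\eps)^2\langle t\rangle^{-\delta}$ by the very same analysis of $B$ used above. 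The primary obstacle is the refined energy estimate itself, since escaping the $\langle t\rangle^\delta$ loss of the naive energy requires the ghost-weight framework of Proposition \ref{prop:wave-extra3} to be coupled \emph{simultaneously} with the sharp pointwise decay of $\Psi$ both inside and outside the light cone.
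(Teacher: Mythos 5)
Your proposal reproduces the paper's strategy almost line-for-line: work with $\widetilde\phi^\Delta$, apply the ghost-weight energy estimate \eqref{eq:EE-wave} of Proposition \ref{prop:wave-extra3} to obtain a $t$-uniform weighted bound on $\del\Gamma^K\widetilde\phi^\Delta$, split the source integral into inside-cone and outside-cone pieces, and drive the integrability from the two sharp pointwise decays for $\Psi$ that the $X$ norm records (the interior $\langle t-r\rangle\langle t+r\rangle^{-2}$ Klein--Gordon gain, and the exterior $\langle t+r\rangle^{-5/4+\delta}$ rate), then pass back to $\widetilde\phi$ through the weighted Hessian inequality. The source estimate inside and outside the cone, the role of the transition strip, and the final absorption of the residual source --- all of these match the paper.

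There are two small but genuine variations. First, to control the spacetime source integral the paper does not run a Gronwall loop; it simply plugs in the \emph{unweighted} a priori bound $\|\del_t\Gamma^K\widetilde\phi^\Delta\|\lesssim C_1\eps\langle\tau\rangle^\delta$ (already available from \eqref{eq:est-phi-Delta2}) and then shows the remaining time integral converges despite the extra $\langle\tau\rangle^\delta$ factor. Your Gronwall variant $A^2\leq C\eps^2+\int BA$ is also valid, is slightly more self-contained (it never invokes the unweighted energy bound), and in exchange places a marginally weaker integrability requirement on $B$; the trade-off is essentially aesthetic since both versions close. Second, your explicit treatment of the annulus $3t/2\leq r\leq 4t$ via \eqref{eq:w-Hessian2} is a useful clarification: the paper's final step quotes \eqref{eq:w-Hessian1} for $r\leq 3t$ only, even though the weight $1-\chi(r/2t)$ lives on $r\leq 4t$, so some additional remark is needed there, and the one you supply (the factor $r/\langle t\rangle^2\lesssim 1/t$ beats the $\langle t-r\rangle^{1-\delta}\lesssim t^{1-\delta}$ loss) is exactly the right one.
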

\begin{proof}
We work with the $\Gamma^I \widetilde{\phi}^\Delta$ equation with $|I| \leq N-1$. The energy estimates \eqref{eq:EE-wave} give us
$$
\aligned
&\big\| \big( (t-r)^{-2\delta} \chi(t-r) + 1 - \chi(t-r) \big)^{1/2}   \del \Gamma^I \widetilde{\phi}^\Delta \big\|^2
\\
\lesssim
&\Ecal_{gst} (t_0, \Gamma^I \widetilde{\phi}^\Delta)
+
\int_{t_0}^t \int_{\RR^2} \Big| \big( (\tau-r)^{-2\delta} \chi(\tau-r) + 1 - \chi(\tau-r) \big) \Gamma^I |\Psi|^2 \del_t \Gamma^I \widetilde{\phi}^\Delta \Big| \, dxd\tau.
\endaligned
$$
We need to bound the above spacetime integral, and we find
$$
\aligned
&\int_{t_0}^t \int_{\RR^2} \Big| \big( (\tau-r)^{-2\delta} \chi(\tau-r) + 1 - \chi(\tau-r) \big) \Gamma^I |\Psi|^2 \del_t \Gamma^I \widetilde{\phi}^\Delta \Big| \, dxd\tau
\\
\lesssim
& \int_{t_0}^t \Big\| \big( (\tau-r)^{-2\delta} \chi(\tau-r) + 1 - \chi(\tau-r) \big) \Gamma^I |\Psi|^2 \Big\| \big\| \del_t \Gamma^I \widetilde{\phi}^\Delta \big\| \, d\tau
\\
\lesssim
& C_1 \eps \int_{t_0}^t \Big\| \big( (\tau-r)^{-2\delta} \chi(\tau-r) + 1 - \chi(\tau-r) \big) \Gamma^I |\Psi|^2 \Big\| \langle \tau \rangle^{\delta} \, d\tau.
\endaligned
$$
We then do the estimates in different regions (note the relation $1\lesssim \la t-r \ra \lesssim 1$ holds when $|t-r| \lesssim 1$), and we proceed to have
$$
\aligned
&\int_{t_0}^t \Big\| \big( (\tau-r)^{-2\delta} \chi(\tau-r) + 1 - \chi(\tau-r) \big) \Gamma^I |\Psi|^2 \Big\| \langle \tau \rangle^{\delta} \, d\tau
\\
\lesssim
&\int_{t_0}^t \big\|  \langle \tau-r\rangle^{-2\delta}  \Gamma^I |\Psi|^2 \big\| \langle \tau \rangle^{\delta} \, d\tau
+
\int_{t_0}^t \big\|  \chi(r-\tau)  \Gamma^I |\Psi|^2 \big\| \langle \tau \rangle^{\delta} \, d\tau
\\
=: 
& A_1 +A_2.
\endaligned
$$
To estimate $A_1$ we utilise the spacetime integral bounds in the ghost weight energy estimates to get
$$
\aligned
A_1
\lesssim
&\sum_{|I_1| \leq N-1, |I_2| \leq N-7} \int_{t_0}^t \big\| \Gamma^{I_1} \Psi \big\| 
\big\| \langle \tau-r\rangle^{-2\delta} \Gamma^{I_2} \Psi \big\|_{L^\infty} \langle \tau \rangle^{\delta}  \, d\tau
\\
\lesssim
&(C_1 \eps)^2  \int_{t_0}^t \langle \tau \rangle^{-1-\delta} \, d\tau
\\
\lesssim
&(C_1 \eps)^2.
\endaligned
$$
For the term $A_2$, we have
$$
\aligned
A_2
\lesssim
&\sum_{|I_1|\leq N-1, |I_2| \leq N-8} \int_{t_0}^t \big\|  \Gamma^{I_1} \Psi \big\| \big\|  \chi(r-\tau)  \Gamma^{I_2} \Psi| \big\|_{L^\infty} \langle \tau \rangle^{\delta} \, d\tau
\\
\lesssim
&(C_1 \eps)^2 \int_{t_0}^t \langle \tau \rangle^{-5/4+3\delta} \, d\tau
\lesssim
(C_1 \eps)^2.
\endaligned
$$

Gathering the above estimates, we arrive at
$$
\big\| \big( (t-r)^{-2\delta} \chi(t-r) + 1 - \chi(t-r) \big)^{1/2}   \del \Gamma^I \widetilde{\phi}^\Delta \big\|
\lesssim
\eps + (C_1 \eps)^{3/2},
\qquad
|I| \leq N-1.
$$

Finally, recall again the estimates for the Hessian of wave component in Proposition \ref{prop:wave-extra}
$$
\big| \del \del \Gamma^J \widetilde{\phi}^\Delta \big|
\lesssim
{1\over \langle t-r\rangle} \sum_{|I| \leq |J|+1} \big| \del  \Gamma^J \widetilde{\phi}^\Delta\big|  + {\langle t \rangle \over \langle t-r \rangle} \big|\Gamma^J |\Psi|^2 \big|,
\qquad
r\leq 3t,
$$
and we further obtain (for $|J|\leq N-2$)
$$
\aligned
&\big\| \big(1-\chi(r/2t) \big)  \langle t-r \rangle^{1-\delta} \Gamma^J \widetilde{\phi}  \big\|
\\
\lesssim
&\big\| \big( (t-r)^{-2\delta} \chi(t-r) + 1 - \chi(t-r) \big)^{1/2} \langle t-r \rangle \Gamma^J \widetilde{\phi}  \big\|
\\
\lesssim
&\sum_{|J_1| \leq N-2} \big\| \big( (t-r)^{-2\delta} \chi(t-r) + 1 - \chi(t-r) \big)^{1/2} \langle t-r \rangle \del \del \Gamma^{J_1}  \widetilde{\phi}^\Delta  \big\|
\\
\lesssim
&\eps + (C_1 \eps)^2.
\endaligned
$$

The proof is done.
\end{proof}

\begin{lemma}\label{lem:est-400}
We have for $|I|\leq N-5$
\bel{eq:200}
\big| \Gamma^I \widetilde{\phi} \big|
\lesssim
\big( \eps + (C_1 \eps)^2 \big) \langle t-r \rangle^{-1+\delta} \langle t+r \rangle^{-1/2},
\qquad
t/2 \leq r \leq 2t.
\ee
\end{lemma}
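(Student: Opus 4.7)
The plan is to upgrade the weighted $L^2$ estimate of Lemma \ref{lem:est-100} to a pointwise bound via the radial Klainerman-Sobolev inequality of Proposition \ref{prop:S}, localized by the same cut-off $\theta(t,r) := 1-\chi(r/2t)$ that already appears in Lemma \ref{lem:est-100}. Note that $\theta \equiv 1$ on the target region $\{t/2 \leq r \leq 2t\}$, $\theta$ depends only on $r/t$, and $\la r \ra \sim \la t+r \ra$ there. Applying Proposition \ref{prop:S} with $\eta = 1-\delta$ to $\theta\,\Gamma^I \widetilde\phi$ yields, for $(t,x)$ in the target region,
\[
|\la r-t \ra^{1-\delta} \Gamma^I \widetilde\phi(t,x)|
\lesssim
\la r \ra^{-1/2} \sum_{|J| \leq 2} \big\| \la r-t \ra^{1-\delta} \Lambda^J(\theta\,\Gamma^I \widetilde\phi)(t,\cdot)\big\|,
\]
with $\Lambda \in \{\del_r, \Omega\}$.

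Expanding by Leibniz produces a principal term $\theta\,\Lambda^J \Gamma^I \widetilde\phi$ together with corrections where $\Lambda$ falls on $\theta$. For the principal term, writing $\del_r = (x^a/r)\del_a$ with bounded coefficients (and using the identity $\del_r(x^a/r) = 0$ to avoid further terms when iterating $\del_r$), together with $[\del_r, \Omega] = 0$ and the commutator bound \eqref{eq1-10-10-2021}, one reduces to $\sum_{|I'| \leq |I|+|J|} |\Gamma^{I'} \widetilde\phi|$. For $|I| \leq N-5$ and $|J| \leq 2$ this stays within $|I'| \leq N-3 \leq N-2$, so Lemma \ref{lem:est-100} directly furnishes $\|\theta\,\la t-r \ra^{1-\delta} \Gamma^{I'} \widetilde\phi\| \lesssim \eps + (C_1 \eps)^2$, contributing $\lesssim (\eps + (C_1\eps)^2)\la t+r \ra^{-1/2}\la t-r \ra^{-1+\delta}$ to the bound on $|\Gamma^I \widetilde\phi|$.

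The correction terms involve $\Lambda^{J_1}\theta$ with $|J_1| \geq 1$. Since $\theta$ depends only on $r/t$, $\Omega\theta = 0$ and $|\del_r^k \theta| \lesssim t^{-k}$, supported in the annulus $A := \{2t \leq r \leq 4t\}$. On $A$ we have $r \sim t \sim \la t-r \ra$ and $\chi(r-t)=1$ for $t$ large, so Lemma \ref{lem:est-020} combined with the $\la t-r \ra \sim t$ weight gives $\|\Gamma^{I'}\widetilde\phi\,\mathbf{1}_A\| \lesssim (\eps + (C_1\eps)^2)\la t \ra^{\delta-1}$. Multiplied by $\la r-t \ra^{1-\delta}|\del_r^{|J_1|}\theta| \lesssim t^{1-\delta-|J_1|}$, each correction is $\lesssim (\eps + (C_1\eps)^2)\,t^{-\delta}$, uniformly bounded and negligible after combining with the $\la r \ra^{-1/2}$ prefactor from Proposition \ref{prop:S}. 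Summing everything delivers \eqref{eq:200}. The main bookkeeping hurdle is controlling these correction terms arising from the transition zone of $\theta$; they are absorbed by the $L^2$ bound on $A$ from Lemma \ref{lem:est-020} together with the extra $t^{-k}$ decay of $\del_r^k \theta$.
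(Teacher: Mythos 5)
Your proposal is correct and follows essentially the same route as the paper: apply the radial Sobolev inequality of Proposition \ref{prop:S} to a cut-off multiple of $\la t-r\ra^{1-\delta}\Gamma^I\widetilde\phi$, control the principal term via Lemma \ref{lem:est-100}, and absorb the cut-off correction terms using the extra $t^{-k}$ decay of $\del_r^k\theta$ on the transition annulus.

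Two small remarks. First, the paper uses a two-sided cut-off $\big(1-\chi(r/2t)\big)\chi(1/2+2r/t)$, whereas you use only the outer factor $\theta=1-\chi(r/2t)$; this is slightly cleaner because it leaves just one transition zone $A=\{2t\le r\le 4t\}$ (outside the light cone, where $\chi(r-t)\equiv 1$), so you can invoke Lemma \ref{lem:est-020} directly for the corrections. Both localizations work since Lemma \ref{lem:est-100} already covers the whole ball $r\le 2t$. Second, your index count for the corrections is slightly off: in the terms $\Lambda^{J_1}\theta\cdot\Lambda^{J_2}\Gamma^I\widetilde\phi$ with $|J_1|\ge 1$ and $|J_1|+|J_2|\le 2$ one can have $|J_2|=1$, so the order on $\widetilde\phi$ climbs to $N-4$; Lemma \ref{lem:est-020} then gives only $\|\mathbf{1}_A\,\Gamma^{I'}\widetilde\phi\|\lesssim (\eps+(C_1\eps)^2)\la t\ra^{\delta-1/2}$ rather than $\la t\ra^{\delta-1}$. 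This does not break the argument: multiplying by $\la r-t\ra^{1-\delta}|\del_r^{|J_1|}\theta|\lesssim t^{1-\delta-|J_1|}$ with $|J_1|=1$ still yields a contribution $\lesssim (\eps+(C_1\eps)^2)\la t\ra^{-1/2}$, which is bounded, so the final estimate closes as you claim.
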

\begin{proof}
The proof follows from Lemma \ref{lem:est-100} and the weighted Sobolev inequality in Proposition \ref{prop:S}.

For $|I|\leq N-5$ we have (for $\Lambda \in \{ \del_r, \Omega_{ab} \}$)
$$
\aligned
&\langle r\rangle^{1/2} \big| \big(1-\chi(r/2t) \big) \chi(1/2+2r/t) \langle t-r \rangle^{1-\delta}  \Gamma^I \widetilde{\phi} \big|
\\
\lesssim
&\sum_{|J|\leq 3} \sup_{t\geq t_0} \Big\| \Lambda^J \Big( \big(1-\chi(r/2t) \big) \chi(1/2+2r/t) \langle t-r \rangle^{1-\delta} \Gamma^I \widetilde{\phi} \Big) \Big\|
\endaligned
$$

Note that the rotation vector field $\Omega$ commutes with $r, t$ (see also Proposition \ref{prop:S}), which gives us
$$
\aligned
&\sum_{|J|\leq 3} \Big\| \Lambda^J \Big( \big(1-\chi(r/2t) \big) \chi(1/2+2r/t) \langle t-r \rangle^{1-\delta} \Gamma^I \widetilde{\phi} \Big) \Big\|
\\
\lesssim
&\sum_{|I_1|\leq N-2} \Big\|  \big(1-\chi(r/2t) \big) \chi(1/2+2r/t) \langle t-r \rangle^{1-\delta} \Gamma^{I_1} \widetilde{\phi}  \Big\|
\\
+
&\sum_{|I_1|\leq N-3}\Big\| \la  t\ra^{-1}  \langle t-r \rangle^{1-\delta} \Gamma^{I_1} \widetilde{\phi}  \Big\|
\\
\lesssim
&\eps + (C_1 \eps)^2,
\endaligned
$$
which leads us to
$$
\aligned
&\big| \big(1-\chi(r/2t) \big) \chi(1/2+2r/t) \langle t-r \rangle^{1-\delta}  \Gamma^I \widetilde{\phi} \big|
\\
\lesssim
&\big( \eps + (C_1 \eps)^2 \big) \langle r\rangle^{-1/2},
\qquad
|I| \leq N-5.
\endaligned
$$

The proof is complete by noting $\la t+r\ra \lesssim \la r\ra \lesssim \la t+r\ra$ when $r \geq t/2$.
\end{proof}

%
%

\begin{lemma}
The following pointwise bounds are valid
\bel{eq:est-050}
\big| \Gamma^I \widetilde{\phi} \big|
\lesssim
\big( \eps + (C_1 \eps)^2 \big) \langle t-r \rangle^{-1+\delta} \langle t+r \rangle^{-1/2},
\qquad
|I| \leq N-8.
\ee
\end{lemma}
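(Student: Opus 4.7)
The plan is to partition spacetime into three regions---the near-light-cone region $\{t/2\leq r\leq 2t\}$, the deep interior $\{r\leq t/2\}$, and the deep exterior $\{r\geq 2t\}$---and to handle each with a different tool. In the near-light-cone region, the preceding Lemma \ref{lem:est-400} already provides the bound, since $N-8 \leq N-5$.

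For the deep exterior $\{r\geq 2t\}$, I would apply the weighted Sobolev inequality of Proposition \ref{prop:S} with $\eta = 1$ to $\Gamma^I\widetilde{\phi}$, and combine it with the low-order estimate from Lemma \ref{lem:est-020} (which is valid for $|I|\leq N-5$; note that $\chi^{1/2}(r-t)\equiv 1$ on $\{r\geq 2t\}$). This yields
$$
|\Gamma^I\widetilde{\phi}| \lesssim \la r\ra^{-1/2}\la r-t\ra^{-1}\bigl(\eps + (C_1\eps)^2\bigr)\la t\ra^{\delta}.
$$
Since $\la r\ra \sim \la t+r\ra$ and $\la t\ra \leq \la r-t\ra$ on $\{r\geq 2t\}$, the factor $\la t\ra^\delta$ is absorbed into $\la r-t\ra^{\delta}$, delivering the stated bound.

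For the deep interior $\{r\leq t/2\}$, I would apply Klainerman-Sobolev (Proposition \ref{prop:K-S}) to the weighted, localized function $\psi := \chi_*(r,t)\,\la t-r\ra^{1-\delta}\Gamma^I\widetilde{\phi}$, where $\chi_*$ is a smooth cutoff equal to one on $\{r\leq t/2\}$ and supported in $\{r\leq t\}$. After Leibniz-expanding $\Gamma^J\psi$ for $|J|\leq 3$, each factor $\Gamma^{J_1}\chi_*$ and $\Gamma^{J_2}\la t-r\ra^{1-\delta}$ must be controlled: rotations annihilate both, translations give bounded corrections, and for Lorentz boosts the identity $L_a(t-r) = -x_a(t-r)/r$ together with $|x_a|/r \leq 1$ gives $|L_a\la t-r\ra^{1-\delta}| \lesssim \la t-r\ra^{1-\delta}$, so each such factor is dominated by $\la t-r\ra^{1-\delta}$ on the support of $\chi_*$. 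The resulting $L^2$ norms collapse to expressions of the form $\|\mathbf{1}_{\{r\leq t\}}\la t-r\ra^{1-\delta}\Gamma^K\widetilde{\phi}\|$ with $|K|\leq |I|+3\leq N-5 \leq N-2$, which are controlled by Lemma \ref{lem:est-100}. Proposition \ref{prop:K-S} then yields $|\la t-r\ra^{1-\delta}\Gamma^I\widetilde{\phi}| \lesssim (\eps + (C_1\eps)^2)\la t+r\ra^{-1/2}$ on $\{r\leq t/2\}$, which is the target.

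The main obstacle will be the deep interior: a naive use of standard Sobolev embedding on $\RR^2$ applied to Lemma \ref{lem:est-100} only recovers $\la t\ra^{-1+\delta}$, whereas the target requires an additional $\la t\ra^{-1/2}$, so the boost-based Klainerman-Sobolev inequality is essential. The fragile point is the commutator bookkeeping: factors of $t/r$ that could arise from $L_a$ acting on $\la t-r\ra^{1-\delta}$ must be tamed, and indeed they are tamed by the cancellation $L_a(t-r) = -x_a(t-r)/r$ together with $|x_a|/r \leq 1$, which is what allows the induction on $|J|$ to go through without losing factors of $t$.
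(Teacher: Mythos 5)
Your partition into three regions is sensible, and your treatment of the near-light-cone region (via Lemma \ref{lem:est-400}) and the deep exterior $\{r\geq 2t\}$ (via Proposition \ref{prop:S} with Lemma \ref{lem:est-020}) essentially coincide with the paper's. But your deep-interior argument takes a genuinely different route from the paper, and that route has a gap.

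The paper, in $\{r\leq t/2\}$, never weights the Klainerman--Sobolev inequality. It instead works with $\widetilde{\phi}^\Delta$: ghost weight energy estimates give $\Ecal_{gst}(t,\Gamma^{J}\widetilde{\phi}^\Delta)^{1/2}\lesssim \eps+(C_1\eps)^2\la t\ra^\delta$ for $|J|\leq N-1$, Proposition \ref{prop:K-S} applied to the \emph{unweighted} $\del\Gamma^{J}\widetilde{\phi}^\Delta$ gives $\la t+r\ra^{-1/2+\delta/2}$ decay for $|J|\leq N-4$, and then the Hessian-decay Proposition \ref{prop:wave-extra} converts one extra derivative into a full factor $\la t-r\ra^{-1}$, so that $\widetilde{\phi}=\Delta\widetilde{\phi}^\Delta$ carries $\la t-r\ra^{-1}\la t+r\ra^{-1/2+\delta}$. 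No weight is ever commuted through $\Gamma^J$.

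Your proposal instead applies Proposition \ref{prop:K-S} to $\psi:=\chi_*\la t-r\ra^{1-\delta}\Gamma^I\widetilde{\phi}$, and the commutator bookkeeping you give is insufficient. You verify only the \emph{first}-order boost identity $L_a\la t-r\ra^{1-\delta}=-(1-\delta)\la t-r\ra^{-1-\delta}(t-r)^2\,x_a/r$, which is indeed bounded by $\la t-r\ra^{1-\delta}$. But $\la t-r\ra^{1-\delta}$ is only Lipschitz (not $C^1$) at $r=0$, and Proposition \ref{prop:K-S} requires $\|\Gamma^J\psi\|$ with $|J|\leq 3$. Once a second vector field hits the radial factor $x_a/r$ produced by the first boost, one finds
$$
L_b\Big(\frac{x_a}{r}\Big)=\frac{t}{r}\Big(\delta_{ab}-\frac{x_ax_b}{r^2}\Big),\qquad
\del_b\Big(\frac{x_a}{r}\Big)=\frac{\delta_{ab}}{r}-\frac{x_ax_b}{r^3},
$$
i.e.\ $O(t/r)$ and $O(1/r)$ terms. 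On the deep-interior support $\{r\leq t\}$ these are neither bounded nor $L^2$-integrable in $\RR^2$ near the spatial origin: $\int_0^\epsilon (1/r)^2\,r\,dr$ and $\int_0^\epsilon (t/r)^2\,r\,dr$ both diverge, and there is no reason for $\Gamma^K\widetilde{\phi}$ to vanish at $r=0$ to compensate. The cancellation $L_a(t-r)=-x_a(t-r)/r$ you invoke is exactly what makes the \emph{first}-order commutator harmless, but it does not prevent the higher-order $1/r$ and $t/r$ singularities, so the claimed induction ``goes through without losing factors of $t$'' does not hold. (This is also why the paper's weighted Sobolev inequality, Proposition \ref{prop:S}, deliberately uses only $\Lambda\in\{\del_r,\Omega\}$ --- which commute benignly with $\la t-r\ra^\eta$ --- and is only applied in the exterior.) To repair the deep-interior part, either follow the paper's Hessian route via $\widetilde{\phi}^\Delta$ and Proposition \ref{prop:wave-extra}, or replace $\la t-r\ra^{1-\delta}$ by a weight that is genuinely smooth at $r=0$ and equivalent to $\la t\ra^{1-\delta}$ there, in which case the singular commutators disappear.
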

\begin{proof}
Thanks to the estimates in Lemma \ref{lem:est-400}, we only need to show \eqref{eq:est-050} holds in the regions $\{ r \leq t/2 \}$ and $\{ r \geq 2t\}$ for large $t$.

\paragraph{Case I: $\{ r \leq t/2 \}$.}
Consider first the $\Gamma^{J_1} \widetilde{\phi}^\Delta$ equations with $|J_1| \leq N-1$, and the energy estimates \eqref{eq:gst-wKG2} yields
$$
\Ecal_{gst} (t, \Gamma^{J_1} \widetilde{\phi}^\Delta)^{1/2}
\lesssim
\Ecal_{gst} (t_0, \Gamma^{J_1} \widetilde{\phi}^\Delta)^{1/2}
+
\int_{t_0}^t \big\| \Gamma^{J_1} |\Psi|^2 \big\| \, d\tau.
$$
We proceed to bound
$$
\aligned
\big\| \Gamma^{J_1} |\Psi|^2 \big\|
\lesssim
\sum_{|I_1| \leq N-1, \, |I_2| \leq N-5} \big\| \Gamma^{I_1} \Psi \big\| \, \big\| \Gamma^{I_2} \Psi \big\|_{L^\infty}
\lesssim
(C_1 \eps)^2 \langle \tau \rangle^{-1}.
\endaligned
$$
Thus we have
$$
\Ecal_{gst} (t, \Gamma^{J_1} \widetilde{\phi}^\Delta)^{1/2}
\lesssim
\epsilon
+
(C_1 \eps)^2 \int_{t_0}^t \langle \tau \rangle^{-1} \, d\tau
\lesssim
\eps + (C_1 \eps)^2 \langle t \rangle^{\delta},
\qquad
|J_1| \leq N-1.
$$

Next, we apply the Klainerman-Sobolev inequality in Proposition \ref{prop:K-S} to get
$$
\big| \del \Gamma^J \widetilde{\phi}^\Delta \big|
\lesssim
\big( \eps + (C_1 \eps)^2 \big) \langle t+r \rangle^{-1/2+\delta/2},
\qquad
|J| \leq N-4.
$$
Then Proposition \ref{prop:wave-extra} allows us to obtain extra $\langle t-r \rangle$-decay with one more derivative, i.e.,
$$
\big| \del \del \Gamma^J \widetilde{\phi}^\Delta \big|
\lesssim
\big( \eps + (C_1 \eps)^2 \big) \langle t-r \rangle^{-1} \langle t+r \rangle^{-1/2+\delta/2},
\qquad
|J| \leq N-5.
$$
Finally, recalling the relation $\widetilde{\phi} = \Delta \widetilde{\phi}^\Delta$ gives us
$$
\big| \Gamma^J \widetilde{\phi} \big|
=
\big| \Gamma^J \Delta \widetilde{\phi}^\Delta \big|
\lesssim
\sum_{|I_1| \leq |J|} \big| \del \del \Gamma^I \widetilde{\phi}^\Delta \big|
\lesssim
\big(\eps + (C_1 \eps)^2\big)  \langle t-r \rangle^{-1} \langle t+r \rangle^{-1/2+\delta},
\qquad
|J| \leq N-5,
$$
and hence for $|J| \leq N-5$ we have
$$
\big| \Gamma^J \widetilde{\phi} \big|
\lesssim
\big(\eps + (C_1 \eps)^2\big) \langle t-r \rangle^{-1+\delta} \langle t+r \rangle^{-1/2},
\qquad
\{r \leq t/2\}.
$$

\paragraph{Case II: $\{ r \geq 2t \}$.}
Recall the estimates in Lemma \ref{lem:est-020}
$$
\big\| \chi^{1/2}(r-t) \langle r-t \rangle \Gamma^J \widetilde{\phi} \big\|
\lesssim
\eps+(C_1 \eps)^2 \langle t \rangle^{\delta},
\qquad
|J| \leq N-5,
$$
and this deduces that for large $t$ it holds   
$$
\big\| \chi(-6t/r + 5) \langle r-t \rangle \Gamma^J \widetilde{\phi} \big\|
\lesssim
\eps+(C_1 \eps)^2 \langle t \rangle^{\delta},
\qquad
|J| \leq N-5,
$$
in which $\chi(-6t/r + 5) $ is $1$ for $r\geq 2t$, and $0$ for $r\leq 3t/2$.
Then we apply again the weighted Sobolev inequality in Proposition \ref{prop:S} to derive
$$
\chi(-6t/r + 5) \langle r-t \rangle \big| \Gamma^I \widetilde{\phi} \big|
\lesssim
\big(  \eps+(C_1 \eps)^2 \big) \langle r \rangle^{-1/2+\delta},
\qquad
|I| \leq N-8.
$$

Finally, combining the afore obtained results in Lemma \ref{lem:est-400}, we finish the proof.
\end{proof}

\begin{lemma}
We have the following uniform bounds
\bel{eq:est-030}
\aligned
\Ecal_{gst, 1} (t, \Gamma^I \widetilde{\Psi})^{1/2}
&\lesssim
\eps + (C_1 \eps)^{3/2},
\qquad
&|I| \leq N-1,
\\
\Ecal_{gst} (t, \Gamma^I \widetilde{\phi})^{1/2}
&\lesssim
\eps + (C_1 \eps)^{3/2},
\qquad
&|I| \leq N-2.
\endaligned
\ee
\end{lemma}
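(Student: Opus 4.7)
The plan is to apply the ghost-weight energy estimates \eqref{eq:gst-wKG1} and \eqref{eq:gst-wKG2} to the commuted linear equations \eqref{eq:solution-map2} at the intermediate orders $|I|\leq N-1$ (for $\widetilde\Psi$) and $|I|\leq N-2$ (for $\widetilde\phi$), and then to close the resulting spacetime integrals by pairing the uniform-in-$t$ ghost-weight integrability built into the $X$-norm at exactly these orders with the sharp low-order pointwise decay of the inputs $\phi$ and $\Psi$.

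For $\widetilde\Psi$, I would start from
\[
\Ecal_{gst,1}(t,\Gamma^I\widetilde\Psi)\lesssim\eps^2+\int_0^t\!\!\int|\Gamma^I(\phi\Psi)|\,|\del_t\Gamma^I\widetilde\Psi|\,dxd\tau,\qquad|I|\leq N-1,
\]
and decompose the nonlinearity via Leibniz. The critical piece is the product where $\Psi$ carries the highest index; there I would apply a spacetime Cauchy--Schwarz with weight $\la\tau-r\ra^{1+\delta}$: the factor $\bigl(\int\!\!\int|\Gamma^{I_2}\Psi|^2/\la\tau-r\ra^{1+\delta}\bigr)^{1/2}$ is uniformly bounded by $C_1\eps$ because this quantity sits directly inside $\Ecal_{gst,1}(t,\Gamma^{I_2}\Psi)$ at $|I_2|\leq N-1$, a uniform-in-$t$ assumption in the $X$-norm; the matching factor is handled using the improved pointwise bound $|\Gamma^{I_1}\phi|\lesssim C_1\eps\la\tau-r\ra^{-1+\delta}\la\tau+r\ra^{-1/2}$ at $|I_1|\leq N-8$ from Lemma \ref{lem:est-00a}. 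The contribution away from the cone is then immediately integrable in $\tau$, while in the near-cone annulus I would invoke the Hessian reformulation $\phi=\Delta\phi^\Delta$ combined with Proposition \ref{prop:wave-extra} to trade the marginal $\la\tau-r\ra$-loss for extra $\la\tau-r\ra$-gain, converting an apparent $\sqrt{\log t}$ divergence into a convergent integral. The symmetric piece (high-order $\phi$, low-order $\Psi$) is easier: pair the pointwise $|\Gamma^{I_2}\Psi|\lesssim C_1\eps\la\tau+r\ra^{-1}$ at $|I_2|\leq N-5$ (from the $X$-norm) with the uniform ghost-weight spacetime integral of $G_a\Gamma^{I_1}\phi$ at $|I_1|\leq N-2$.

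For $\widetilde\phi$, I would first prove the analogue for $\widetilde\phi^\Delta$ from \eqref{eq:gst-wKG2} applied to $-\Box\Gamma^I\widetilde\phi^\Delta=\Gamma^I|\Psi|^2$ with $|I|\leq N-2$: the symmetric product $|\Psi|^2$ allows placing one Klein-Gordon factor in $L^\infty$ via the pointwise $\la\tau+r\ra^{-1}$ bound at $|I_2|\leq N-5$ and the other in $L^2$ uniformly at $|I_1|\leq N-1$, so that a spacetime Cauchy--Schwarz against the uniform ghost-weight integral of $G_a\widetilde\phi^\Delta$ closes the bound without loss. The estimate is then transferred to $\widetilde\phi=\Delta\widetilde\phi^\Delta$ through Proposition \ref{prop:wave-extra}, which converts the two spatial derivatives on $\widetilde\phi^\Delta$ into $\la t-r\ra^{-1}$ times already-controlled energies plus a small source. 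The main obstacle is the near-cone high-order-$\Psi$ interaction: the naive time integral $\int_0^t\|\Gamma^I(\phi\Psi)\|\,d\tau$ is only $O(\la t\ra^\delta)$, and it is the simultaneous availability of (a) the uniform ghost-weight $L^2$ bound on $\Psi$ at $|I_2|\leq N-1$ and (b) the sharp pointwise $\la t-r\ra^{-1+\delta}\la t+r\ra^{-1/2}$ decay of $\phi$ at $|I_1|\leq N-8$, reinforced near the cone by the Hessian identity, that makes a uniform bound possible. The loss of two derivatives from the top order $N+1$ down to $N-1$ and $N-2$ is exactly what buys access to both uniform controls at the same time.
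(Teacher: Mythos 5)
Your overall aim (uniform bounds via ghost weights plus sharp pointwise decay) is right, but the mechanisms you propose do not close for either component, and they diverge from what the paper actually does at the precise points where uniformity has to be won.

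For $\widetilde\Psi$ you start from \eqref{eq:gst-wKG1}, i.e.\ the $\int\!\!\int|F||\del_t\Gamma^I\widetilde\Psi|$ form, and then Cauchy--Schwarz against the ghost weight $\la\tau-r\ra^{-1-\delta}$. The factor $\bigl(\int\!\!\int|\Gamma^{I_2}\Psi|^2/\la\tau-r\ra^{1+\delta}\bigr)^{1/2}$ is indeed uniformly $\lesssim C_1\eps$ at $|I_2|\le N-1$. But the matching factor contains $|\del_t\Gamma^I\widetilde\Psi|$, and at $|I|=N-1$ the only bound you control a priori is $\|\del_t\Gamma^I\widetilde\Psi\|\lesssim C_1\eps\la\tau\ra^\delta$. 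Even with the sharp pointwise bound $|\Gamma^{I_1}\phi|\lesssim C_1\eps\la\tau-r\ra^{-1}\la\tau+r\ra^{-1/2+\delta}$, the matching factor is $\lesssim C_1\eps\bigl(\int_0^t\la\tau\ra^{-1+2\delta}\|\del_t\Gamma^I\widetilde\Psi\|^2\,d\tau\bigr)^{1/2}\lesssim (C_1\eps)^2\la t\ra^{2\delta}$, so the whole term scales like $(C_1\eps)^3\la t\ra^{2\delta}$, not uniformly. This is not a marginal $\log$ issue, and it is not cured by the Hessian reformulation of $\phi$: Proposition \ref{prop:wave-extra} produces exactly the $\la t-r\ra^{1-\delta}\la t+r\ra^{1/2}$-weighted decay you already quoted for $\Gamma^{I_1}\phi$, and further passes through the same $\|\del_t\Gamma^I\widetilde\Psi\|$ bottleneck. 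The paper avoids the bottleneck entirely by using \eqref{eq:gst-wKG2}, whose right-hand side is $\int_0^t\|\Gamma^I(\phi\Psi)\|\,d\tau$, and by splitting the spacetime into $\{r\ge 2\tau\}$ and $\{r\le 2\tau\}$. Inside the cone the decisive pairing is the $X$-norm bound $\bigl\|\tfrac{\la\tau+r\ra}{\la\tau-r\ra}\Gamma^{I_2}\Psi\bigr\|\lesssim C_1\eps\la\tau\ra^\delta$ at $|I_2|\le N$ against $\bigl\|\tfrac{\la\tau-r\ra}{\la\tau+r\ra}\Gamma^{I_1}\phi\bigr\|_{L^\infty}\lesssim C_1\eps\la\tau+r\ra^{-3/2+\delta}$ at $|I_1|\le N-8$; the $\la\tau-r\ra$-weights cancel and the remaining $\la\tau\ra^{-3/2+2\delta}$ integrates. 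You do not invoke the quotient-weighted $L^2$ bound on $\Psi$ at all, and without it the cancellation that makes $\int_0^t\|\Gamma^I(\phi\Psi)\|\,d\tau$ finite is missing.

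For $\widetilde\phi$ you propose to bound $\widetilde\phi^\Delta$ first via \eqref{eq:gst-wKG2} and then apply $\Delta$. But the direct estimate you write gives $\|\Gamma^I|\Psi|^2\|\lesssim (C_1\eps)^2\la\tau\ra^{-1}$ (uniform $L^2$ on the high-order $\Psi$, $\la\tau\ra^{-1}$ pointwise on the low-order $\Psi$), whose time integral diverges logarithmically; the ``Cauchy--Schwarz against the ghost-weight integral of $G_a\widetilde\phi^\Delta$'' you invoke to rescue this is circular, since that ghost-weight integral sits on the left-hand side of the very energy inequality you are trying to close and is not controlled a priori for $\widetilde\phi^\Delta$. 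The paper instead applies \eqref{eq:gst-wKG1} directly to $\widetilde\phi$ and distributes the $\la\tau-r\ra$-weight across the two factors: it uses the interior $L^2$ bound $\|\la\tau-r\ra\,\del_t\Gamma^I\widetilde\phi\|\lesssim C_1\eps\la\tau\ra^{1/2+\delta}$ (extracted from Lemmas \ref{lem:est-020} and \ref{lem:est-010}) together with the strong Klein--Gordon decay $\|\la\tau-r\ra^{-1}\Gamma^{I_1}\Psi\|_{L^\infty}\lesssim C_1\eps\la\tau\ra^{-2}$ at $|I_1|\le N-7$ (stored in the $X$-norm as $\la t-r\ra^{-1}\la t+r\ra^2|\Gamma^IV|\lesssim C_1\eps$). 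This yields $\la\tau\ra^{-3/2+2\delta}$ and closes uniformly. Again, the indispensable improved $\Psi$-decay ($\la t-r\ra\la t+r\ra^{-2}$ rather than $\la t+r\ra^{-1}$) is absent from your argument, and so is the $\la\tau-r\ra$-weighted $L^2$ bound on $\del_t\Gamma^I\widetilde\phi$.
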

\begin{proof}
Our strategy is to divide the spacetime region roughly into two parts $\{r\geq 2t \}, \{r\leq 2t\}$, and then conduct the estimates in different parts.

For the Klein-Gordon part $\widetilde{\Psi}$, the energy estimates \eqref{eq:gst-wKG2} give us for $|I| \leq N-1$
$$
\aligned
\Ecal_{gst, 1} (t, \Gamma^I \widetilde{\Psi})^{1/2}
\lesssim
\Ecal_{gst, 1} (t_0, \Gamma^I \widetilde{\Psi})^{1/2}
+
\int_{t_0}^t \big\| \Gamma^I (\phi \, \Psi) \big\| \, d\tau.
\endaligned
$$
We have
$$
\aligned
\int_{t_0}^t \big\| \Gamma^I (\phi \, \Psi) \big\| \, d\tau
\lesssim
&\int_{t_0}^t \big\| \chi(r-2\tau) \Gamma^I (\phi \, \Psi) \big\| \, d\tau
+
\int_{t_0}^t \big\| \big( 1-\chi(r-2\tau) \big) \Gamma^I (\phi \, \Psi) \big\| \, d\tau
\\
:=
& A_1 + A_2.
\endaligned
$$
We next bound these two terms separately. On one hand, we find
$$
\aligned
A_1
\lesssim
&\sum_{|I_1|+|I_2|\leq |I|} \int_{t_0}^t \big\| \chi(r-2\tau) \Gamma^{I_1} \phi \, \Gamma^{I_2} \Psi) \big\| \, d\tau
\\
\lesssim
&\sum_{|I_1|\leq |I|, |I_2|\leq N-5} \int_{t_0}^t \big\| \chi(r-2\tau) \langle r-\tau \rangle \Gamma^{I_1} \phi \big\| \, \big\|\langle \tau 
\rangle^{-1}\Gamma^{I_2} \Psi \big\|_{L^\infty} \, d\tau
\\
+
&\sum_{|I_1|\leq N-8, |I_2|\leq |I|} \int_{t_0}^t \big\| \chi(r-2\tau) \Gamma^{I_1} \phi \big\|_{L^\infty} \, \big\| \Gamma^{I_2} \Psi \big\| \, d\tau
\\
\lesssim
&(C_1 \eps)^2 \int_{t_0}^t \langle \tau \rangle^{-3/2+2\delta} \, d\tau
\lesssim (C_1 \eps)^2.
\endaligned
$$
On the other hand, we have
$$
\aligned
A_2
\lesssim
&\sum_{|I_1|+|I_2|\leq |I|} \int_{t_0}^t \big\| \big( 1-\chi(r-2\tau) \big) \Gamma^{I_1} \phi \, \Gamma^{I_2} \Psi \big\| \, d\tau
\\
\lesssim
&\sum_{\substack{|I_1|\leq |I|\\ |I_2|\leq N-7}} \int_{t_0}^t \big\| \big( 1-\chi(r-2\tau) \big)^{1/2} \langle r-\tau \rangle \Gamma^{I_1} \phi \big\| \, \big\|\big( 1-\chi(r-2\tau) \big)^{1/2}\langle r-\tau \rangle^{-1}\Gamma^{I_2} \Psi \big\|_{L^\infty} \, d\tau
\\
+
&\sum_{\substack{|I_1|\leq N-8\\ |I_2|\leq |I|}} \int_{t_0}^t \big\| \big( 1-\chi(r-2\tau) \Big)^{1/2} {\langle r-\tau \rangle \over \langle r+\tau \rangle} \Gamma^{I_1} \phi \Big\|_{L^\infty} \, \Big\| \big( 1-\chi(r-2\tau) \big)^{1/2} {\langle t+\tau \rangle \over \langle r-\tau \rangle} \Gamma^{I_2} \Psi \Big\| \, d\tau
\\
\lesssim
&(C_1 \eps)^2 \int_{t_0}^t \langle \tau \rangle^{-3/2+2\delta} \, d\tau
\lesssim (C_1 \eps)^2.
\endaligned
$$

Thus we are led to
$$
\Ecal_{gst, 1} (t, \Gamma^I \widetilde{\Psi})^{1/2}
\lesssim
\eps + (C_1 \eps)^2,
\qquad
|I| \leq N-1.
$$

For the wave part $\widetilde{\phi}$, we have, according to the energy estimates \eqref{eq:gst-wKG1}, that
$$
\aligned
\Ecal_{gst} (t, \Gamma^I \widetilde{\phi})
\lesssim
\Ecal_{gst} (t_0, \Gamma^I \widetilde{\phi})
+
\int_{t_0}^t \int_{\RR^2} \big| \Gamma^I \Delta |\Psi|^2 \, \del_t \Gamma^I \widetilde{\phi} \big| \, dx d\tau.
\endaligned
$$
Recall the estimates in Lemmas \ref{lem:est-020} and \ref{lem:est-010}, for $|I| \leq N-2$ we have 
$$
\aligned
&\big\| \langle \tau-r \rangle \del_t \Gamma^I \widetilde{\phi} \big\|
\\
\lesssim
&\big\| \chi(r-2\tau) \langle \tau-r \rangle \del_t \Gamma^I \widetilde{\phi} \big\|
+
\big\| \big( 1-\chi(r-2\tau) \big) \langle \tau-r \rangle \del_t \Gamma^I \widetilde{\phi} \big\|
\lesssim
C_1 \eps \langle \tau \rangle^{1/2+\delta}.
\endaligned
$$
Successively, we obtain
$$
\aligned
&\int_{t_0}^t \int_{\RR^2} \big| \Gamma^I \Delta |\Psi|^2 \, \del_t \Gamma^I \widetilde{\phi} \big| \, dx d\tau
\\
\lesssim
&\int_{t_0}^t \big\| \langle \tau - r \rangle^{-1} \Gamma^I \Delta |\Psi|^2 \big\|   
\big\| \langle \tau-r \rangle \del_t \Gamma^I \widetilde{\phi} \big\| \, d\tau
\\
\lesssim
&C_1 \eps \int_{t_0}^t \langle \tau \rangle^{1/2+\delta} \big\| \langle \tau - r \rangle^{-1} \Gamma^I \Delta |\Psi|^2 \big\| \, d\tau.
\endaligned
$$
By the pointwise decay for the Klein-Gordon field $\Psi$, we get
$$
\sum_{|I_1|\leq N-7}\big\| \langle \tau-r \rangle^{-1} \Gamma^{I_1} \Psi \big\|_{L^\infty}
\lesssim
C_1 \eps \langle \tau \rangle^{-2},
$$
which further gives us
$$
\big\| \langle \tau - r \rangle^{-1} \Gamma^I \Delta |\Psi|^2 \big\|
\lesssim
\sum_{\substack{|I_1|\leq N-7 \\ |I_2| \leq N}} \big\| \langle \tau-r \rangle^{-1} \Gamma^{I_1} \Psi \big\|_{L^\infty} \big\| \Gamma^{I_2} \Psi \big\|
\lesssim
(C_1 \eps)^2 \langle \tau \rangle^{-2+\delta}.
$$
Thus we arrive at
$$
\Ecal_{gst} (t, \Gamma^I \widetilde{\phi})
\lesssim
\eps^2
+
(C_1 \eps)^3 \int_{t_0}^t \langle \tau \rangle^{-3/2+2\delta} \, d\tau
\lesssim
\eps^2 + (C_1 \eps)^3,
\qquad
|I| \leq N-2.
$$

The proof is done.
\end{proof}

\begin{lemma}\label{lem:est-800}
We get
\bel{eq:est-500}
\chi(r-t)  \big|\Gamma^I \widetilde{\Psi} \big|
\lesssim
\big( \eps + (C_1 \eps)^{3/2} \big) \langle t+r\rangle^{-5/4+\delta},
\qquad
|I|\leq N-8.
\ee
\end{lemma}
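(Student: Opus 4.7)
The support of $\chi(r-t)$ lies in $\{r \geq t+1\}$; I will partition this region into $\{t+1 \leq r \leq 3t\}$ and $\{r \geq 3t\}$ and combine two complementary bounds on $\Gamma^I\widetilde{\Psi}$.

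First I derive an auxiliary pointwise estimate by applying the weighted Sobolev inequality of Proposition \ref{prop:S} with $\eta=0$ to $u=\chi^{1/2}(r-t)\langle r-t\rangle\Gamma^I\widetilde{\Psi}$. Since $\del_r=(x^a/r)\del_a$ has bounded coefficients and $\Omega\in V$, the $\Lambda$-derivatives hitting $\Gamma^I\widetilde{\Psi}$ become ordinary $\Gamma$-derivatives, while those hitting the weight $\chi^{1/2}\langle r-t\rangle$ produce bounded factors. Consequently each $L^2$-norm on the right-hand side is controlled by $\|\chi^{1/2}\langle r-t\rangle\Gamma^{I'}\widetilde{\Psi}\|$ with $|I'|\le |I|+2$, and the weighted $L^2$-bound for $\widetilde{\Psi}$ established in \eqref{eq:est-020} (the lower-order variant, valid for $|I'|\leq N-5$) dominates this by $\eps+(C_1\eps)^2\langle t\rangle^\delta$. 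Taking $|I|\leq N-8$, we obtain the preliminary estimate
\[
\chi^{1/2}(r-t)\,|\Gamma^I\widetilde{\Psi}| \,\lesssim\, \big(\eps+(C_1\eps)^2\big)\,\langle r-t\rangle^{-1}\langle r\rangle^{-1/2}\langle t\rangle^\delta.
\]

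In the far-exterior cone $\{r\geq 3t\}$, one has $\langle r-t\rangle\sim\langle r\rangle\sim\langle t+r\rangle$ and $\langle t\rangle\lesssim\langle t+r\rangle$, so the above estimate alone gives $\chi(r-t)|\Gamma^I\widetilde{\Psi}|\lesssim\langle t+r\rangle^{-3/2+\delta}$, stronger than needed. In the intermediate slab $\{t+1\leq r\leq 3t\}$ the same bound degenerates to $\langle t+r\rangle^{-1/2+\delta}$ where $r-t\sim 1$; to compensate I pair it with Lemma \ref{lem:est-005c}, which provides $|\Gamma^I\widetilde{\Psi}|\lesssim\big(\eps+(C_1\eps)^2\big)\langle t-r\rangle/\langle t+r\rangle^2$ for $|I|\leq N-7$ (valid on this slab because it rests on Proposition \ref{prop:KG-extra}, whose hypothesis $|x|\leq 3t$ is satisfied). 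Bounding $|\Gamma^I\widetilde{\Psi}|^2$ by the product of the two estimates, the $\langle r-t\rangle$ factors cancel exactly, so
\[
|\Gamma^I\widetilde{\Psi}|^2 \,\lesssim\, \big(\eps+(C_1\eps)^2\big)^2 \,\frac{\langle t\rangle^\delta}{\langle r\rangle^{1/2}\,\langle t+r\rangle^2}.
\]
Using $\langle r\rangle\sim\langle t+r\rangle$ and $\langle t\rangle\lesssim\langle t+r\rangle$ throughout the slab, this yields $|\Gamma^I\widetilde{\Psi}|\lesssim\big(\eps+(C_1\eps)^2\big)\langle t+r\rangle^{-5/4+\delta/2}$. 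Combining the two subregions and absorbing $(C_1\eps)^2\leq(C_1\eps)^{3/2}$ for $C_1\eps\leq 1$ then yields \eqref{eq:est-500}.

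The main subtle point is that neither the weighted-Sobolev bound nor Lemma \ref{lem:est-005c} alone can deliver the target rate $\langle t+r\rangle^{-5/4+\delta}$: the former carries a bad $\langle r-t\rangle^{-1}$ factor that degenerates near the light cone, while the latter carries a $\langle r-t\rangle$ factor that grows large far from it. These two weaknesses are precisely complementary, and it is their geometric mean that cancels the $\langle r-t\rangle$ dependence and extracts the intrinsic outside-cone decay of the Klein-Gordon field $\widetilde{\Psi}$.
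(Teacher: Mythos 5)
Your argument follows the paper's own proof in its essential ingredients: the weighted Sobolev inequality (Proposition~\ref{prop:S}) applied to the weighted norm from Lemma~\ref{lem:est-020} gives the auxiliary bound $\chi|\Gamma^I\widetilde\Psi|\lesssim \langle r-t\rangle^{-1}\langle r\rangle^{-1/2+\delta}$, and then a geometric mean with the bound $\langle t-r\rangle\langle t+r\rangle^{-2}$ from Lemma~\ref{lem:est-005c} cancels the $\langle t-r\rangle$ dependence; the paper does not split into $\{r\le 3t\}$ and $\{r\ge 3t\}$ but applies the geometric mean uniformly (both organizations work). One small technicality to fix: you put $\chi^{1/2}(r-t)$ rather than $\chi(r-t)$ inside the Sobolev inequality, but $\del_r\chi^{1/2}=\chi'/(2\chi^{1/2})$ need not be bounded for a general smooth cutoff, so use $\chi(r-t)$ instead (as the paper does); derivatives landing on $\chi$ give $\chi'$, which is compactly supported in $\{1\le r-t\le 2\}$ where $\langle t-r\rangle\sim 1$, and these terms are then dominated by the plain energy $\|\Gamma^{I'}\widetilde\Psi\|\lesssim C_1\eps\,\langle t\rangle^\delta$, while in the final step one should write $(\chi|\Gamma^I\widetilde\Psi|)^2\le \chi^{1/2}|\Gamma^I\widetilde\Psi|\cdot|\Gamma^I\widetilde\Psi|$ rather than a product bounding $|\Gamma^I\widetilde\Psi|^2$ outright.
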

\begin{proof}
We will rely on the weighted energy estimates in Lemma \ref{lem:est-020} and the weighted Sobolev inequality in Proposition \ref{prop:S} to derive the pointwise estimates in \eqref{eq:est-500}.

Applying the weighted Sobolev inequality in Proposition \ref{prop:S} implies
$$
\chi(r-t) \langle t-r \rangle \langle r\rangle^{1/2} \big|\Gamma^I \widetilde{\Psi} \big|
\lesssim
\sum_{ |J| \leq 3} \big\| \Omega^J \big( \chi(r-t) \langle t-r \rangle \Gamma^I \widetilde{\Psi} \big) \big\|.
$$
We note within the support of $\chi'(r-t)$ it holds that $1\lesssim \langle t-r \rangle \lesssim 1$, and by the commutator estimates $\Omega_{ab} r = \Omega_{ab} t = 0$ we find
$$
\sum_{ |J| \leq 3} \big\| \Omega^J \big( \chi(r-t) \langle t-r \rangle \Gamma^I \widetilde{\Psi} \big) \big\|
\lesssim
\sum_{ |I_1| \leq N-5} \big\|  \chi(r-t) \langle t-r \rangle \Gamma^{I_1} \widetilde{\Psi} \big\|.
$$
Then by Lemma \ref{lem:est-020}, we get
$$
\sum_{ |J| \leq 3} \big\| \Omega^J \big( \chi(r-t) \langle t-r \rangle \Gamma^I \widetilde{\Psi} \big) \big\|
\lesssim
\eps + (C_1 \eps)^2 \langle t\rangle^\delta,
$$
and hence
$$
\chi(r-t)  \big|\Gamma^I \widetilde{\Psi} \big|
\lesssim
\big(\eps + (C_1 \eps)^2  \big) \langle t-r \rangle^{-1} \langle r\rangle^{-1/2+\delta},
\qquad
|I| \leq N-8.
$$

Finally, by the aid of Lemma \ref{lem:est-005c}, we are led to
$$
\aligned
&\chi(r-t)  \big|\Gamma^I \widetilde{\Psi} \big|
\\
\lesssim
&\chi^{1/2}(r-t)  \big|\Gamma^I \widetilde{\Psi} \big|^{1/2} \big|\Gamma^I \widetilde{\Psi} \big|^{1/2}
\\
\lesssim
&\big( \eps+ (C_1 \eps)^2 \big) \langle t-r \rangle^{-1/2} \langle t+r\rangle^{-1/4+\delta/2} \langle t-r\rangle^{1/2} \langle t+r\rangle^{-1}
\\
\lesssim
&\big( \eps+ (C_1 \eps)^2 \big) \langle t+r\rangle^{-5/4+\delta},
\qquad
|I| \leq N-8.
\endaligned
$$

The proof is done.
\end{proof}

%
%
%

%
%
%

%
%
%

\begin{proof}[Proof of Proposition \ref{prop:mapping1}]
Gathering the results obtained in Lemmas \ref{lem:est-001}--\ref{lem:est-800}, we get
$$
\big\| \big(\widetilde{\Psi}, \widetilde{\phi} \big) \big\|_X
\leq C \eps + C (C_1 \eps)^2.
$$
By choosing large $C_1 \gg 1$, and sufficiently small $\eps \ll 1$, such that $C_1 \eps \ll 1$, we arrive at
$$
\big\| \big(\widetilde{\Psi}, \widetilde{\phi} \big) \big\|_X
\leq {1\over 2} C_1 \eps,
$$
which means 
$$
\big(\widetilde{\Psi}, \widetilde{\phi} \big) \in X.
$$

In almost the same way, we can show \eqref{eq:contraction2} (we might further shrink the size of $\eps$). Hence the proof is complete.
\end{proof}

\begin{proof}[Proof of Theorem \ref{thm:main1}]
The Banach fixed point theorem together with Proposition \ref{prop:mapping1} leads us to Theorem \ref{thm:main1}.
\end{proof}


\section{Scattering}\label{sec:scatter}

In this Section, we briefly discuss about the scattering of the Klein-Gordon-Zakharov system \eqref{eq:model-KGZ} in $\RR^{1+2}$. We show that the Klein-Gordon field $E$ scatters to a linear Klein-Gordon equation in its high-order energy space (i.e., $\|E \|_{H^{N-7}} + \|\del_t E\|_{H^{N-8}}$), but whether the wave field $n$ scatters (linearly or nonlinearly) is unknown. We also note that this is different from the scattering result obtained in \cite{Guo-N-W} for Klein-Gordon-Zakharov equations in $\RR^{1+3}$, where the initial data are assumed to lie in the low regularity space and different difficulties arise.

We need one key fundamental result from \cite{Katayama17} (Lemma 6.12 there), which originally provides a sufficient condition for the linear scattering of wave equations, but it extends to Klein-Gordon cases with similar proof. We now give the statement of the fundamental result and its proof can be found in either \cite{Katayama17} or Appendix \ref{sec:appendix}.

\begin{lemma}\label{lem:scatter}
Consider the Klein-Gordon equation 
$$
\aligned
&-\Box u +  u = Q(t, x),
\\
&\big( u, \del_t u \big)(t_0=0) = (u_0, u_1).
\endaligned
$$
If it holds (with $N_1 \geq 1$ an integer)
\bel{eq:scatter-condition}
\int_0^{+\infty} \| Q(\tau, \cdot) \|_{H^{N_1}} \, d\tau
< +\infty,
\ee
then there exist $(u_0^+, u_1^+) \in H^{N_1+1} \times H^{N_1}$ and a free Klein-Gordon component $u^+$ satisfying
$$
\aligned
&-\Box u^+ + u^+ = 0,
\\
&\big( u^+, \del_t u^+ \big)(t_0=0) = (u_0^+, u_1^+),
\endaligned
$$
such that $u$ scatters to $u^+$, i.e.,
\be 
\| (u-u^+) (t) \|_{H^{N_1+1}} + m \|\del_t ( u-u^+)(t) \|_{H^{N_1}}
\leq
C \int_t^{+\infty} \| Q(\tau, \cdot) \| \, d\tau
\to 0,
\qquad
\text{as } t\to +\infty.
\ee
\end{lemma}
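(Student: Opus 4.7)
My plan is to exploit the fact that the free Klein-Gordon evolution is a one-parameter group of isometries on the product Sobolev space $H^{N_1+1}\times H^{N_1}$. Introduce the evolution operator $\mathcal{U}(t):H^{N_1+1}\times H^{N_1}\to H^{N_1+1}\times H^{N_1}$ sending $(f,g)$ to $(v(t),\partial_t v(t))$, where $v$ solves $-\Box v+v=0$ with $(v,\partial_t v)(0)=(f,g)$. A standard Fourier-side argument shows that $\mathcal{U}(t)$ preserves the norm
$$\|(f,g)\|_{H^{N_1+1}\times H^{N_1}}^2:=\|\langle\nabla\rangle^{N_1+1}f\|^2+\|\langle\nabla\rangle^{N_1}g\|^2,$$
and forms a group: $\mathcal{U}(t)\mathcal{U}(s)=\mathcal{U}(t+s)$. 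Duhamel's principle then expresses the solution to the inhomogeneous equation as
$$\bigl(u(t),\partial_t u(t)\bigr)=\mathcal{U}(t)(u_0,u_1)+\int_0^t\mathcal{U}(t-\tau)\bigl(0,Q(\tau)\bigr)\,d\tau.$$

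The second step is to define the scattering data as the (claimed) limit of $\mathcal{U}(-t)(u(t),\partial_t u(t))$ as $t\to+\infty$. Applying $\mathcal{U}(-t)$ to the Duhamel formula gives
$$\mathcal{U}(-t)\bigl(u(t),\partial_t u(t)\bigr)=(u_0,u_1)+\int_0^t\mathcal{U}(-\tau)\bigl(0,Q(\tau)\bigr)\,d\tau.$$
Since $\mathcal{U}(-\tau)$ is an isometry, the integrand has $H^{N_1+1}\times H^{N_1}$-norm equal to $\|Q(\tau,\cdot)\|_{H^{N_1}}$, which is integrable by the hypothesis \eqref{eq:scatter-condition}. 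Therefore the integral converges absolutely in $H^{N_1+1}\times H^{N_1}$, and we may define
$$(u_0^+,u_1^+):=(u_0,u_1)+\int_0^{+\infty}\mathcal{U}(-\tau)\bigl(0,Q(\tau)\bigr)\,d\tau,\qquad u^+(t):=\text{first component of }\mathcal{U}(t)(u_0^+,u_1^+).$$

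Finally, the scattering rate follows by subtraction. Writing
$$\mathcal{U}(-t)\bigl(u(t)-u^+(t),\partial_t u(t)-\partial_t u^+(t)\bigr)=-\int_t^{+\infty}\mathcal{U}(-\tau)\bigl(0,Q(\tau)\bigr)\,d\tau,$$
and using once more that $\mathcal{U}(-t)$ preserves the product norm, one obtains
$$\|(u-u^+)(t)\|_{H^{N_1+1}}+\|\partial_t(u-u^+)(t)\|_{H^{N_1}}\leq\int_t^{+\infty}\|Q(\tau,\cdot)\|_{H^{N_1}}\,d\tau,$$
which tends to zero by the integrability hypothesis. There is no serious obstacle in this argument; the only care needed is in verifying the group/isometry property of $\mathcal{U}(t)$ on fractional Sobolev spaces (straightforward via the Fourier transform, since the symbols $\cos(t\langle\xi\rangle)$ and $\langle\xi\rangle^{-1}\sin(t\langle\xi\rangle)$ are bounded multipliers of the right homogeneity) and in keeping track of the bookkeeping between the two components under the inclusion $\langle\nabla\rangle^{-1}:H^{N_1}\to H^{N_1+1}$, so that forcing in the velocity slot of strength $\|Q\|_{H^{N_1}}$ produces displacement in the position slot of strength $\|Q\|_{H^{N_1}}$ in the $H^{N_1+1}$ norm. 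Comparing with the stated inequality, one sees that the author's norm on the right-hand side is essentially the same $H^{N_1}$ norm that the Duhamel estimate produces.
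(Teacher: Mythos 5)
Your argument is correct and follows essentially the same route as the paper: both introduce the free Klein-Gordon propagator on $H^{N_1+1}\times H^{N_1}$, note it is a group of isometries, apply Duhamel's formula, define the scattering data as the absolutely convergent integral $(u_0,u_1)+\int_0^\infty \mathcal{U}(-\tau)(0,Q(\tau))\,d\tau$, and conclude by the unitarity estimate $\|(u-u^+)(t)\|_{H^{N_1+1}\times H^{N_1}}\lesssim\int_t^\infty\|Q(\tau)\|_{H^{N_1}}\,d\tau$. The only cosmetic difference is that you justify the isometry and group law on the Fourier side, whereas the paper invokes strongly continuous semi-group theory and the energy identity.
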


\begin{remark}
By Lemma \ref{lem:scatter} and the results obtained in \cite{Dong2101}, we know that the Klein-Gordon-Zakharov equations enjoy linear scattering in $\RR^{1+3}$, which was also shown in \cite{OTT} with high regular initial data. But again we want to emphasize that there are different difficulties arising in obtaining scattering results for data lying in low regularity space as studied in \cite{Guo-N-W} on Klein-Gordon-Zakharov equations.
\end{remark}

\begin{proof}[Proof of Theorem \ref{thm:scatter}]

According to Lemma \ref{lem:scatter}, we only need to verify 
\bel{eq:scatter-proof}
\big\| n E \big\|_{H^{N-8}} 
\lesssim (C_1 \eps)^2 \langle t \rangle^{-5/4}.
\ee
By the definition of the $\| \cdot \|$-norm in \eqref{eq:X-norm}, we find
$$
\aligned
&\big\| n E \big\|_{H^{N-8}} 
\\
\lesssim
&\sum_{|J|\leq N-8} \big\| \Gamma^J (n E) \big\|
\\
\lesssim
&\sum_{|J_1|, |J_2|\leq N-8}  \Big\| {\langle t-r \rangle \over \langle t+r \rangle} \Gamma^{J_1} n \Big\|_{L^\infty}  \Big\| {\langle t+r \rangle \over \langle t-r \rangle} \Gamma^{J_2} E \Big\|
\\
\lesssim
& (C_1 \eps)^2 \langle t \rangle^{-3/2+2\delta}.
\endaligned
$$
By the smallness of $\delta$, we thus arrive at \eqref{eq:scatter-proof}, and hence Theorem \ref{thm:scatter}.

\end{proof}

\appendix
\section{Proof of Lemma \ref{lem:scatter}}\label{sec:appendix}
By the linear theory on wave equations, the free linear Klein-Gordon equation generates a strongly continuous semi-group acting on $H^{N_1+1}\times H^{N_1}$ as follows (with $N_1 \geq 1$ an integer). Let $(u_0,u_1)\in H^{N_1+1}\times H^{N_1}$. Then the Cauchy problem
$$
-\Box u + u = 0,\quad u(0,x) = u_0(x),\quad \del_t u(0,x) = u_1(x)
$$
generates a unique global solution 
$(u(x),\del_tu(x))\in C([0,\infty),H^{N_1+1}\times H^{N_1})\cap C^1([0,\infty),H^{N_1}\times H^{N_1-1})$ (a detailed proof can be found in \cite{Sogge}). This leads to 
$$
\Scal_1(t): (u_0,u_1)\mapsto \big(u(t,\cdot), \del_tu(t,\cdot)\big)\in H^{N_1+1}\times H^{N_1},
$$
with ($T$ below means the transpose of a matrix)
$$
\Scal_1(t) (u_0,u_1) 
=
\Big(e^{t\mathcal{A}_1} 
\begin{pmatrix}
u_0
\\
u_1
\end{pmatrix}
\Big)^T,
\qquad
\mathcal{A}_1
=
\begin{pmatrix}
0& 1
\\
\Delta-1& 0
\end{pmatrix}.
$$
By energy identity, $\Scal_1(t)$ is unitary for all $t\geq 0$. By the invariance under time translation and global uniqueness,
$$
\Scal_1(t+s) = \Scal_1(t)\circ\Scal_1(s).
$$
By the fact that  $(u(x),\del_tu(x))\in C([0,\infty),H^{N_1+1}\times H^{N_1})$,   
$$
\Scal_1(t):(u_0,u_1)\mapsto (u,\del_t u)\quad\text{in}\quad H^{N_1+1}\times H^{N_1}.
$$
That is, $\Scal_1(t)$ is a strongly continuous semi-group.  Next, we consider a non-homogeneous case.
\be 
\aligned
&-\Box u +  u = Q(t, x),
\\
&\big( u, \del_t u \big)(t_0=0) = (u_0, u_1).
\endaligned
\ee
where $Q(t,\cdot)$ is supposed to be in $L^1([0,\infty),H^{N_1})$. This equation has a unique global solution in $ C([0,\infty),H^{N_1+1}\times H^{N_1})\cap C^1([0,\infty),H^{N_1}\times H^{N_1-1})$ (see also a detailed proof in \cite{Sogge}).
By Duhamel's principle (which is guaranteed by the strong continuity of $\Scal_1(t)$), the associated global solution  can be written as 
$$
(u,\del_tu)(t) = \Scal_1(t)(u_0,u_1) + \int_0^t\Scal_1(t-\tau)(0,Q(\tau))d\tau.
$$
Inspired by this formula, we set the initial data $(u_0^+, u_1^+)$ to be
\be 
(u_0^+, u_1^+)
=
(u_0, u_1)
+
\int_0^{+\infty} \mathcal{S}_1(-\tau) (0, Q(\tau, x)) \, d\tau,
\ee
which is well-defined in $H^{N_1+1}\times H^{N_1}$ as long as
$$
\int_0^{+\infty} \| Q(\tau, x) \|_{H^{N_1}} \, d\tau
<+\infty.
$$
Then we observe that
\be 
\aligned
&\big\| (u,\del_t u) - \mathcal{S}_1(t) (u_0^+, u_1^+) \big\|_{H^{N_1+1}\times H^{N_1}}
\\
=
&\Big\| \int_t^{+\infty} \mathcal{S}_1(t-\tau) (0, Q(\tau, x)) \, d\tau \Big\|_{H^{N_1+1}\times H^{N_1}}
\\
\lesssim
&\int_t^{+\infty} \big\| Q(\tau, x) \big\|_{H^{N_1}} \, d\tau,
\endaligned
\ee
which finishes the proof of Lemma \ref{lem:scatter}.


\section*{Acknowledgements} 

The authors are grateful to Prof. Zihua Guo (Monash University) for leading them to study the scattering aspect of the Klein-Gordon-Zakharov equations. The authors would also like to thank Dr. Kuijie Li (Nankai University) and Dr. Zoe Wyatt (Cambridge University), for many helpful discussions. 





{\footnotesize

\end{document}